
\documentclass{amsart}

\usepackage{amsmath,amssymb,amsthm,amscd}

\usepackage{color}

\usepackage[nohug,heads=LaTeX]{diagrams}

\usepackage{pinlabel}

\hyphenation{mani-fold mani-folds sub-mani-fold sub-mani-folds topo-logy
Topo-logy geo-metry Geo-metry ana-lo-gous ana-lo-gous-ly An-omaly}

\newtheorem{prop}{Proposition}[section]
\newtheorem{thm}[prop]{Theorem}
\newtheorem{lem}[prop]{Lemma}
\newtheorem{cor}[prop]{Corollary}

\theoremstyle{definition}

\newtheorem*{defn}{Definition}
\newtheorem{ex}[prop]{Example}
\newtheorem{rem}[prop]{Remark}

\newtheorem*{ack}{Acknowledgements}


\def\co{\colon\thinspace}

\newcommand{\C}{\mathbb{C}}
\newcommand{\CP}{\mathbb{C}\mathrm{P}}

\newcommand{\rmd}{\mathrm{d}}

\newcommand{\rme}{\mathrm{e}}

\newcommand{\End}{\mathrm{End}}

\newcommand{\F}{\mathbb{F}}

\newcommand{\Ham}{\mathrm{Ham}}

\newcommand{\rmi}{\mathrm{i}}

\newcommand{\MM}{\mathcal{M}}
\newcommand{\wtM}{\widetilde{M}}

\newcommand{\N}{\mathbb{N}}
\newcommand{\NN}{\mathcal{N}}

\newcommand{\Open}{\mathrm{Open}}

\newcommand{\bfp}{\mathbf{p}}
\newcommand{\mfp}{\mathfrak{p}}

\newcommand{\bfq}{\mathbf{q}}

\newcommand{\R}{\mathbb{R}}

\newcommand{\RP}{\mathbb{R}\mathrm{P}}

\newcommand{\wtW}{\widetilde{W}}

\newcommand{\xist}{\xi_{\mathrm{st}}}
\newcommand{\wtX}{\widetilde{X}}

\newcommand{\bfz}{\mathbf{z}}

\newcommand{\Z}{\mathbb{Z}}

\DeclareMathOperator{\Diff}{Diff}

\DeclareMathOperator{\ev}{ev}

\DeclareMathOperator{\Int}{Int}

\DeclareMathOperator{\Symp}{Symp}

\DeclareMathOperator{\Wh}{Wh}


\begin{document}

\author[K.~Barth]{Kilian Barth}
\address{Mathematisches Institut, WWU M\"unster,
Einstein\-stra\-{\ss}e 62, 48149 M\"unster, Germany}
\email{k{\_}bart05@uni-muenster.de}

\author[H.~Geiges]{Hansj\"org Geiges}
\address{Mathematisches Institut, Universit\"at zu K\"oln,
Weyertal 86--90, 50931 K\"oln, Germany}
\email{geiges@math.uni-koeln.de}

\author[K.~Zehmisch]{Kai Zehmisch}
\address{Mathematisches Institut, WWU M\"unster,
Einstein\-stra\-{\ss}e 62, 48149 M\"unster, Germany}
\email{kai.zehmisch@uni-muenster.de}

\title{The diffeomorphism type of symplectic fillings}

\date{}

\begin{abstract}
We show that simply connected contact manifolds that are subcritically
Stein fillable have a unique symplectically aspherical
filling up to diffeomorphism. Various extensions to
manifolds with non-trivial fundamental group are discussed.
The proof rests on homological restrictions on symplectic
fillings derived from a degree-theoretic analysis of the evaluation
map on a suitable moduli space of holomorphic spheres. Applications
of this homological result include a proof that compositions of right-handed
Dehn twists on Liouville domains are of infinite order in the
symplectomorphism group. We also derive uniqueness results
for subcritical Stein fillings up to homotopy equivalence and,
under some topological assumptions on the contact manifold,
up to diffeomorphism or symplectomorphism.
\end{abstract}

\subjclass[2010]{57R17; 32Q65, 53D35, 57R65, 57R80}

\thanks{H.~G.\ and K.~Z.\ are partially supported by DFG grants
GE 1245/2-1 and ZE 992/1-1, respectively.}

\maketitle


\section{Introduction}
The aim of this paper is to study the topology of symplectic fillings
$(W,\omega)$ of a given contact manifold $(M,\xi)$. By
`symplectic filling' we always mean a \emph{strong}
filling~\cite[Definition~5.1.1]{geig08}.

Any filling can of course be modified by performing
symplectic blow-ups; this is ruled out if we require the
filling $(W,\omega)$ to be \emph{symplectically aspherical}, that is,
$[\omega]|_{\pi_2(W)}=0$.

It is well known that even under this asphericity assumption
symplectic fillings are not, in general,
unique up to diffeomorphism. For instance, McDuff~\cite{mcdu90}
observed that the lens space
$L(4,1)$ with its standard contact structure coming from
the $3$-sphere
is Stein fillable both by the disc bundle
over the $2$-sphere $S^2$ with Euler class~$-4$, and by the
complement of the quadric in $\CP^2$, cf.~\cite[Exercises 12.3.4]{ozst04}.
Many more examples of lens spaces with non-unique fillings
have been found by Lisca~\cite{lisc04}.

On the other hand, there are also contact manifolds whose
symplectic fillings are determined up to diffeomorphism, or even
symplectomorphism.
The first result about the diffeomorphism type of fillings
(in all dimensions) is due to Eliashberg--Floer--McDuff,
see \cite[Theorem~1.5]{mcdu91}.
Here $\xist$ denotes the standard contact structure on
the odd-dimensional standard sphere coming
from the obvious filling by the standard symplectic ball.

\begin{thm}[Eliashberg--Floer--McDuff]
\label{thm:EFM}
Let $(W,\omega)$ be a symplectically aspherical filling of
$(S^{2n-1},\xist)$, $n\geq 3$. Then $W$ is diffeomorphic to the ball~$D^{2n}$.
\end{thm}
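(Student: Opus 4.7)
The plan is to embed $W$ into a closed symplectic manifold by capping off with a standard piece of $\CP^n$. Concretely, identify a neighbourhood of $\partial W$ with a neighbourhood of $\partial B$, where $B\subset\CP^n$ is a Darboux ball for the Fubini--Study form, and glue to produce a closed symplectic manifold $(V,\omega_V)$ containing the hyperplane at infinity $L\cong\CP^{n-1}$. Choose an $\omega_V$-tame almost complex structure $J$ on $V$ that agrees with the Fubini--Study structure in a neighbourhood of $L$, so that $L$ is $J$-holomorphic. The key object of study is the Gromov moduli space $\MM=\MM(V,A,J)$ of unparametrized $J$-holomorphic spheres in the line class $A=[\CP^1]\in H_2(V;\Z)$.

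The central step is to prove that $\MM$ is a compact smooth manifold of the expected dimension $4n-4$. Mayer--Vietoris for $V=W\cup(\CP^n\setminus B)$ (using $H_1(S^{2n-1})=H_2(S^{2n-1})=0$, valid for $n\geq 2$) gives $H_2(V;\Z)\cong H_2(W;\Z)\oplus\Z\langle A\rangle$. Together with the asphericity $[\omega]|_{\pi_2(W)}=0$, this forces the $\omega_V$-area of every spherical class in $V$ to be a non-negative integer multiple of $\omega_V(A)$. A bubble decomposition $A=A_1+A_2$ with $\omega_V(A_i)>0$ is therefore precluded, since it would demand $\omega_V(A_1)+\omega_V(A_2)\geq 2\,\omega_V(A)$, contradicting $\omega_V(A_1+A_2)=\omega_V(A)$. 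Transversality together with $c_1(A)=n+1$ then provides the required smooth structure.

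Fix $q\in L$ and pass to the sub-moduli $\MM_q$ of spheres through $q$. Positivity of intersection with the $J$-holomorphic divisor $L$ forces each sphere in $\MM_q$ to meet $L$ transversely at $q$ alone, and the universal $S^2$-bundle $\widetilde\MM_q\to\MM_q$ carries a tautological evaluation $\ev\co \widetilde\MM_q\to V$. In the Fubini--Study model (i.e.\ when $W=B$, so $V=\CP^n$), $\MM_q$ equals the space of lines through $q$, and $\widetilde\MM_q\to V$ is identified with the blow-down of the blow-up of $\CP^n$ at $q$; a parametric deformation of $J$ within the admissible class then yields both $\MM_q\cong\CP^{n-1}$ and $\deg\ev=1$ in general. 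A comparison of $H^*(\widetilde\MM_q;\Z)$ (computed from the $S^2$-bundle) with the cohomology of the Fubini--Study model, together with the degree-one $\ev$, delivers $H^*(V;\Z)\cong H^*(\CP^n;\Z)$; moreover, $\widetilde\MM_q$ is simply connected (being a sphere bundle over $\CP^{n-1}$), so the degree-one evaluation forces $\pi_1(V)=1$.

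A Mayer--Vietoris calculation for $V=W\cup(\CP^n\setminus B)$, using that $\CP^n\setminus B$ deformation retracts onto $\CP^{n-1}$, now forces $\widetilde H_*(W;\Z)=0$; similarly van Kampen yields $\pi_1(W)=1$ from $\pi_1(V)=\pi_1(\CP^n\setminus B)=1$. Therefore $W$ is a contractible compact $2n$-manifold with simply-connected boundary $S^{2n-1}$, and the $h$-cobordism theorem, applicable because $2n\geq 6$ when $n\geq 3$, delivers $W\cong D^{2n}$. The main obstacle is the moduli-space analysis: establishing Gromov compactness of $\MM$ in the absence of any monotonicity or semi-positivity hypothesis on $V$, and carrying through the parametric deformation that identifies $\MM_q\cong\CP^{n-1}$ and $\deg\ev=1$, both rely crucially on the asphericity hypothesis.
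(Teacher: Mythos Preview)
Your outline is the classical Eliashberg--Floer--McDuff argument (cap with $\CP^n\setminus B$ and study lines), and it does lead to the theorem, but one step is not justified as written. You invoke a ``parametric deformation of $J$'' from the Fubini--Study model ($W=B$, $V=\CP^n$) to deduce $\MM_q\cong\CP^{n-1}$ and $\deg\ev=1$ for the general~$V$. This cannot be carried out: the capped manifold $V$ depends on~$W$, and you do not know \emph{a priori} that $V\cong\CP^n$, so there is no fixed ambient manifold on which to interpolate $J$ to the integrable case. Cobordism invariance of degree requires a path of $J$'s on one manifold. The degree must instead be extracted by a direct count at a well-chosen regular value, and enough of the topology of $\MM_q$ to run your cohomology comparison needs a separate argument; both are in McDuff's original paper but are more delicate than the sentence you wrote.

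The paper takes a genuinely different compactification that sidesteps this. It views $D^{2n}$ as a sublevel set in the split Stein manifold $\C^{n-1}\times\C$ (the $n{=}\,\text{ball}$ case of Cieliebak's splitting theorem), replaces it by~$W$, and compactifies only the second factor to~$\CP^1$. The resulting $\hat Z$ is non-compact, with ends foliated by the standard spheres $\{v\}\times\CP^1$; a maximum-principle lemma shows these are the \emph{only} holomorphic spheres meeting the ends. Hence $\deg\ev=1$ is read off at any point out there, with no appeal to a model case. The induced surjection $H_k(\C^{n-1})\twoheadrightarrow H_k(W)$ then forces $W$ to be an integral homology ball, and $\pi_1$-surjectivity of $M\hookrightarrow W$ (from the same degree-one map) gives $\pi_1(W)=1$; your final $h$-cobordism step is identical. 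The advantage of the paper's set-up, beyond making the degree immediate, is that it generalises without change to any subcritically Stein fillable $(M,\xi)$, which is the paper's real goal; your $\CP^n$-cap is particular to the sphere.
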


Earlier, it had been proved by Gromov~\cite[p.~311]{grom85}
and McDuff~\cite[Theorem~1.7]{mcdu90}, using positivity
of intersection in dimension four,
that any symplectically aspherical
filling of $(S^3,\xist)$ is even symplectomorphic to the 
standard $4$-ball.

In this paper, we study the topology of symplectically
aspherical fillings of contact manifolds that
admit a subcritical Stein filling, that is, where the
plurisubharmonic function on the Stein filling has handles 
of index below the middle dimension only.
Essentially, what we show (under various topological
assumptions) is that the existence of a single subcritical
Stein filling fixes the diffeomorphism type of all
symplectically aspherical fillings.

By analysing the moduli space of holomorphic spheres in
a partial compactification of the filling, we derive a degree-theoretic
statement concerning the evaluation map on this moduli space.
This approach was pioneered by McDuff in \cite{mcdu91},
and developed further by two of the present authors in a number
of papers, e.g.\ \cite{geze12,geze16a,geze16b}.
From the evaluation map on the moduli space
we derive a homological vanishing result for fillings, which then
leads to the following result. Here, by slight abuse of notation,
we write the Stein filling as a pair $(W_0,\omega_0)$ consisting
of a manifold and a symplectic form, since we are primarily interested
in the symplectic properties of fillings.

\begin{thm}
\label{thm:homology}
Let $(M,\xi)$ be a $(2n-1)$-dimensional closed, connected
contact manifold, $n\geq 2$,
admitting a subcritical Stein filling $(W_0,\omega_0)$ with
the homotopy type of a $CW$ complex of dimension $\ell_0\leq n-1$. Let
$(W,\omega)$ be any symplectically aspherical filling of~$(M,\xi)$. Then
the following holds:
\begin{itemize}
\item[(a)]
The integral homology groups of $W$ are
\[ H_k(W)\cong\begin{cases}
H_k(M) & \text{for $k=0,\ldots,\ell_0$},\\
0      & \text{otherwise,}
\end{cases} \]
where the isomorphism between the relevant homology groups of $M$ and
$W$ is induced by the inclusion $M\subset W$.
In particular, the homology groups of $W$ coincide with those
of~$W_0$.

\item[(b)] The inclusion $M\subset W$ is $\pi_1$-surjective. If the
fundamental group $\pi_1(M)$ is abelian, then the inclusion $M\subset W$
is also $\pi_1$-injective.
\end{itemize}
\end{thm}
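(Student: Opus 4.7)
The plan is to follow the evaluation-map strategy pioneered by McDuff. Using Cieliebak's splitting theorem I first put the subcritical Stein filling~$W_0$ in the form $V\times\C$ up to Stein deformation, which allows me to compactify along the $\C$-factor to a cap~$C$ whose core is a copy of~$V$ and whose fibres are $\CP^1$s meeting~$M$ along standard contact-type spheres. Gluing yields a closed symplectic manifold $\wtX=W\cup_M C$ equipped with a tame almost complex structure~$J$ that is standard on~$C$, so the fibre class supports a moduli space~$\MM$ of $J$-holomorphic spheres. The heart of the argument is to show that $\MM$ is compact of the expected dimension: asphericity of~$\omega$ on $W$ rules out bubbles there, since any non-constant $J$-holomorphic sphere in $W$ would have positive $\omega$-area while representing a class in~$\pi_2(W)$; bubbling in $C$ is excluded by a minimality argument for the energy of the fibre class. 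A one-point evaluation map $\ev$ from the compactified moduli space to $\wtX$ is then shown to have degree~$\pm 1$ by comparison with the standard family in the cap.

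For part~(a), the degree-one property implies that every point of $W$ lies on a holomorphic sphere, so $W$ is foliated, away from a set of codimension at least two, by discs with boundary on~$M$ obtained by intersecting these spheres with~$W$. This structural information translates into a vanishing theorem for the relative homology $H_*(W,M)$ in the range $*>\ell_0$: relative cycles can be pushed across the discs into the cap, whose topology is controlled by~$V$ and matches that of~$W_0$. The long exact sequence of the pair $(W,M)$ then produces the inclusion-induced isomorphism $H_k(M)\cong H_k(W)$ for $k\leq\ell_0$ together with the vanishing $H_k(W)=0$ for $k>\ell_0$, matching $H_*(W_0)$. The subtle point is to certify that the isomorphism really is induced by the inclusion rather than merely abstract, which is where the disc-foliation picture is essential: cycles in $W$ are concretely homotoped into $M$ along disc boundaries.

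For part~(b), surjectivity $\pi_1(M)\twoheadrightarrow\pi_1(W)$ follows from the same disc-foliation picture: any loop in~$W$, placed in general position with respect to the discs, can be homotoped into~$M$ by sliding across them. For injectivity when $\pi_1(M)$ is abelian, I pass to the universal cover $\wtW\to W$; by surjectivity, $\partial\wtW$ is the connected covering of~$M$ with fundamental group $K=\ker\bigl(\pi_1(M)\to\pi_1(W)\bigr)$, which is abelian as a subgroup of~$\pi_1(M)$. The symplectic structure, the subcritical end, and the cap construction all lift to~$\wtW$, yielding a symplectically aspherical, possibly non-compact, filling to which the moduli-space argument can be re-run. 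The main obstacle here is to confine the lifted spheres to a compact region of the cover, which should follow from their bounded $\omega$-energy together with the requirement that each meets a fixed lift of the divisor at infinity. Part~(a) applied to the cover then yields $H_1(\partial\wtW)\cong H_1(\wtW)=0$, forcing the abelian group~$K$ to be trivial.
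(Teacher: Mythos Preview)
Your overall strategy---Cieliebak splitting, compactification along the $\C$-factor, a moduli space of spheres in the fibre class, and a degree-one evaluation map---matches the paper's. But several of the deductions you draw from this setup are either imprecise or incorrect.

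First, the glued manifold $\wtX=W\cup_M C$ is \emph{not} closed and the moduli space $\MM$ is \emph{not} compact: the Stein factor $V$ is non-compact, and the fibres $\{v\}\times\CP^1$ for $v$ far out in $V$ all lie in~$\MM$. What one actually proves is that the evaluation map $\ev\co\MM\times\CP^1\to\wtX$ is \emph{proper} of degree one; compactness of $\MM$ is neither true nor needed.

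Second, and more seriously, the passage from ``$\ev$ has degree one'' to a ``disc foliation of $W$'' and then to ``$H_k(W,M)=0$ for $k>\ell_0$'' does not work. Degree one does not produce a foliation (multiple spheres may pass through a point, and their intersection with $W$ need not be a single disc), and the relative vanishing you claim is simply false: $H_{2n}(W,M)\cong\Z$ by the fundamental class. The paper's mechanism is purely algebraic: a proper degree-one map induces a \emph{surjection} on homology (the shriek map is a right inverse), and via the marked-point condition this surjection factors through $H_*(V)$. Since $V$ has the homotopy type of an $\ell_0$-complex, one gets $H_k(W)=0$ for $k>\ell_0$ \emph{directly}. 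Only then does Poincar\'e--Lefschetz duality yield the relative vanishing, and in the \emph{low} range $H_k(W,M)=0$ for $k\le 2n-1-\ell_0$; the long exact sequence of $(W,M)$ then gives the inclusion-induced isomorphism for $k\le\ell_0$. Your geometric ``push cycles across discs into the cap'' picture, besides being vague, would at best show that cycles of $W$ become null in $\wtX$, which does not imply they are null in~$W$.

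Third, for part~(b) your $\pi_1$-injectivity argument via the universal cover is a substantial detour with a genuine technical obstacle (properness of the lifted evaluation map) that you yourself flag. The paper's argument is one line: since $\pi_1(M)$ is abelian and surjects onto $\pi_1(W)$, the latter is abelian too, so both groups coincide with their respective $H_1$, and part~(a) already says $H_1(M)\to H_1(W)$ is an isomorphism. The covering machinery you sketch is developed later in the paper for other purposes, but is unnecessary here.
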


This theorem will be proved in Section~\ref{section:proof-homology}.
Various direct applications are discussed in
Section~\ref{section:applications}.

\begin{rem}
In the proof of Theorem~\ref{thm:homology}, we appeal to a result
of Cieliebak, which relies on
the assumption that the filling is subcritical as a Stein manifold,
i.e.\ there are no Stein handles of critical index. It is probably not
sufficient, in general, merely to assume that the
Stein filling is of subcritical homotopical dimension (that is,
where the critical handles cancel topologically). Stein manifolds
of this kind exist by the work of Seidel--Smith~\cite{sesm05}
and McLean~\cite{mcle09}: in all even dimensions $2n\geq 8$
there are infinitely many distinct finite type Stein manifolds
diffeomorphic to~$\R^{2n}$; they all have Stein handle
decompositions involving critical handles.
\end{rem}

Our most significant application of Theorem~\ref{thm:homology}
is the following vast extension of results of Seidel
on generalised Dehn twists. This is discussed
in Section~\ref{section:DS}, where all the relevant concepts
will be introduced. For other work in this direction see
Remark~\ref{rem:DS}.

\begin{thm}
\label{thm:DS}
A (non-empty) composition of right-handed Dehn twists on a Liouville
manifold of dimension at least four is never isotopic to the
identity within the group of compactly supported
symplectomorphisms.
\end{thm}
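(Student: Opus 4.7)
The plan is to argue by contradiction: assume a non-empty composition of right-handed Dehn twists is isotopic to the identity through compactly supported symplectomorphisms, and then apply Theorem~\ref{thm:homology} to two natural symplectic fillings of a common contact manifold associated to the composition. Let $F$ denote the Liouville manifold, of real dimension $2n$ with $n\geq 2$, and write $\phi=\tau_{L_k}\circ\cdots\circ\tau_{L_1}$, where $L_1,\ldots,L_k\subset F$ are parametrised Lagrangian spheres and $k\geq 1$. First I would form the abstract contact open book $M_\phi:=\mathrm{OB}(F,\phi)$, a closed contact manifold of dimension $2n+1$. By isotopy invariance of the abstract open book construction under compactly supported isotopies of the monodromy, the hypothesised isotopy $\phi\simeq\mathrm{id}$ provides a contactomorphism $M_\phi\cong M_0:=\mathrm{OB}(F,\mathrm{id})$, and the latter is naturally the contact boundary (after smoothing corners) of $W_0:=F\times D^2$. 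Observe that $W_0$ is a subcritical Stein filling of $M_0$, since all its Stein handles come from those of $F$ and hence have index at most $n$, while the real dimension of $W_0$ is $2(n+1)$; in addition $W_0\simeq F$ has the homotopy type of a CW complex of dimension $\ell_0\leq n$, fitting the hypotheses of Theorem~\ref{thm:homology}.

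Next I would exhibit the second filling. The standard handle-attachment realisation of Dehn twists in open books (due to van~Koert and others) produces a Weinstein manifold $W_\phi$ from $F\times D^2$ by attaching $k$ Weinstein $(n+1)$-handles along Legendrian push-offs of $L_1,\ldots,L_k$, placed in $k$ disjoint pages; its contact boundary is $M_\phi$. Since $W_\phi$ is Weinstein it is exact, hence symplectically aspherical, so via the identification $M_\phi\cong M_0$ it qualifies as a symplectically aspherical filling of~$M_0$. Theorem~\ref{thm:homology} then yields $H_*(W_\phi)\cong H_*(W_0)=H_*(F)$; in particular $H_{n+1}(W_\phi)=0$ and $H_n(W_\phi)\cong H_n(F)$. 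On the other hand, the long exact sequence of the pair $(W_\phi,W_0)$, together with the fact that the only cells attached have index $n+1$ and that $H_{n+1}(F)=0$, reduces to
\[
0\longrightarrow H_{n+1}(W_\phi)\longrightarrow \Z^k\xrightarrow{\partial} H_n(F)\longrightarrow H_n(W_\phi)\longrightarrow 0,
\]
where $\partial$ sends the $i$-th handle generator to $[L_i]\in H_n(F)$. Vanishing of $H_{n+1}(W_\phi)$ forces $\partial$ to be injective, so its image is free of rank $k$; exactness then gives $\mathrm{rank}\,H_n(W_\phi)=\mathrm{rank}\,H_n(F)-k$. Comparing with $H_n(W_\phi)\cong H_n(F)$, which is finitely generated because $F$ has finite CW type, this forces $k=0$, contradicting the non-emptiness of the composition.

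The homological bookkeeping at the end is routine once Theorem~\ref{thm:homology} is in hand; the main obstacle is the geometric setup. Specifically, one must check that a compactly supported isotopy $\phi\simeq\mathrm{id}$ really does produce a contactomorphism $M_\phi\cong M_0$, and that the Weinstein handle-attachment picture genuinely realises $M_\phi$ as $\partial W_\phi$ and the Dehn twists as the resulting monodromy. Both are classical open book techniques but need to be invoked carefully. A subsidiary technicality, easily handled, is that if $F$ is presented as an open (completed) Liouville manifold, one should first restrict attention to a Liouville domain containing the supports of $\phi$ and of all of the spheres $L_i$, so that the open book and handle constructions apply verbatim.
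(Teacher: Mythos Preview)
Your overall strategy coincides with the paper's, but there is a genuine gap: you assume that the Liouville manifold $F$ has Stein handles and, in particular, the homotopy type of a CW complex of dimension $\ell_0\le n$. Neither is part of the hypothesis. A Liouville domain need not be Weinstein, and even when it is, the theorem as stated does not assume this. Consequently you cannot invoke Theorem~\ref{thm:homology}; the correct tool here is Theorem~\ref{thm:homology-Liouville}, and that theorem only yields the full homology identification $H_*(W_\phi)\cong H_*(F)$ under the extra condition $\ell_0\le n$. In general it gives only an epimorphism $H_k(F)\twoheadrightarrow H_k(W_\phi)$. Feeding this weaker information into your long exact sequence, you no longer get $H_{n+1}(W_\phi)=0$ nor $H_n(W_\phi)\cong H_n(F)$; after tensoring with $\mathbb{Q}$ the best you can extract is
\[
k \;\le\; b_{n+1}(W_\phi) + b_n(F) - b_n(W_\phi) \;\le\; b_{n+1}(F) + b_n(F),
\]
which does not force $k=0$.

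The paper's remedy is exactly to replace $\phi$ by $\phi^N$: the same construction attaches $kN$ handles, the same Betti-number estimate gives $kN\le b_{n+1}(F)+b_n(F)$ for every $N$, and this is the desired contradiction. So your argument is essentially the $N=1$ case, which suffices only when $F$ happens to be Weinstein. A smaller point you flag but should not underestimate: the passage from a compactly supported \emph{symplectic} isotopy $\phi\simeq\mathrm{id}$ to a contactomorphism of open books is not automatic; the paper needs Lemma~\ref{lem:Dehn-exact} (exactness of Dehn twists) and Proposition~\ref{prop:open} (which reduces to the Hamiltonian case via the flux exact sequence) to justify it.
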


Theorem~\ref{thm:homology} is also one of the essential
steps towards the main result of this paper, proved in
Section~\ref{section:filling}, about the
topological classification of symplectic fillings.

\begin{thm}
\label{thm:filling}
Let $(M,\xi)$ be as in Theorem~\ref{thm:homology}, $n\geq 3$. If $M$
is simply connected, then all symplectically aspherical
fillings of $(M,\xi)$ are diffeomorphic.
\end{thm}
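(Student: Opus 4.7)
The plan is to reduce the problem to the simply connected $h$-cobordism theorem. By Theorem~\ref{thm:homology}(b), since $\pi_1(M)=1$ and the inclusion $M\hookrightarrow W$ is $\pi_1$-surjective, $W$ is simply connected; the same reasoning applied to $W_0$ (which is itself a symplectically aspherical filling) gives $\pi_1(W_0)=1$. By Theorem~\ref{thm:homology}(a), $H_*(W)\cong H_*(W_0)$, with the isomorphism compatible with the inclusions of~$M$ on both sides.

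The central geometric step is to realise $W_0$ as a codimension-zero submanifold of $\mathrm{int}(W)$ whose inclusion induces an isomorphism on homology. Since the subcritical handle decomposition of $W_0$ has handles of index $\leq n-1$, $W_0$ deformation retracts onto a CW skeleton $K_0\subset W_0$ of dimension $\leq n-1$. I would embed $K_0$ into $\mathrm{int}(W)$ cell by cell, using general position (since $2(n-1)<2n$) together with cellular obstruction theory: the simple connectedness of $W$ together with $H_k(W)\cong H_k(W_0)$ for $k\leq n-1$ ensures that at each stage the next attaching sphere can be sent to a null-homotopic map in $W$ of the required homology class, while trivialisations of its normal bundle can be chosen to match the Stein handle data of~$W_0$. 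Thickening $K_0$ by these trivialisations to a regular neighbourhood $N(K_0)\subset\mathrm{int}(W)$ produces an embedded copy of $W_0$ that realises the desired homological isomorphism.

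Given such an embedding, the complement $C:=W\setminus\mathrm{int}(N(K_0))$ is a compact $2n$-dimensional cobordism between the inner boundary $\partial N(K_0)\cong M$ and the outer boundary $\partial W=M$. Excision combined with the homology isomorphism yields $H_*(C,\partial N(K_0))=0$; Poincar\'e--Lefschetz duality for the oriented cobordism $C$ then gives $H^*(C,\partial W)=0$, and since $C$ has finitely generated homology the universal coefficient theorem promotes this to $H_*(C,\partial W)=0$. Van Kampen applied to $W=N(K_0)\cup C$, with simply connected pieces and simply connected intersection~$M$, shows that $C$ is simply connected, so $C$ is a simply connected $h$-cobordism in dimension $2n\geq 6$. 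Smale's $h$-cobordism theorem gives $C\cong M\times[0,1]$, and hence $W\cong N(K_0)\cup_M(M\times[0,1])\cong W_0$.

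The principal obstacle is the embedding step: matching a regular neighbourhood of the embedded skeleton to $W_0$ as a smooth handlebody requires simultaneously controlling the isotopy classes of attaching spheres and the trivialisations of their normal bundles inside~$W$. The subcritical bound on handle indices is essential here, confining all relevant obstructions to strictly below the middle dimension, where only primary obstructions arise; these are killed by the combination of $\pi_1(W)=1$ and the sharp low-degree homological information provided by Theorem~\ref{thm:homology}.
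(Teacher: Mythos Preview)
Your overall strategy---produce an embedded copy of $W_0$ inside $W$ and show the complementary cobordism is a simply connected $h$-cobordism---is exactly the paper's strategy. The divergence, and the gap, is in how the embedding is obtained.

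You propose to embed the skeleton $K_0$ cell by cell via general position and obstruction theory, then argue that a regular neighbourhood $N(K_0)$ is diffeomorphic to~$W_0$. The second claim is where the argument is incomplete. A regular neighbourhood of an embedded $(n-1)$-complex in a $2n$-manifold is a handlebody whose diffeomorphism type depends on the framings of the attaching spheres, not just on the homotopy class of the embedding. Your sentence ``trivialisations of its normal bundle can be chosen to match the Stein handle data of~$W_0$'' asserts exactly what needs to be proved: you would have to show, inductively over the handles, that the attaching sphere of each $k$-handle of $W_0$ can be realised in the boundary of the already-embedded handlebody in the correct \emph{framed} isotopy class, and that the handle then extends into $W$. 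The appeal to ``only primary obstructions'' does not settle this; the relevant obstruction groups involve $\pi_{k-1}(SO(2n-k))$, and nothing in Theorem~\ref{thm:homology} bears on these. Even for a single $2$-handle (so $K_0\simeq S^2$), the normal bundle of the embedded $S^2$ need not be trivial \emph{a priori}.

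The paper avoids this problem entirely by exploiting the ambient split structure already set up in Section~\ref{section:proof-homology}. Since $W_0$ is a sublevel set in $V\times\C$, one enlarges it to a higher sublevel set $W_1$ with $\partial W_1=M_1\cong M$; the collar region $W_1\setminus\Int(W_0)$ then contains a second copy of $W_0$, obtained simply by translation in the $\C$-direction. Replacing the original $W_0$ by $W$ (glued along~$M$) turns $W_1$ into $W$ with a collar attached, and the translated copy of $W_0$ now sits inside~$W_1$ for free---no obstruction theory, no framing control. The cobordism $X=W_1\setminus\Int(W_0)$ is then analysed via the ``cobordism diagram'' (Lemmata~\ref{lem:filling-pi} and~\ref{lem:filling-H}), which makes both the $\pi_1$- and the homological vanishing transparent. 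This geometric trick is the missing idea in your proposal.
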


An extension of this result to certain finite fundamental groups
is given in Theorem~\ref{thm:filling2}.

In Section~\ref{section:coverings} we generalise the
argument used to prove Theorem~\ref{thm:homology} to
the setting of coverings. This is used in Section~\ref{section:handle}
to derive results on the homotopy and diffeomorphism type
of fillings when the maximal index of a handle decomposition
of $W$ is known. One result that is easy to state
is the following.

\begin{thm}
\label{thm:scS-homotopy}
All subcritical Stein fillings of a closed, connected
contact manifold are homotopy equivalent.
\end{thm}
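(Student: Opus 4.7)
The plan is to apply Theorem~\ref{thm:homology} and its covering-space extension from Section~\ref{section:coverings} to both of the given subcritical Stein fillings, and then to build a homotopy equivalence between them by obstruction theory.

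Let $(W_0,\omega_0)$ and $(W_1,\omega_1)$ be two subcritical Stein fillings of $(M,\xi)$. Both are exact and therefore symplectically aspherical, so Theorem~\ref{thm:homology} applies with either filling as the reference subcritical Stein filling and the other as an arbitrary aspherical filling. This yields $H_k(W_0)\cong H_k(W_1)\cong H_k(M)$ for $k\leq n-1$ (and vanishing otherwise), with the isomorphisms induced by the boundary inclusions, and shows that $M\hookrightarrow W_i$ is $\pi_1$-surjective. The upside-down handle decomposition of $W_i$, dual to the Weinstein decomposition (handles of index $\leq n-1$), builds $W_i$ from the collar $M\times I$ by attaching handles of index $\geq n+1\geq 3$; since such handles do not affect the fundamental group, $M\hookrightarrow W_i$ is in fact a $\pi_1$-isomorphism.

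Next I would invoke the covering-space extension of Theorem~\ref{thm:homology} from Section~\ref{section:coverings}, applied to the universal cover $\widetilde{W}_i\to W_i$ and its restriction to the boundary, which is the universal cover $\widetilde{M}\to M$ by the preceding $\pi_1$-isomorphism. Combined with the observation that $\widetilde{W}_i$ is obtained from $\widetilde{M}$ by equivariantly attaching cells of dimension $\geq n+1$, this gives $H_k(\widetilde{W}_i,\widetilde{M})=0$ for $k\leq n$. Applying the relative Hurewicz theorem inductively to the simply connected pair $(\widetilde{W}_i,\widetilde{M})$ then yields $\pi_k(W_i,M)=0$ for $k\leq n$, so the boundary inclusion is $n$-connected.

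Finally, to construct a homotopy equivalence $f\co W_0\to W_1$ extending $M\hookrightarrow W_1$, I would proceed by obstruction theory along the relative CW structure on $(W_0,M)$ coming from the upside-down handle decomposition, with cells in dimensions $n+1,\ldots,2n$. The obstructions live in $\pi_\ast(W_1,M)$; the key point is that, by the matching of relative homology groups of $(W_i,M)$ between $i=0$ and $i=1$, together with relative Hurewicz in the first nontrivial degree and the subsequent inductive steps, the relative homotopy groups and their boundary maps into $\pi_\ast(M)$ coincide for the two fillings, and consequently the attaching spheres of $W_0$'s cells become nullhomotopic in $W_1$. The resulting map $f$ induces an isomorphism on $\pi_1$ and on integer homology of the universal covers, so Whitehead's theorem implies it is a homotopy equivalence. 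The main obstacle is this cell-trading step, which requires making the identification of relative homotopy data sufficiently canonical to kill all obstructions; this is where the full strength of the covering-space extension of Theorem~\ref{thm:homology} is needed.
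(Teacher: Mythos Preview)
Your preliminary steps are sound: the $\pi_1$-isomorphism via the dual handle decomposition, and the $n$-connectedness of $M\hookrightarrow W_i$ via relative Hurewicz on the universal covers, are both correct and essentially reproduce what the paper records in Lemma~\ref{lem:rel-pi-H}. The gap is exactly where you flag it. To extend $M\hookrightarrow W_1$ over the $(n+1)$-cells of $(W_0,M)$ you need every attaching sphere $\alpha\co S^n\to M$ that bounds in $W_0$ also to bound in $W_1$, i.e.\
\[
\ker\bigl(\pi_n(M)\to\pi_n(W_0)\bigr)\subseteq\ker\bigl(\pi_n(M)\to\pi_n(W_1)\bigr).
\]
The covering-space epimorphism does give $H_k(\tilde{W}_i)=0$ for $k\geq n$, and hence that the connecting map $H_{n+1}(\tilde{W}_i,\tilde{M})\to H_n(\tilde{M})$ is an isomorphism for both~$i$. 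But this only shows that the images of the boundary maps $\partial_i\co\pi_{n+1}(\tilde{W}_i,\tilde{M})\to\pi_n(\tilde{M})$ agree \emph{after} applying the Hurewicz map $\pi_n(\tilde{M})\to H_n(\tilde{M})$; since that Hurewicz map need not be injective, the two kernels in $\pi_n(M)$ are not forced to coincide. I do not see how to close this with the homological data at hand, and the higher cells pose the same problem one degree up. (Incidentally, the obstructions to extending a map lie in $H^{k+1}(W_0,M;\pi_k(W_1))$, not in $\pi_*(W_1,M)$.)

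The paper avoids this difficulty by supplying the map $W_0\to W_1$ \emph{geometrically} rather than by obstruction theory. Using Cieliebak's splitting $W_0\subset V\times\C$, one embeds a disjoint second copy of $W_0$ inside a collar of~$W_1$ (Section~\ref{section:filling}, Figure~\ref{figure:sq-cobordism}); the complement $X=W_1\setminus\Int(W_0)$ is then a cobordism from $M_0$ to~$M_1$. From the covering-space homology epimorphism and the subcriticality of both fillings one gets $H_k(\tilde{W}_1,\tilde{W}_0)=0$ in all degrees (Theorem~\ref{thm:W-simeq-W0}), hence $H_k(\tilde{X},\tilde{M}_0)=0$ by excision, and then relative Hurewicz plus Whitehead show that $M_0$ is a deformation retract of~$X$, so $W\simeq W_1=W_0\cup_{M_0}X\simeq W_0$. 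The point is that once the embedding $W_0\hookrightarrow W_1$ already exists, verifying it is a homotopy equivalence is a purely homological question, whereas building such a map cell by cell runs into genuine homotopy-theoretic obstructions.
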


Of course the statement may be empty if the given contact manifold
does not admit any subcritical Stein fillings.

In Section~\ref{section:simple} we consider fillings of simple
manifolds~$M$. Recall that a topological space is called
\emph{simple} if its fundamental group acts trivially on all
its homotopy groups. Examples of simple manifolds are Lie groups
or, more generally, any manifold that is an
$H$-space~\cite[Corollary~$8^{bis}.3$]{mccl01}.
The main result of Section~\ref{section:simple} is the following.

\begin{thm}
\label{thm:simple}
Let $(M,\xi)$ be as in Theorem~\ref{thm:homology}, $n\geq 3$.
If $M$ is a simple space whose fundamental group
has vanishing Whitehead group, then all symplectically
aspherical fillings of $M$ are diffeomorphic.
\end{thm}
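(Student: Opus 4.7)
The plan is to reduce the statement to an application of the $s$-cobordism theorem, which is available here because $\dim M = 2n-1 \geq 5$ (as $n\geq 3$) and $\Wh(\pi_1(M)) = 0$ by hypothesis. Since $M$ is simple, $\pi_1(M)$ acts trivially on itself by inner automorphisms, hence $\pi_1(M)$ is abelian, so Theorem~\ref{thm:homology}(b) yields that the inclusion $M\hookrightarrow W$ is a $\pi_1$-isomorphism; the same holds for $M\hookrightarrow W_0$. Part~(a) of the same theorem provides $H_*(W)\cong H_*(W_0)$, with these isomorphisms mediated by the respective inclusions of~$M$.

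My next step is to realise $W_0$ smoothly inside $W$ as a codimension-zero submanifold restricting to the identity on~$M$. By Cieliebak's splitting theorem (already invoked in the proof of Theorem~\ref{thm:homology}), the subcritical Stein filling $W_0$ is a regular neighbourhood of a CW spine of dimension $\ell_0 \leq n-1$ whose handle-attaching isotropic spheres lie in~$M$. Carrying out the same sequence of subcritical handle attachments along a collar $M\times[0,1]\subset W$ produces a smooth embedding $i\co W_0\hookrightarrow W$ with $i|_M = \mathrm{id}_M$. Set $N := \overline{W \setminus i(W_0)}$, a smooth cobordism from $M$ to~$M$.

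A Mayer--Vietoris computation on $W = i(W_0) \cup N$, combined with the isomorphism $H_*(i(W_0)) \cong H_*(W)$ established above, yields $H_*(N, M) = 0$ on each boundary copy of~$M$. Both inclusions $M\hookrightarrow N$ are $\pi_1$-isomorphisms because $\pi_1(M)$ is abelian and the composite $M\hookrightarrow i(W_0)\hookrightarrow W$ already is. Simplicity of $M$ (which ensures trivial $\pi_1$-actions on all relevant homotopy groups) combined with the relative Hurewicz theorem for nilpotent pairs then upgrades the homological vanishing to $\pi_*(N, M) = 0$ on either side. Thus $N$ is an $h$-cobordism, its Whitehead torsion lies in $\Wh(\pi_1(M)) = 0$, so $N$ is an $s$-cobordism, and the $s$-cobordism theorem delivers a diffeomorphism $N \cong M\times[0,1]$ rel~$M$, whence $W\cong W_0$.

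The principal obstacle is the embedding step. While general position easily places the subcritical cores (of dimension at most $n-1$) inside the ambient $2n$-manifold $W$, one must verify that the resulting regular neighbourhood is diffeomorphic to $W_0$ rather than merely homotopy equivalent to it. The simplicity hypothesis is essential here: framing obstructions for the normal bundles of the cores naturally take values in cohomology with coefficients in homotopy groups twisted by the $\pi_1$-action, and triviality of this action reduces them to untwisted invariants, which match between $W$ and $W_0$ by the shared homology and fundamental group supplied by Theorem~\ref{thm:homology}. Without simplicity, one would have to confront potentially nontrivial local coefficient systems, and the direct transport of the subcritical handle decomposition of $W_0$ into $W$ could fail.
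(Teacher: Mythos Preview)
Your overall strategy --- produce a cobordism from $M$ to itself and apply the $s$-cobordism theorem --- is the paper's strategy as well, and your reduction to abelian $\pi_1(M)$ via simplicity is correct. But two of your steps do not go through as written.

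\textbf{The embedding step.} You assert that $W_0$ embeds in $W$ with $i|_M=\mathrm{id}_M$ by ``carrying out the same sequence of subcritical handle attachments along a collar $M\times[0,1]\subset W$.'' This is incoherent: the attaching spheres of the Stein handles of $W_0$ lie in the boundaries of intermediate sublevel sets (starting from $S^{2n-1}$), not in $M=\partial W_0$; the dual handles built inward from $M$ have index $\geq n+1$ and their cores cannot be placed by general position in a $2n$-manifold. Your closing paragraph concedes this is the crux, but the appeal to framing obstructions and simplicity is not a proof. The paper sidesteps the issue entirely: rather than embed $W_0$ in~$W$, it uses the split structure $W_0\subset V\times\C$ to build symplectically a manifold $W_1\cong W$ containing a \emph{translated} copy of~$W_0$, and sets $X:=W_1\setminus\Int(W_0)$.

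\textbf{The homotopy step.} You claim that simplicity of $M$ ``ensures trivial $\pi_1$-actions on all relevant homotopy groups'' and then invoke relative Hurewicz. But simplicity of $M$ only gives triviality of the action on $\pi_*(M)$; it says nothing directly about the action on $\pi_k(N,M)$ or on $\pi_k(N)$, so the pair $(N,M)$ is not known to be nilpotent. The paper extracts from simplicity of $M_0$ only that the boundary map $\partial\co\pi_k(X,M_0)\to\pi_{k-1}(M_0)$ vanishes, and then needs a genuinely new input: surjectivity of $H_*(\wtM_0)\to H_*(\wtX)$ on universal covers. This does not follow from the integral statement $H_*(X,M_0)=0$; it requires re-running the entire holomorphic-sphere / degree-one evaluation argument on the covering space (the paper's Section~\ref{section:coverings}, specifically Lemma~\ref{lem:M0-sur-X}). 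With that in hand one gets $H_k(\wtX,\wtM_0)=0$, hence $\pi_k(X,M_0)\cong\pi_k(\wtX,\wtM_0)=0$ by ordinary relative Hurewicz. Your proposal lacks this covering-space input, and without it the inductive step from $H_*=0$ to $\pi_*=0$ fails.
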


In Section~\ref{section:cotangent} we apply our theory to
the sphere bundle of stabilised cotangent bundles.
Finally, in Section~\ref{section:symplecto} we show that
if $(M,\xi)$ admits a $2$-subcritical Stein filling,
then all flexible Stein fillings (e.g.\ subcritical ones)
are symplectomorphic.

For other results on the topology of Stein fillings see
\cite[Chapter~16]{ciel12} and \cite[Chapter~12]{ozst04}
and the references therein.
\section{Proof of Theorem~\ref{thm:homology}}
\label{section:proof-homology}
\subsection{A completion of the filling}
\label{subsection:completion}
According to a theorem of Cie\-lie\-bak \cite{ciel02},
\cite[Section~14.4]{ciel12}, every subcritical Stein manifold is
deformation equivalent (hence symplectomorphic) to a split one.
Thus, if $(W_0,\omega_0)$ is the given subcritical Stein filling of
$(M,\xi)$, we may assume --- perhaps after scaling $\omega_0$ by a small
positive constant --- that there is a $(2n-2)$-dimensional Stein manifold
$(V,J_V)$ with plurisubharmonic function $\psi_V$ (with
$\min\psi_V=0$) and symplectic form $\omega_V=-\rmd(\rmd \psi_V\circ J_V)$,
such that $W_0$ is a sublevel set of the plurisubharmonic potential
\[ \psi(v,z):=\psi_V(v)+\frac{1}{4}\log\bigl(1+|z|^2\bigr) \]
on $(V\times\C,J_V\oplus\rmi)$, and such that, with $z=r\rme^{\rmi\theta}$,
the corresponding symplectic form
\[ \omega:=\omega_V+\frac{r\,\rmd r\wedge\rmd\theta}{(1+r^2)^2} \]
on $V\times\C$ coincides with $\omega_0$ on $W_0$ under the inclusion
$W_0\subset V\times\C$.

Now, given a symplectically aspherical filling
$(W,\omega)$ of $(M,\xi)$, we define the symplectic manifold
\[ (Z,\Omega):=(W,\omega)\cup_{(M,\xi)}\Bigl((V\times\C)
\setminus\Int(W_0), \omega_V+
\frac{r\,\rmd r\wedge\rmd\theta}{(1+r^2)^2}\Bigr).\]
Our choice of plurisubharmonic potential on the $\C$-factor is explained
by the fact that we can now
build a new symplectic manifold $(\hat{Z},\hat{\Omega})$
from $(Z,\Omega)$ by compactifying $\C$ to a complex projective
line $\CP^1=\C\cup\{\infty\}$ with its standard Fubini--Study
symplectic form of total area~$\pi$, see Figure~\ref{figure:completion}.

\begin{figure}[h]
\labellist
\small\hair 2pt
\pinlabel $(W,\omega)$ at 92 70
\pinlabel $(M,\xi)$ [bl] at 418 262
\pinlabel $\CP^1$ [bl] at 423 330
\pinlabel $V\times\{\infty\}$ [bl] at 505 189
\endlabellist
\centering
\includegraphics[scale=0.55]{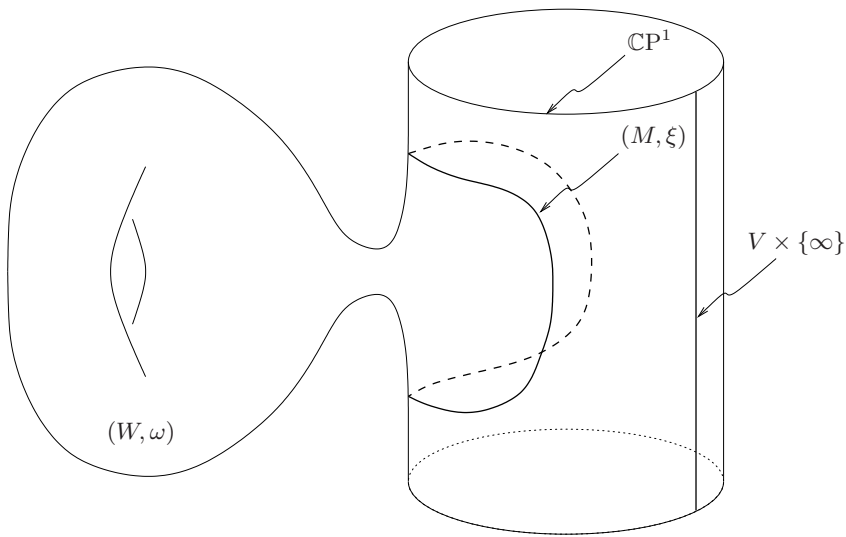}
  \caption{The symplectic manifold $\hat{Z}$.}
  \label{figure:completion}
\end{figure}

With $\omega_0$ scaled sufficiently small, we may assume that $M=\partial W_0$
is a level set of $\psi$ below $(\log 2)/4$, so that the hypersurfaces
$V\times\{\pm 1\}$ may also be regarded as subsets of~$\hat{Z}$.
Moreover, without loss of generality we may assume that $\psi_V$
has no critical points above the level of~$M$. This ensures that
the homotopical dimension of $V$ equals that of~$W_0$, i.e.\ $\ell_0$,
and $W$ is a strong deformation retract of~$Z$.

We equip the symplectic manifold $(\hat{Z},\hat{\Omega})$
with a compatible almost complex structure $J$, generic
in the sense of \cite{mcsa04} on
$\Int(W_0)$, and equal to $J_V\oplus\rmi$ on $\hat{Z}\setminus\Int(W)$.
\subsection{Holomorphic spheres}
For $v\in V$ with $\psi_V(v)>(\log 2)/4$ we have the obvious holomorphic spheres
$\{v\}\times\CP^1\subset (\hat{Z},J)$, which foliate the
corresponding part of~$\hat{Z}$.

\begin{lem}
\label{lem:spheres}
Let $u\co\CP^1\rightarrow\hat{Z}$ be a non-constant $J$-holomorphic sphere.
\begin{itemize}
\item[(i)] If $u(\CP^1)$ is contained in $\hat{Z}\setminus\Int(W)$,
then $u$ is a holomorphic branched covering $\CP^1\rightarrow
\{v\}\times\CP^1$ for some $v\in V$.
\item[(ii)] If $u(\CP^1)$ intersects $\hat{Z}\setminus\Int(W)$, then
it also intersects the hypersurface $H:=V\times\{\infty\}$.
\item[(iii)] If $u(\CP^1)$ intersects $\{\psi_V>(\log 2)/4\}\times\CP^1$,
then $u$ is as in~(i).
\end{itemize}
\end{lem}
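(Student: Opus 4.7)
The plan is to combine the product splitting $J = J_V \oplus \rmi$ on $\hat{Z} \setminus \Int(W)$ with the maximum principle for subharmonic functions. Write $c < (\log 2)/4$ for the level with $M = \{\psi = c\}$, so that $W_0 = \{\psi \leq c\}$ and $\psi > c$ on $(V \times \C) \setminus W_0$. For (i), the projection $\pi_V \circ u \co \CP^1 \to V$ is holomorphic, so $\psi_V \circ \pi_V \circ u$ is a subharmonic function on the closed Riemann surface $\CP^1$, hence constant. Strict plurisubharmonicity of $\psi_V$ then forces $\pi_V \circ u$ itself to be constant (otherwise $\psi_V \circ \pi_V \circ u$ would be strictly subharmonic off the finite critical set of $\pi_V \circ u$, contradicting constancy). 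Hence $u$ lands in a single fibre $\{v\} \times \CP^1$, and a non-constant holomorphic map between compact Riemann surfaces is automatically a branched covering.

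For (ii), let $\Sigma' \subset \CP^1$ be a connected component of the open set $u^{-1}(\hat{Z} \setminus W)$ on which $u$ maps into the product region. If $\Sigma' = \CP^1$, part (i) applies and $u$ branch-covers some $\{v\} \times \CP^1$, which meets $H$ at $(v,\infty)$. Otherwise $\partial \Sigma'$ is non-empty and contained in $u^{-1}(M)$. Assume for contradiction that $u(\Sigma') \cap H = \emptyset$; then $u(\Sigma') \subset (V \times \C) \setminus W_0$, so $\psi \circ u > c$ on $\Sigma'$. But $\psi \circ u$ is subharmonic on $\Sigma'$ (both summands of $\psi$ are plurisubharmonic and the two components of $u$ are $J_V$- and $\rmi$-holomorphic there), and the maximum principle bounds its supremum on $\overline{\Sigma'}$ by its boundary value~$c$---contradiction. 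The edge case $u^{-1}(\hat{Z} \setminus W) = \emptyset$, in which $u$ meets $\hat{Z} \setminus \Int(W)$ only along~$M$, is excluded by the boundary maximum principle using that $M$ is $J$-convex from the interior of~$W$.

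For (iii), if $u$ meets $\{\psi_V > (\log 2)/4\} \times \CP^1$ at a point $p$, take the component $\Sigma'$ of $u^{-1}(\hat{Z} \setminus W)$ containing $p$. The subharmonic function $\psi_V \circ \pi_V \circ u$ exceeds $(\log 2)/4$ at $p$, while on $\partial \Sigma' \subset u^{-1}(M)$ it is bounded by $c < (\log 2)/4$ (because on $M$, $\psi_V(v) \leq \psi(v,z) = c$). The maximum principle therefore forces $\partial \Sigma' = \emptyset$, whence $\Sigma' = \CP^1$, and part (i) closes the argument.

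The main obstacle will be bookkeeping: in each case one must pick the correct plurisubharmonic function---either the full $\psi$ on $V \times \C$ or the extension $\psi_V \circ \pi_V$ defined on all of $V \times \CP^1$---and restrict it to the appropriate open subset of $\CP^1$ on which $u$ maps into the product region. A secondary subtlety is the grazing case in (ii), where $u$ might intersect $\hat{Z} \setminus \Int(W)$ only by touching $M$ from inside~$\overline{W}$, which one excludes using the $J$-convexity of $M$ rather than a psh function on the filling~$W$ itself.
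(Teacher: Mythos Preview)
Your proof is correct and follows the same maximum-principle strategy as the paper. The only minor variation is in the grazing case of~(ii): the paper extends $\psi$ into a collar of $M$ inside~$W$ and uses exactness of $\omega$ there to conclude that $u$ has zero energy and is constant, whereas you invoke the boundary maximum principle via $J$-convexity of~$M$; both are standard and equivalent in effect.
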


\begin{proof}
(i) This follows from the maximum principle for $J_V$-holomorphic curves
in~$V$, applied to the $V$-component of~$u$.

(ii) The plurisubharmonic function $\psi$ is defined on
$\hat{Z}\setminus(\Int(W)\cup H)$ and a collar
neighbourhood of $M$ in~$W$, and the symplectic form
$\omega$ is exact in this region. Thus, if $u(\CP^1)$
intersects $\hat{Z}\setminus\Int(W)$, but not the hypersurface~$H$,
the maximum principle constrains $u(\CP^1)$ to lie
in a level set of~$\psi$, and hence inside the region where $\omega$
is exact. This forces $u$ to be constant.

(iii) Apply the maximum principle to the
$V$-component of $u$ on the preimage of $\{\psi_V>(\log 2)/4\}\times\CP^1$.
\end{proof}

Let $\MM$ be the moduli space
of holomorphic spheres $u\co\CP^1\rightarrow(\hat{Z},J)$ with the
properties
\begin{itemize}
\setlength\itemsep{.3em}
\item[(M1)] $[u]=\bigl[\{v\}\times\CP^1\bigr]$ in the homology group
$H_2(\hat{Z})$, where $v\in V$ can be any point with
$\psi_V(v)>(\log 2)/4$;
\item[(M2)] $u(z)\in V\times\{z\}$ for $z\in\{\pm1,\infty\}$.
\end{itemize}
Observe that a holomorphic sphere $u\in\MM$
satisfying one of the assumptions (i) or (iii) in Lemma~\ref{lem:spheres}
is simply an inclusion map $\CP^1\rightarrow\{v\}\times\CP^1$.

\begin{prop}
The moduli space $\MM$ is an oriented manifold of dimension~$2n-2$.
\end{prop}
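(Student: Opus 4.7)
The plan is to realise $\MM$ as the transverse zero set of a Fredholm section, whence it carries a smooth manifold structure, and to equip it with a canonical orientation from the complex-linear structure on the linearised problem.

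For the dimension, using $T(V\times\CP^1)=TV\oplus T\CP^1$ and $c_1(T\CP^1)[\CP^1]=2$, the class $A=[\{v\}\times\CP^1]$ satisfies $c_1(A)=2$. Hence the parametrised moduli space of $J$-holomorphic spheres $\CP^1\to\hat{Z}$ in class $A$ has virtual real dimension $2n+2c_1(A)=2n+4$, and the three incidence conditions (M2) against the codimension-$2$ hypersurfaces $V\times\{z\}$ reduce this to $\dim\MM=2n+4-6=2n-2$. Equivalently, the three fixed marked points $1,-1,\infty$ gauge-fix the $6$-dimensional $PSL(2,\C)$-reparametrisation freedom, so this agrees with the unparametrised count $2n-6+2c_1(A)$.

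For smoothness, by Lemma~\ref{lem:spheres} together with the degree-$1$ constraint from~(M1), each $u\in\MM$ is of one of two types: (a) a \emph{trivial} sphere $u(z)=(v,z)$ for some $v\in V$ with $\psi_V(v)>(\log 2)/4$, forced by Lemma~\ref{lem:spheres}(i) together with~(M2) (once the image is contained in $\hat Z\setminus\Int(W)$, the map is a biholomorphism onto $\{v\}\times\CP^1$ in class~$A$, and the three marked-point conditions pin down the parametrisation); or (b) a sphere whose image meets $\Int(W)$ and, by Lemma~\ref{lem:spheres}(ii), also meets the hypersurface~$H$. In case~(b), $u$ is simple (the fiber class $A$ has intersection number $1$ with~$H$, so is primitive, and a $k$-fold cover would give non-integral intersection $1/k$), and its dense set of injective points meets $\Int(W)$; the generic choice of $J$ there, via the Sard--Smale argument of~\cite{mcsa04}, makes the linearised $\bar\partial_J$ surjective and the three-point evaluation transverse to $V\times\{\pm 1,\infty\}$. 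In case~(a), where $J=J_V\oplus\rmi$ is rigidly prescribed and cannot be perturbed, I would invoke automatic regularity: along $u(z)=(v,z)$ the holomorphic bundle $u^*T\hat{Z}$ splits as $\underline{\C}^{\,n-1}\oplus\mathcal{O}(2)$, both summands of non-negative degree, so $H^1(\CP^1,u^*T\hat{Z})=0$ and the linearisation is surjective. Transversality of the three-point evaluation then reduces to the fact that a holomorphic section of $\mathcal{O}(2)\cong T\CP^1$ is uniquely determined by, and realises arbitrarily, values at any three distinct points of~$\CP^1$.

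For the orientation, the linearised $\bar\partial_J$-operator is real Fredholm with complex-linear leading symbol, so its determinant line bundle has a canonical orientation, orienting the parametrised moduli space; the constraint submanifolds $V\times\{z\}\subset\hat{Z}$ are almost complex, hence canonically co-oriented, so $\MM$ inherits a canonical orientation from the transverse intersection. The main obstacle will be transversality in case~(a), since $J$ is fixed on $\hat{Z}\setminus\Int(W)$ and one must verify surjectivity of the linearisation directly; the automatic regularity argument via the splitting $u^*T\hat{Z}\cong\underline{\C}^{\,n-1}\oplus\mathcal{O}(2)$ is the essential analytic input.
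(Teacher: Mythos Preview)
Your argument is correct and follows the same standard route the paper invokes by citing \cite[Proposition~6.1]{geze12}: automatic regularity for the fibre spheres via the splitting $u^*T\hat Z\cong\underline{\C}^{\,n-1}\oplus\mathcal{O}(2)$, and generic transversality for simple spheres meeting the region where $J$ is free. One minor imprecision: in your case~(a) the constraint on $v$ should be $\psi_V(v)\geq c$, where $c<(\log 2)/4$ is the level defining $M=\partial W_0$, rather than $\psi_V(v)>(\log 2)/4$; Lemma~\ref{lem:spheres}(i) does not impose the stronger bound, and trivial spheres with $c\leq\psi_V(v)\leq(\log 2)/4$ do occur. This does not affect your argument, since the automatic-regularity computation applies verbatim to any embedded sphere $z\mapsto(v,z)$ lying in the region where $J=J_V\oplus\rmi$.
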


\begin{proof}
This is proved exactly as \cite[Proposition~6.1]{geze12}. Notice that
our moduli space $\MM$ corresponds to $\MM_{-1,1,\infty}$
in \cite[p.~277]{geze12}.
\end{proof}

In the following proposition, the degree of a proper map between
non-compact oriented manifolds is understood in the sense
of \cite[Exercise 5.1.10]{hirs76}. We think of $\CP^1$
as $\C\cup\{\infty\}$.

\begin{prop}
\label{prop:ev}
The evaluation map
\[ \begin{array}{ccc}
\MM\times\CP^1 & \longrightarrow & \hat{Z}\\
(u,z)          & \longmapsto     & u(z)
\end{array}\]
is proper and of degree~$1$. It restricts to a proper degree~$1$ map
\[ \ev\co\MM\times\C\longrightarrow Z.\]
\end{prop}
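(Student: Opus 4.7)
My plan is to verify properness via Gromov compactness, compute the degree on the simple ``fibre'' region $\{\psi_V>(\log 2)/4\}\times\CP^1$, and then control the restriction to $\MM\times\C$ using positivity of intersection with the divisor $H=V\times\{\infty\}$.

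For properness of $\ev\co\MM\times\CP^1\to\hat{Z}$, I would take a sequence $(u_n,z_n)$ with $u_n(z_n)\to p\in\hat{Z}$ and, after passing to a subsequence, assume $z_n\to z_\infty\in\CP^1$. Applying Gromov compactness to $u_n$ produces a stable $J$-holomorphic limit, and the key point is to exclude bubbling. Every $J$-holomorphic sphere in $\hat{Z}$ should have $\hat{\Omega}$-area a non-negative integer multiple of~$\pi$: decomposing $\pi_2(\hat{Z})$ via Mayer--Vietoris in terms of $\pi_2(W)$ and $\pi_2(V\times\CP^1)=\pi_2(V)\oplus\pi_2(\CP^1)$, the asphericity of $\omega$ on~$W$ and the exactness of $\omega_V$ on~$V$ ensure that only the $\CP^1$-degree contributes to the area, weighted by the Fubini--Study area~$\pi$. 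Since the class $[\{v\}\times\CP^1]$ has area exactly~$\pi$, a bubble tree with two or more non-constant components would already require area~$\geq 2\pi$, a contradiction. Hence the limit is a single $J$-holomorphic sphere in the correct class, and the constraints~(M2) pass to the limit because $V\times\{\pm 1,\infty\}$ are closed; this gives convergence in $\MM\times\CP^1$.

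To pin down $\deg(\ev)=1$, I would evaluate at a regular value lying in the product region $\{\psi_V>(\log 2)/4\}\times\CP^1$. By Lemma~\ref{lem:spheres}(iii), any $u\in\MM$ meeting this region is an inclusion $\CP^1\hookrightarrow\{v\}\times\CP^1$; the three point constraints at $z=\pm 1,\infty$ together with $u(\pm 1)\in V\times\{\pm 1\}$ and $u(\infty)\in V\times\{\infty\}$ pin the parametrisation down to the identity, so $u$ is the graph $z\mapsto(v,z)$ for a unique such~$v$. Over this region the evaluation map thus agrees with the identity of $V\times\CP^1\subset\hat{Z}$, which is an orientation-preserving diffeomorphism contributing local degree~$+1$; by properness the global degree is therefore~$+1$.

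For the restriction $\ev\co\MM\times\C\to Z$, I would first establish $\ev^{-1}(H)=\MM\times\{\infty\}$. The inclusion $\supset$ is~(M2). For $\subset$, the homological intersection $[u]\cdot[H]=[\{v\}\times\CP^1]\cdot[V\times\{\infty\}]=1$ combined with positivity of intersection --- valid because $J=J_V\oplus\rmi$ near~$H$ makes $H$ a $J$-complex hypersurface, and no $u\in\MM$ lies in~$H$ since $u(\pm 1)\notin H$ --- forces a single transverse intersection, which by~(M2) must be at $z=\infty$. Properness of the restriction then follows from properness of the unrestricted map: a limit $z_\infty=\infty$ would force $u_n(z_n)\to u(\infty)\in H$, contradicting $u_n(z_n)\to p\in Z$. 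The degree of the restriction is $+1$ because the regular value used above already lies in~$Z$ and its unique preimage lies in $\MM\times\C$. The main obstacle I anticipate is the bubbling exclusion, specifically the area-quantisation step requiring a clean description of $\pi_2(\hat{Z})$ and the relevant symplectic area classes; once that is in hand, the degree count and the positivity-of-intersection argument with~$H$ are standard.
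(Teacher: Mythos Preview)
Your degree computation in the product region and your treatment of the restriction via $\ev^{-1}(H)=\MM\times\{\infty\}$ match the paper's argument. The gap --- which you correctly flag --- is in the bubbling exclusion. The area-quantisation step does not go through as sketched: there is no Mayer--Vietoris sequence for $\pi_2$, and a sphere in $\hat{Z}$ cannot in general be written as a sum of a spherical class in $W$ and one in $V\times\CP^1$ (the obstruction sits in $\pi_1(M)$, which is not assumed trivial here). Without such a decomposition you do not know that the $\hat{\Omega}$-area of an arbitrary $J$-holomorphic sphere lies in $\pi\Z$, so the inequality ``two non-constant bubbles $\Rightarrow$ area $\geq 2\pi$'' is unjustified.

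The paper avoids area quantisation entirely and instead counts intersections with~$H$. For a Gromov limit $(u^{\alpha})_{1\leq\alpha\leq N}$, positivity of intersection and $[u]\bullet H=1$ force one component, say~$u^1$, to have $u^1\bullet H=1$ and the rest $u^j\bullet H=0$. By positivity the latter means $u^j$ misses~$H$, and then the contrapositive of Lemma~\ref{lem:spheres}(ii) forces $u^j(\CP^1)\subset\Int(W)$; symplectic asphericity of $(W,\omega)$ makes such $u^j$ constant, so $N=1$. This is both simpler and more robust: intersection numbers with the $J$-complex hypersurface~$H$ are integers that pass cleanly to stable limits, and Lemma~\ref{lem:spheres}(ii) converts the homological constraint $u^j\bullet H=0$ into the geometric containment $u^j\subset\Int(W)$ without any appeal to the structure of $\pi_2(\hat{Z})$.
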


\begin{proof}
Let $(u^{\alpha})_{1\leq\alpha\leq N}$ be a stable map
in the sense of \cite{mcsa04} that arises as the Gromov-limit of a
sequence $(u_{\nu})$ of spheres in~$\MM$. We need to show that $N=1$, so that
$u_{\nu}\rightarrow u^1$ as a $C^{\infty}$-limit. The claim
about the first evaluation map then follows from Lemma~\ref{lem:spheres}
and the observation following the definition of~$\MM$,
which say that the non-compact ends of $\hat{Z}$ are
foliated by holomorphic spheres $\{v\}\times\CP^1$, and no other spheres 
intersect these ends.

By positivity of intersections \cite[Proposition~7.1]{cimo07}
and (M1) we may assume that $u^1\bullet H=1$ and $u^j\bullet H=0$ for
$j=2,\ldots, N$. Then Lemma~\ref{lem:spheres}~(ii) tells us that
the $u^j$ are contained entirely in $\Int(W)$ for $j=2,\ldots, N$,
but $W$ does not contain any non-constant holomorphic spheres.

From positivity of intersection and (M2) we conclude that
$u^{-1}(H)=\{\infty\}$ for $u\in\MM$, so the evaluation map
restricts to $\C\subset\CP^1$ as claimed.
\end{proof}

\subsection{A homology epimorphism}
\label{subsection:hom-epi}
From Proposition~\ref{prop:ev} we now deduce crucial homological
information.

\begin{prop}
\label{prop:hom-epi}
The induced homomorphism
\[ \ev_*\co H_k(\MM\times\C)\longrightarrow H_k(Z)\]
is surjective in all degrees~$k$.
\end{prop}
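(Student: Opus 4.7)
The plan is to derive the surjectivity of $\ev_*$ from the general topological principle that a proper map of degree~$1$ between oriented manifolds of the same dimension induces a surjection on singular homology in every degree. All the required hypotheses are already in place: by the previous proposition $\MM$ is an oriented manifold of real dimension $2n-2$, so $\MM\times\C$ is an oriented $2n$-manifold; the open symplectic manifold $Z$ is likewise naturally oriented and of dimension~$2n$; and the restricted evaluation map $\ev\co\MM\times\C\to Z$ is proper of degree~$1$ by Proposition~\ref{prop:ev}.

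To make the appeal to this general principle explicit, I would invoke Poincar\'e--Lefschetz duality to identify
\[ H_k(\MM\times\C)\cong H^{2n-k}_c(\MM\times\C),\qquad H_k(Z)\cong H^{2n-k}_c(Z). \]
Because $\ev$ is proper, the pullback $\ev^*$ on compactly supported cohomology is defined, as is the Gysin transfer $\ev_!$ going in the other direction. The degree-one hypothesis then yields the Umkehr identity $\ev_!\circ\ev^*=\deg(\ev)\cdot\mathrm{id}=\mathrm{id}$, so $\ev_!$ is a split surjection. Under the Poincar\'e duality isomorphisms just recalled, the transfer $\ev_!$ corresponds precisely to $\ev_*$ on singular homology, which gives the claim.

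The one piece of bookkeeping I would want to verify is that $Z$ is connected, so that Poincar\'e--Lefschetz duality applies cleanly. This is immediate from the construction in Section~\ref{subsection:completion}: both $W$ and the end $(V\times\C)\setminus\Int(W_0)$ are connected manifolds sharing the connected boundary~$M$, so their gluing~$Z$ is connected. Apart from this I do not anticipate any serious obstacle; the geometric substance -- the properness of $\ev$ and the degree computation -- has already been absorbed into Proposition~\ref{prop:ev}, and what remains is essentially a formal duality argument. If one wished to avoid the transfer formalism, an equivalent route is to represent a class in $H_k(Z)$ by a smooth cycle and, by a transversality argument, lift it along $\ev$ using that the signed preimage count of a regular value equals~$1$; but the Poincar\'e-duality route seems cleanest.
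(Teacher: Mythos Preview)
Your proposal is correct and follows essentially the same approach as the paper: both arguments exploit that a degree-$1$ map admits a shriek/transfer serving as a one-sided inverse to the pushforward, forcing surjectivity. The only cosmetic difference is that the paper first passes to compact deformation retracts $P\subset\MM\times\C$ and $W\subset Z$ (compact manifolds with boundary) and then invokes the identity $f_*f_!=\deg(f)\cdot\mathrm{id}$ from Bredon for the resulting map of pairs $(P,\partial P)\to(W,\partial W)$, whereas you work directly with the open manifolds via Poincar\'e duality with compact supports.
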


\begin{proof}
Write $D^2_R\subset\C$ for the closed $2$-disc of radius~$R$.
By Lemma~\ref{lem:spheres}, for $R$ sufficiently large
we have
\[ \ev\bigl(\MM\times(\C\setminus\Int(D^2_R))\bigr)\subset Z\setminus W.\]

Write $\MM'\subset\MM$ for the truncated moduli space
obtained by cutting off the non-compact end of $\MM$
consisting of spheres $\{v\}\times\CP^1$ with $\psi_V(v)>(\log 2)/4$.
Then $\MM\times\C$
strongly deformation retracts to the compact manifold
(with boundary) $P:=\MM'\times D^2_R$.

As observed after the construction of $Z$ and~$\hat{Z}$,
the compact manifold $W$ (with boundary~$M$) is a strong
deformation retract of~$Z$. By pre- and postcomposing
$\ev$ with the respective deformation retraction, we obtain
a map of pairs
\[ f\co (P,\partial P) \longrightarrow (W,\partial W).\]
Since the degree of $\ev$ can be computed at any regular value $w$
in the interior of $W$, and neither $w$ nor the discrete set of points
$\ev^{-1}(w)\subset P$
is affected by the deformation retractions, the map $f$ is likewise
of degree~$1$.
This degree can now be interpreted homologically; $\deg (f)=1$
says that the fundamental cycle $[P]\in H_{2n}(P,\partial P)$
is mapped to the fundamental cycle $[W]\in H_{2n}(W,\partial W)$.

It then follows that the shriek homomorphism $f_!\co H_k(W)\rightarrow
H_k(P)$ is a right inverse for $f_*\co H_k(P)\rightarrow H_k(W)$,
since the composition $f_*f_!$ is simply multiplication by the
homological degree, see~\cite[Proposition~VI.14.1]{bred93}.

Hence, $f_*\co H_k(P)\rightarrow H_k(W)$ is surjective in all degrees, and
the same is true for $\ev_*$, as we have merely passed to deformation
retracts.
\end{proof}
\subsection{Proof of Theorem~\ref{thm:homology}~(a)}
\label{subsection:proof-homology-a}
By condition (M2) we have the commutative diagram
\begin{diagram}
H_k\bigl(\MM\times\{1\}\bigr) & \rTo^{\ev_*} & H_k\bigl(V\times\{1\}\bigr) \\
\dTo^{i_*}                    &              & \dTo_{j_*} \\
H_k(\MM\times\C)              & \rTo^{\ev_*} & H_k(Z),
\end{diagram}
where the vertical homomorphisms are induced by inclusion.
Since $i_*$ is an isomorphism and $\ev_*$ at the bottom
is surjective by Proposition~\ref{prop:hom-epi}, the homomorphism
$j_*$ is likewise surjective.

The Stein manifold $V$ has the homotopy type of an $\ell_0$-dimensional
complex. It follows that
\[ H_k(Z)=0\;\;\;\text{for $k\geq\ell_0+1$.}\]
The same homological vanishing result holds for the
deformation retract $W$ of~$Z$. This means that the homological
dimension of $W$ can be at most that of the subcritical
filling~$W_0$. Beware that, \emph{a priori}, the homotopical
dimension, i.e.\ the smallest dimension of a CW complex homotopy
equivalent to~$W$, might well be larger.

\begin{lem}
\label{lem:relative}
The relative homology group $H_k(W,M)$ vanishes for $k\leq 2n-1-\ell_0$.
\end{lem}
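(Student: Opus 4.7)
The plan is to convert the relative homology vanishing into a cohomological statement via Poincar\'e--Lefschetz duality, and then to read it off from the homology information already at hand, modulo a small refinement to control torsion. Since $W$ is a compact oriented $2n$-manifold with boundary~$M$, duality gives $H_k(W,M)\cong H^{2n-k}(W)$, so it suffices to show $H^j(W)=0$ for $j\geq\ell_0+1$. Combining the universal coefficient theorem
\[ H^j(W;\Z)\cong\mathrm{Hom}\bigl(H_j(W),\Z\bigr)\oplus\mathrm{Ext}\bigl(H_{j-1}(W),\Z\bigr) \]
with the already established vanishing $H_j(W)=0$ for $j\geq\ell_0+1$ yields the desired conclusion immediately for $j\geq\ell_0+2$. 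Only the case $j=\ell_0+1$ remains, and only because the $\mathrm{Ext}$-summand could \emph{a priori} detect torsion in $H_{\ell_0}(W)$.

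To rule that out, I would re-run the argument of Proposition~\ref{prop:hom-epi} with coefficients in each prime field $\F_p$. The shriek construction rests only on $f$ being a degree-one map between compact oriented manifolds, and therefore works over any coefficient ring; it produces a surjection $\ev_*\co H_k(\MM\times\C;\F_p)\to H_k(Z;\F_p)$. Chasing the commutative diagram at the start of Section~\ref{subsection:proof-homology-a} with these coefficients, and observing that $H_k(V;\F_p)=0$ for $k\geq\ell_0+1$ since $V$ has the homotopy type of an $\ell_0$-dimensional CW complex, then gives $H_k(W;\F_p)=0$ in the same range. The universal coefficient theorem for homology therefore forces
\[ \mathrm{Tor}\bigl(H_{\ell_0}(W),\F_p\bigr)\cong H_{\ell_0+1}(W;\F_p)=0 \]
for every prime~$p$. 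Since $H_{\ell_0}(W)$ is finitely generated and has no $p$-torsion for any prime~$p$, it is torsion-free, so the residual $\mathrm{Ext}$-term in degree $\ell_0+1$ also vanishes, completing the argument.

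The only point that calls for any real attention is the passage of the degree-theoretic surjectivity of Proposition~\ref{prop:hom-epi} to $\F_p$-coefficients; this amounts to the naturality of the shriek homomorphism and of the identity $f_*f_!=\deg(f)\cdot\mathrm{id}$ in the coefficient ring, together with the fact that the degree is the integer~$1$ and remains~$1$ under any change of coefficients. Once this is in place, Poincar\'e--Lefschetz duality and two applications of universal coefficients assemble the pieces into the claimed vanishing of $H_k(W,M)$ throughout the range $k\leq 2n-1-\ell_0$.
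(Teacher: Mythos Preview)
Your argument is correct. It coincides almost verbatim with the alternative proof the paper sketches in the Remark immediately following Lemma~\ref{lem:relative}: running the evaluation-map surjectivity with field coefficients to kill $H_{\ell_0+1}(W;\F_p)$ for all primes, hence the torsion in $H_{\ell_0}(W)$.

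The paper's \emph{main} proof takes a different route to the torsion-freeness of $H_{\ell_0}(W)$: it first observes that $H_{\ell_0}(W_0)$ is torsion-free (since $W_0$ has the homotopy type of an $\ell_0$-complex), then uses the long exact sequence of $(W_0,M)$ together with Poincar\'e duality to deduce $H_{\ell_0}(M)\cong H_{\ell_0}(W_0)$, and finally transfers this back to $W$ via the exact sequence of $(W,M)$, with a small case split according to whether $\ell_0<n-1$ or $\ell_0=n-1$. Your approach avoids that case split and the detour through~$M$, at the cost of re-running Proposition~\ref{prop:hom-epi} with $\F_p$-coefficients; the paper's main argument, by contrast, only ever uses the integral epimorphism but needs the auxiliary comparison with~$W_0$. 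Both are short, and the authors evidently regarded them as interchangeable.
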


\begin{proof}
Write $FH_*$ and $TH_*$ for the free and the torsion part, respectively,
of a homology group $H_*$. By Poincar\'e duality and the universal
coefficient theorem we have
\[ H_k(W,M)\cong H^{2n-k}(W)\cong FH_{2n-k}(W)\oplus
TH_{2n-k-1}(W).\]
As we have shown, the homological dimension of $W$ is at most equal
to~$\ell_0$. The lemma follows for $k<2n-1-\ell_0$.

For $k=2n-1-\ell_0$, it remains to show that $H_{\ell_0}(W)$ is a torsion-free
group. Since $W_0$ has the homotopy
type of an $\ell_0$-dimensional complex, the homology group
$H_{\ell_0}(W_0)$ is torsion-free.
From the homology exact sequence of the pair $(W_0,M)$
we see with Poincar\'e duality and the universal coefficient theorem
that $H_{\ell_0}(M)\cong H_{\ell_0}(W_0)$, so $H_{\ell_0}(M)$ is likewise
torsion-free.

For $\ell_0< n-1$, the cohomology exact sequence of the pair $(W,M)$,
together with the universal coefficient theorem, reduces to
\[ 0\longrightarrow H^{2n-1-\ell_0}(M)\longrightarrow H^{2n-\ell_0}(W,M)
\longrightarrow 0,\]
and hence $H_{\ell_0}(M)\cong H_{\ell_0}(W)$ by Poincar\'e duality. For
$\ell_0=n-1$ we observe that $H_n(W,M)\cong TH_{n-1}(W)$ by
Poincar\'e duality and the universal coefficient theorem,
and then the relevant part of the homology exact sequence of the
pair $(W,M)$ becomes
\[ 0\longrightarrow TH_{n-1}(W)\longrightarrow H_{n-1}(M).\]
Since $H_{n-1}(M)=H_{\ell_0}(M)$ is torsion-free, this
implies $TH_{n-1}(W)=0$.
\end{proof}

\begin{rem}
Alternatively, one sees from the commutative diagram
that $H_k(W;\F)$ vanishes for $k\geq \ell_0+1$ and all fields~$\F$.
One then deduces the vanishing of the relative homology
groups in the lemma over $\F$ with Poincar\'e and Kronecker duality.
Since this vanishing holds for any field $\F$, it must also hold over~$\Z$.
\end{rem}

With this lemma, Theorem~\ref{thm:homology}~(a) is an immediate
consequence of the homology exact sequence of the pair $(W,M)$.
\subsection{Proof of Theorem~\ref{thm:homology}~(b)} The image
of $\MM\times\{1\}$ under the evaluation map lies in $V\times\{1\}$,
which we may regard as a subset of $(V\times\C)\setminus\Int(W_0)$.
This gives us the commutative diagram
\begin{diagram}
\MM\times\{1\} & \rTo^{\ev} & (V\times\C)\setminus\Int(W_0) \\
\dTo^i         &            & \dTo_j \\
\MM\times\C    & \rTo^{\ev} & Z.
\end{diagram}
The inclusion map $i$ induces an isomorphism on fundamental
groups, and the proper degree~$1$ map $\ev$ at the bottom,
an epimorphism. The latter follows from the fact that
$\ev\co\MM\times\C\rightarrow Z$ factors through
the covering of $Z$ corresponding to the characteristic
subgroup $\ev_*\bigl(\pi_1(\MM\times\C)\bigr)\subset\pi_1(Z)$.
The covering map has degree equal to the index of
$\ev_*\bigl(\pi_1(\MM\times\C)\bigr)$ in $\pi_1(Z)$,
and the degree of proper maps is multiplicative under composition.

Hence, $j$ also induces an epimorphism on fundamental groups. Up
to deformation retraction, $j$ may be regarded as the inclusion map
$M\subset W$.

If $\pi_1(M)$ is abelian, so is its image $\pi_1(W)=j_*\bigl(\pi_1(M)\bigr)$,
in which case these fundamental groups equal the respective first homology
group. Then, by part (a) of the theorem, the inclusion $M\subset W$ is
$\pi_1$-isomorphic.
\subsection{A homology epimorphism for Liouville fillings}
\label{subsection:Liouville}
A close inspection of the proof of Theorem~\ref{thm:homology} shows
that the requirement that the filling $(W_0,\omega_0)$ carry
a Stein structure is not essential; the crucial point was the
$(W_0,\omega_0)$ is a split manifold $V_0\times D^2\subset V\times\C$,
where the `end' $V\setminus V_0$ is of the form $\R^+_0\times\partial V_0$
and has suitable convexity properties for an analogue
of Lemma~\ref{lem:spheres} to hold.

Recall the following definitions from \cite[Chapter~11]{ciel12}.

\begin{defn}
A \textbf{Liouville manifold}
is an exact symplectic manifold $(V,\rmd\lambda)$ such that
the corresponding Liouville vector field~$Y$, defined by $i_Y\rmd\lambda
=\lambda$ is complete, and $V$ has an exhaustion by compact domains
with smooth boundaries, along which $Y$ is outward pointing.
Such compact domains in $(V,\rmd\lambda)$ are called
\textbf{Liouville domains}.

Let $M$ be a compact, connected manifold with a cooriented
contact structure~$\xi$.
A compact symplectic manifold $(W,\omega)$ with boundary is called a
\textbf{Liouville filling} of $(M,\xi)$ if $\partial W=M$
as oriented manifolds and there is a global primitive $1$-form $\lambda$
for $\omega$ such that $\lambda|_{TM}$ is a contact form for~$\xi$.
\end{defn}

\begin{rem}
\label{rem:Liouville}
Whenever we restrict attention to a compact subset of
the Liouville manifold $(V,\rmd\lambda)$ --- for instance, when we
consider compactly supported symplectic isotopies --- we
may assume without loss of generality that the Liouville manifold
is of finite type, that is, it looks like the completion
\[ (V_0,\rmd\lambda)\cup_{\partial V_0}\bigl(\R_0^+\times\partial V_0,
\rmd(\rme^t\lambda_0)\bigr)\]
of a Liouville domain $(V_0,\rmd\lambda)$, where $\lambda_0:=
\lambda|_{T\partial V_0}$.
\end{rem}

\begin{thm}
\label{thm:homology-Liouville}
Let $(M,\xi)$ be a $(2n-1)$-dimensional compact, connected
contact manifold admitting 
a split symplectic filling $V_0\times D^2$ (with corners
rounded), where $(V_0,\rmd\lambda)$ is a Liouville domain.
Let $(W,\omega)$ be any symplectically aspherical filling
of $(M,\xi)$. Then there is a surjective homomorphism
\[ H_k(V_0)\longrightarrow H_k(W)\]
in all degrees $k\geq 0$.

If $V_0$ has the homotopy type of an $\ell_0$-dimensional complex
with $\ell_0\leq n-1$, then the other conclusions of Theorem~\ref{thm:homology}
hold true as well.
\end{thm}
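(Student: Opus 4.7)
The argument will parallel the proof of Theorem~\ref{thm:homology}, with the Liouville completion of $(V_0,\rmd\lambda)$ replacing the Cieliebak split model of a subcritical Stein manifold. First I would pass to the finite-type completion
\[ (V,\rmd\lambda) := (V_0,\rmd\lambda)\cup_{\partial V_0}\bigl(\R_0^+\times\partial V_0,\,\rmd(\rme^t\lambda_0)\bigr) \]
as in Remark~\ref{rem:Liouville}, and endow $V\times\C$ with the product symplectic form $\rmd\lambda+r\,\rmd r\wedge\rmd\theta/(1+r^2)^2$. Gluing $(W,\omega)$ to the complement $(V\times\C)\setminus\Int(V_0\times D^2)$ along $M$ and compactifying the $\C$-factor to $\CP^1$ yields the auxiliary symplectic manifold $(\hat{Z},\hat{\Omega})$ with its distinguished hypersurface $H:=V\times\{\infty\}$. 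I would then equip $\hat{Z}$ with a compatible almost complex structure $J$ that is generic in the sense of \cite{mcsa04} on $\Int(W)$ and of product form $J_V\oplus\rmi$ on $\hat{Z}\setminus\Int(W)$, where $J_V$ is a $\rmd\lambda$-compatible almost complex structure on $V$ which is cylindrical on the Liouville end: $\R$-invariant in~$t$, preserving $\xi_0:=\ker\lambda_0$, and sending $\partial_t$ to the Reeb vector field of~$\lambda_0$.

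With this setup I would establish the analogue of Lemma~\ref{lem:spheres} by combining two maximum principles. The classical one applied to $|z|^2$ near $H$ forces any $J$-holomorphic sphere to either meet $H$ transversely in a single point or avoid a neighbourhood of~$H$. The other is the exactness/Liouville maximum principle: on the region $(V\times\C)\setminus\Int(V_0\times D^2)$ the form $\hat{\Omega}$ is exact, so a closed $J$-holomorphic curve trapped there has vanishing $\hat{\Omega}$-energy; more generally, the $V$-component of a sphere entering the cylindrical end $\R_0^+\times\partial V_0$ is a closed $J_V$-holomorphic curve into the $\rmd\lambda$-exact target~$V$, hence must be constant. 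Together these give the trichotomy of Lemma~\ref{lem:spheres}: any non-constant $J$-holomorphic sphere is either contained in $\Int(W)$, or a branched cover of a fibre $\{v\}\times\CP^1$, or meets~$H$.

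Once this substitute for Lemma~\ref{lem:spheres} is in place, the remainder of the argument transcribes verbatim. The moduli space $\MM$ of spheres satisfying~(M1) and~(M2) is a smooth oriented manifold of dimension~$2n-2$; the evaluation map $\ev\co\MM\times\C\rightarrow Z$ is proper and of degree~$1$, since symplectic asphericity of $(W,\omega)$ still rules out bubbling inside~$W$ while positivity of intersection with~$H$ controls the non-compact end; and the shriek-map argument of Proposition~\ref{prop:hom-epi} delivers surjectivity of $\ev_*$. The commutative square from Section~\ref{subsection:proof-homology-a}, with top row $\MM\times\{1\}\to V\times\{1\}\simeq V_0$, then produces the claimed surjection $H_k(V_0)\to H_k(W)$. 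Under the additional hypothesis $\ell_0\leq n-1$, the proofs of Lemma~\ref{lem:relative} and of parts~(a) and~(b) of Theorem~\ref{thm:homology} depend only on this homological epimorphism together with Poincar\'e--Lefschetz duality on the compact filling~$W$, so they apply without change.

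The main technical point is the Liouville maximum principle, ensuring that a non-constant $J$-holomorphic sphere whose $V$-component meets the cylindrical end is in fact constant in~$V$. This is a standard symplectic field theory fact, resting on the choice of cylindrical $J_V$ and the exactness of $\rmd\lambda=\rmd(\rme^t\lambda_0)$ on the symplectisation; some care is required to ensure that the transition of $J$ across the gluing locus $M$ does not spoil these estimates, but no conceptually new ingredient beyond the arguments of~\cite{geze12} is needed.
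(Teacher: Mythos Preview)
Your proposal is correct and follows the same route as the paper: pass to the Liouville completion $V$ of~$V_0$, rebuild $\hat Z$, and establish the analogue of Lemma~\ref{lem:spheres} via separate maximum principles in the $V$- and $\C$-directions on the end region $(V\times\CP^1)\setminus(V_0\times D^2_\rho)$, after which Propositions~\ref{prop:ev} and~\ref{prop:hom-epi} and Section~\ref{subsection:proof-homology-a} carry over verbatim. One small point of phrasing: the single transverse intersection with~$H$ is a consequence of positivity of intersection together with the homology condition~(M1), not of the $|z|^2$ maximum principle itself; with that adjustment your sketch matches the paper's argument.
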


\begin{proof}
In the proof of Theorem~\ref{thm:homology} above, we now
take $V$ to be the completion of~$V_0$. In the analogue
of Lemma~\ref{lem:spheres}, we work with $\End:=(V\times\CP^1)\setminus
(V_0\times D^2_{\rho})$ for a disc $D^2_{\rho}\subset\C\subset\CP^1$
of suitable radius~$\rho$. For (i), it is sufficient to know
that $V$ is exact symplectic. For (ii) --- assuming
$u(\CP^1)$ intersects $\End$ but not the hypersurface~$H$ ---
one applies the maximum principle to the $V$-component of~$u$,
if that component intersects $V\setminus V_0$, or to the $\C$-component,
if that hits $\C\setminus D^2_{\rho}$. The proof of (iii) is analogous.
Then the argument as in Section~\ref{subsection:proof-homology-a}
yields the claimed homology epimorphism.

Under the additional homotopical assumption on~$V_0$, Lemma~\ref{lem:relative}
still holds, and then one concludes as in the proof of
Theorem~\ref{thm:homology}.
\end{proof}
\section{Applications of Theorem~\ref{thm:homology}}
\label{section:applications}
\subsection{Fillings of the standard sphere}
\label{subsection:sphere}
Theorem \ref{thm:EFM} from the Introduction is contained in
Theorem~\ref{thm:homology}. Indeed, $(S^{2n-1},\xist)$ has
a Stein filling given by the unit ball in $\C^n$, so
we have $\ell_0=0$ in the notation of Theorem~\ref{thm:homology}.
This theorem then says that any other symplectically aspherical
filling $(W,\omega)$ of $(S^{2n-1},\xist)$ is a simply connected
homology ball of dimension $2n\geq 6$, and hence diffeomorphic to the
standard ball by Proposition~A on page 108 of~\cite{miln65}.

\subsection{Liouville fillings}
Using relative de Rham theory, we now derive a general
property of the fillings that can arise in the situation of
Theorem~\ref{thm:homology}.

\begin{prop}
Let $(M,\xi)$ be a contact manifold that admits a subcritical Stein
filling. Then any symplectically aspherical filling
$(W,\omega)$ of $(M,\xi)$ is a Liouville filling.
\end{prop}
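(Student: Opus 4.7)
The strategy is to exhibit a global primitive $\lambda$ of~$\omega$ on~$W$ whose restriction to~$M$ is a contact form for~$\xi$, thereby verifying the definition of Liouville filling. This splits naturally into two tasks: (i) showing that $\omega$ is exact on~$W$, and (ii) adjusting any chosen primitive so that it agrees near~$M$ with the Liouville primitive provided by the strong filling data.

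For (i), I would pair $[\omega]\in H^2_{\mathrm{dR}}(W)$ against an arbitrary 2-cycle and invoke Theorem~\ref{thm:homology}(a). When $\ell_0\leq 1$ we already have $H_2(W;\R)=0$, so $[\omega]=0$ by de Rham duality. When $2\leq\ell_0\leq n-1$, the theorem supplies an isomorphism $H_2(M;\R)\cong H_2(W;\R)$ induced by the inclusion $M\subset W$, so every real 2-cycle in~$W$ is homologous to one carried by~$M$. Since the strong filling condition provides an outward Liouville vector field~$Y$ near~$M$ with $\alpha:=(i_Y\omega)|_{TM}$ a contact form for~$\xi$ and $\omega|_{TM}=\rmd\alpha$, Stokes' theorem shows that $[\omega]$ pairs trivially with every 2-cycle, hence vanishes. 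Note that symplectic asphericity alone would kill only the spherical part of~$H_2$, so Theorem~\ref{thm:homology} is doing the essential work here.

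For (ii), I start with any primitive $\lambda_0$ of~$\omega$ and the Liouville primitive $\lambda_1=i_Y\omega$ defined on a collar $U$ of~$M$, so that $\lambda_1|_{TM}=\alpha$. The difference $\beta:=\lambda_0-\lambda_1$ is closed on~$U$. Combining Poincar\'e duality with Lemma~\ref{lem:relative} gives $H^k(W,M;\R)\cong H_{2n-k}(W;\R)=0$ for $k=1,2$ (using $\ell_0\leq n-1$), whence the restriction $H^1(W;\R)\to H^1(M;\R)$ is an isomorphism. Lifting $[\beta|_U]$ to a closed 1-form $\gamma$ on~$W$, I have $\beta-\gamma|_{U'}=\rmd f$ on a smaller collar~$U'$; extending $f$ by a cut-off to a function $\tilde f$ on~$W$, the 1-form $\lambda:=\lambda_0-\gamma-\rmd\tilde f$ is a global primitive of~$\omega$ coinciding with $\lambda_1$ on a neighbourhood of~$M$, so $\lambda|_{TM}=\alpha$ is a contact form for~$\xi$ as required.

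The main conceptual point is step~(i): the subcritical Stein hypothesis enters only through Theorem~\ref{thm:homology}, and without it symplectic asphericity would be insufficient to conclude global exactness of~$\omega$. Step~(ii) is essentially bookkeeping once the cohomological vanishing from Lemma~\ref{lem:relative} is in hand, and the orientation condition $\partial W=M$ as oriented manifolds is built into the strong filling setup.
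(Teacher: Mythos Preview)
Your argument is correct and uses the same cohomological input as the paper, namely the vanishing of $H^2(W,M;\R)\cong H_{2n-2}(W;\R)$ coming from Theorem~\ref{thm:homology}(a). The difference is one of packaging: the paper works directly in the relative de Rham complex $\Omega^k(i)=\Omega^k(W)\oplus\Omega^{k-1}(M)$, observes that the pair $(\omega,\alpha)$ is a closed relative $2$-cochain, and uses $H^2_{\mathrm{dR}}(W,M)=0$ to write $(\omega,\alpha)=(\rmd\mu,\, i^*\mu-\rmd f)$ in one stroke; then $\lambda:=\mu-\rmd F$ does the job. Your two-step decomposition---first $[\omega]=0$ via surjectivity of $H_2(M;\R)\to H_2(W;\R)$, then adjusting the primitive via surjectivity of $H^1(W;\R)\to H^1(M;\R)$---is exactly what one obtains by unpacking the relative de Rham argument through the long exact sequence of the pair. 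The paper's route is more economical; yours is more elementary in that it avoids the relative de Rham formalism, and it makes explicit where each piece of cohomological vanishing enters. (A small remark: in step~(ii) you only need surjectivity of $H^1(W;\R)\to H^1(M;\R)$, i.e.\ $H^2(W,M;\R)=0$; the additional vanishing of $H^1(W,M;\R)$ is not required.)
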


\begin{proof}
Write $i\co M\rightarrow W$ for the inclusion map. The $k$th relative
de Rham chain group of the pair $(W,M)$ is given by
$\Omega^k(i)=\Omega^k(W)\oplus\Omega^{k-1}(M)$, and the
differential is given by $\rmd(\eta,\mu)=(\rmd\eta, i^*\eta-\rmd\mu)$,
see~\cite[p.~78]{botu82}.

By assumption on $(W,\omega)$ being a symplectic filling of $(M,\xi)$,
there is a Liouville vector field $Y$ defined near $M$ and
pointing transversely outwards such that the $1$-form $\lambda:=i_Y\omega$,
defined near $M$, restricts to a contact form $\alpha:=i^*\lambda$ for~$\xi$.
We want to show that we can find a global primitive $\lambda$ with this
property.

The pair $(\omega,\alpha)$ is closed, since $\rmd(\omega,\alpha)=
(\rmd\omega,i^*\omega-\rmd\alpha)=(0,0)$, so this pair defines
a class in $H^2_{\mathrm{dR}}(W,M)\cong H_{2n-2}(W;\R)$, where as before
we take the dimension of $W$ to be $2n$.
By Theorem~\ref{thm:homology}, this last homology group vanishes,
so the pair $(\omega,\alpha)$ is actually exact. This means we can
find a $1$-form $\mu$ on $W$ and a smooth function $f$ on $M$ such that
$(\omega,\alpha)=(\rmd\mu,i^*\mu-\rmd f)$.

Extend $f$ to a smooth function $F$ on $W$ and set $\lambda:=\mu-\rmd F$.
Then $\rmd\lambda=\omega$ and $i^*\lambda=i^*\mu-\rmd f=\alpha$.
\end{proof}

\subsection{A result of Oancea--Viterbo}
\label{section:OV}
The following is Theorem~2.6 of \cite{oavi12}.

\begin{thm}[Oancea--Viterbo]
\label{thm:OV}
Let $(M_1,\xi_1)$ be a compact, connected contact manifold admitting
an embedding into a subcritical Stein manifold as a hypersurface of
contact type. Let $(W_1,\omega_1)$ be any symplectically aspherical
filling of $(M_1,\xi_1)$ satisfying one of the following conditions:
\begin{itemize}
\item[(i)] $H_2(W_1,M_1)=0$;
\item[(ii)] $M_1$ is simply connected.
\end{itemize}
Then the homomorphism
\[ H_k(M_1)\longrightarrow H_k(W_1)\]
induced by the inclusion $M_1\subset W_1$ is surjective in all degrees~$k$.
\end{thm}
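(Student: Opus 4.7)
The plan is to adapt the evaluation-map machinery of Section~\ref{section:proof-homology} by cutting the ambient subcritical Stein manifold along $M_1$, replacing the inner region by the filling $W_1$, and running the holomorphic sphere argument on the resulting symplectic manifold.

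First I would invoke Cieliebak's theorem to put the ambient subcritical Stein manifold in split form $(V\times\C,J_V\oplus\rmi)$ with plurisubharmonic potential $\psi(v,z)=\psi_V(v)+\frac{1}{4}\log(1+|z|^2)$. Since $V$ has the homotopy type of a CW complex of real dimension at most $n-1$, the group $H_{2n-1}(V\times\C)$ vanishes by K\"unneth, so $M_1$ is automatically separating in $V\times\C$ and bounds a compact inner region $W_0$ --- a strong, generally non-Stein, symplectic filling of $(M_1,\xi_1)$. I would then form
\[ (Z_1,\Omega_1):=(W_1,\omega_1)\cup_{(M_1,\xi_1)}\Bigl((V\times\C)\setminus\Int(W_0),\omega_V+\frac{r\,\rmd r\wedge\rmd\theta}{(1+r^2)^2}\Bigr), \]
compactify the $\C$-factor to $\CP^1$ to produce $\hat{Z}_1$, and equip it with a compatible almost complex structure $J$, generic on $\Int(W_1)$ and equal to $J_V\oplus\rmi$ outside.

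With these preparations the analogues of Lemma~\ref{lem:spheres} and Proposition~\ref{prop:ev} go through --- the maximum principle on the outer region works because it sits inside the exact symplectic manifold $V\times\C$ --- yielding a moduli space $\MM$ of $J$-holomorphic spheres satisfying (M1) and (M2), together with a proper degree-$1$ evaluation map $\ev\co\MM\times\C\to Z_1$. Proposition~\ref{prop:hom-epi} then gives that $\ev_*\co H_k(\MM\times\C)\to H_k(Z_1)$ is surjective, and the commutative square
\begin{diagram}
H_k(\MM\times\{1\}) & \rTo^{\ev_*} & H_k(V\times\{1\}) \\
\dTo^{\cong}        &              & \dTo              \\
H_k(\MM\times\C)    & \rTo^{\ev_*} & H_k(Z_1)
\end{diagram}
of Section~\ref{subsection:proof-homology-a} forces the inclusion-induced map $H_k(B)\to H_k(Z_1)$ to be surjective, where $B:=(V\times\C)\setminus\Int(W_0)$. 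Feeding this into the Mayer--Vietoris sequence for $Z_1=W_1\cup B$ with intersection (up to homotopy) $M_1$, given $\gamma\in H_k(W_1)$ with image $\bar\gamma\in H_k(Z_1)$ I choose $\eta\in H_k(B)$ mapping to $\bar\gamma$; then $(\gamma,\eta)$ lies in the kernel of $H_k(W_1)\oplus H_k(B)\to H_k(Z_1)$, hence in the image of $H_k(M_1)$, and the $H_k(W_1)$-component of a preimage is a class in $H_k(M_1)$ that maps to~$\gamma$.

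The main obstacle is to verify that the glued form $\Omega_1$ on $Z_1$ is symplectically aspherical, and this is precisely where the two alternative hypotheses enter. Under~(i), the vanishing $H_2(W_1,M_1)=0$ forces $H^2_{\mathrm{dR}}(W_1,M_1)=0$ via the universal coefficient theorem, so the closed relative de~Rham pair $(\omega_1,\alpha_1)$ is exact, producing a global Liouville primitive for $\omega_1$ that matches $\alpha_1$ on $M_1$ and hence makes $\Omega_1$ exact on all of~$Z_1$. Under~(ii), for any $f\co S^2\to Z_1$ I would perturb $f$ to be transverse to $M_1$, write $f^{-1}(M_1)$ as a finite disjoint union of circles, use simple connectivity of $M_1$ to cap off these circles inside~$M_1$, and thereby decompose $\int_{S^2}f^*\Omega_1$ into a sum of integrals over spheres lying in $W_1$, in $B$, or in $M_1$, which vanish respectively by asphericity of $\omega_1$, exactness of the Stein form on $B$, and the fact that $\Omega_1|_{TM_1}=\rmd\alpha_1$ is exact. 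With asphericity of $\Omega_1$ in hand, the rest of the argument is a mechanical transposition of Sections~\ref{subsection:completion}--\ref{subsection:proof-homology-a}.
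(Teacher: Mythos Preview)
Your proposal is correct and parallels the paper's argument closely, though organised differently. The paper introduces an auxiliary high level set $M$ of~$\psi$ enclosing~$M_1$, forms the compact filling $W=W_1\cup_{M_1}W_2$ of $(M,\xi)$ (with $W_2$ the exact cobordism between $M_1$ and~$M$), checks symplectic asphericity of $W$ via the same splitting argument you use, applies Theorem~\ref{thm:homology} to $W$ as a black box, and finishes with a Mayer--Vietoris chase on $W=W_1\cup W_2$ that splits into the cases $k\geq n$ and $k\leq n-1$. You instead run the moduli-space machinery directly on $\hat{Z}_1$ and do Mayer--Vietoris on $Z_1=W_1\cup B$; this avoids the auxiliary level set and the case split, at the cost of unpacking the proof of Theorem~\ref{thm:homology} rather than quoting it. The two routes produce the same manifold (your $Z_1$ is the paper's~$Z$) and the same diagram chase at the end.

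One small point deserves care: your justification that ``the maximum principle on the outer region works because it sits inside the exact symplectic manifold $V\times\C$'' covers parts (i) and (iii) of Lemma~\ref{lem:spheres}, but not part~(ii) verbatim, since that argument uses that the boundary hypersurface is a \emph{level set} of~$\psi$ (so that the collar inside the filling is a genuine sublevel set), and $M_1$ need not be one. This is harmless for your purposes, however: once you have established symplectic asphericity of $(Z_1,\Omega_1)$, any bubble component disjoint from $H$ lies entirely in $Z_1$ and therefore has zero $\Omega_1$-energy, hence is constant --- so Proposition~\ref{prop:ev} still follows, with asphericity of $Z_1$ standing in for the appeal to Lemma~\ref{lem:spheres}(ii). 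The paper's detour through the level set $M$ is precisely what lets it invoke that lemma as stated.
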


\begin{rem}
(1) The indexing $1$ is used here merely to avoid notational
confusion when we reprove this result below.

(2) In case (i), symplectic asphericity of any filling $(W_1,\omega_1)$
of $(M_1,\xi_1)$ is a direct consequence of the homological assumption
$H_2(W_1,M_1)=0$, for the homomorphism $H_2(M_1)\rightarrow H_2(W_1)$
induced by inclusion is then surjective, and $\omega_1$ is exact near~$M_1$.

(3) If $W_1$ is itself Stein (not \emph{a priori} subcritical), then
condition (i) is automatic for $\dim W_1=2n\geq 6$, since
$W_1$ then has the homotopy type of an $n$-dimensional complex,
and so $H_2(W_1,M_1)\cong H^{2n-2}(W_1)$ is zero for $2n-2>n$.
\end{rem}

We now prove Theorem~\ref{thm:OV}.
As in Section~\ref{subsection:completion}
we have a contact type embedding of $(M_1,\xi_1)$ into a split
Stein manifold $V\times\C$. Then $M_1$ separates $V\times\C$ into
a compact and a non-compact component. By assumption, there
is a Liouville vector field for the symplectic form on $V\times\C$,
defined near and transverse to~$M_1$;
by \cite[Theorem~3.4]{geze12}, this Liouville vector field
points out of the compact component.

Choose a level set $M$ of
the plurisubharmonic function $\psi$ on $V\times\C$ for a level
so large that $M_1$ is contained in its sublevel set. Equip $M$
with the contact structure $\xi$ induced by the Stein structure.
Then the compact region between $M_1$ and $M$ defines a symplectic
cobordism $(W_2,\omega_2=\rmd\lambda_2)$ from $(M_1,\xi_1)$ to $(M,\xi)$.

Now, given a symplectically aspherical filling $(W_1,\omega_1)$ of
$(M_1,\xi_1)$, we can glue it along this contact boundary to the cobordism
$(W_2,\omega_2)$, resulting in a filling $(W,\omega)$
of the contact manifold $(M,\xi)$. The corresponding sublevel set of $\psi$
defines a subcritical Stein filling of $(M,\xi)$, so the first
assumption of Theorem~\ref{thm:homology} is satisfied.

\begin{lem}
\label{lem:i-ii-aspherical}
Under either of the assumptions (i) or (ii)
in Theorem~\ref{thm:OV}, $(W,\omega)$ is symplectically aspherical.
\end{lem}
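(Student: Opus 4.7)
The plan is to show $\int_{S^2} u^* \omega = 0$ for every smooth map $u\co S^2 \to W$, which is equivalent to symplectic asphericity. Perturb $u$ to be transverse to the separating hypersurface $M_1 \subset W$, so that $C := u^{-1}(M_1)$ is a closed $1$-submanifold of $S^2$ dividing it into planar surfaces $R_1 := u^{-1}(W_1)$ and $R_2 := u^{-1}(W_2)$. On $W_2$ the symplectic form is exact, $\omega|_{W_2} = \rmd\lambda_2$, so Stokes' theorem gives
\[
\int_{R_2} u^* \omega = \int_{\partial R_2} u^* \lambda_2 = -\int_C u^*(\lambda_2|_{TM_1}),
\]
with $C$ oriented as $\partial R_1$. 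The goal is to cancel this against the $R_1$-contribution, and the mechanism differs in the two cases.

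For case (ii), $M_1$ is simply connected, so every boundary circle of a component $R$ of $R_1$ bounds a disk in $M_1$ under $u$. Capping off each boundary circle of $R$ with such a disk yields a $2$-sphere $\tilde R$ (any planar surface becomes a sphere after all of its boundary circles are capped) and an extended map $\tilde u\co\tilde R\to W_1$. Symplectic asphericity of $\omega_1$ gives $\int_{\tilde R}\tilde u^*\omega_1 = 0$, and since the caps lie in $M_1$ with $\omega_1|_{TM_1} = \rmd\alpha_1$, applying Stokes to the caps yields $\int_R u^*\omega_1 = \int_{\partial R} u^*\alpha_1$. Summing over components, $\int_{R_1} u^*\omega = \int_C u^*\alpha_1$, which cancels the $R_2$-contribution under the gluing normalisation $\alpha_1 = \lambda_2|_{TM_1}$.

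For case (i), the long exact sequence of $(W_1, M_1)$ together with $H_2(W_1, M_1) = 0$ forces $H_2(M_1) \to H_2(W_1)$ to be surjective and $H_1(M_1) \to H_1(W_1)$ to be injective. Surjectivity, combined with the exactness $\omega_1|_{TM_1} = \rmd\alpha_1$, shows that $[\omega_1]$ annihilates every class in $H_2(W_1;\R)$, so $[\omega_1] = 0$ in $H^2(W_1;\R)$ by the universal coefficient theorem. Fix a primitive $\omega_1 = \rmd\mu_1$; Stokes then gives $\int_{R_1} u^*\omega = \int_C u^*\mu_1$, and adding the $R_2$-contribution,
\[
\int_{S^2} u^*\omega = \int_C u^*\beta, \qquad \beta := (\mu_1 - \lambda_2)|_{TM_1},
\]
a closed $1$-form on $M_1$ by construction. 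The cycle $u|_C$ bounds the $2$-chain $u(R_1)$ in $W_1$, hence $u_*[C] = 0$ in $H_1(W_1)$; injectivity forces $u_*[C] = 0$ also in $H_1(M_1;\R)$, and de~Rham's theorem yields $\int_C u^*\beta = 0$.

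The main technical subtlety is ensuring that the two contact forms $\alpha_1$ (from the outward Liouville vector field on $W_1$) and $\lambda_2|_{TM_1}$ (from the cobordism side) actually coincide on $M_1$: both define the contact structure~$\xi_1$ but \emph{a priori} differ by a positive factor, which is reconciled by the standard Liouville rescaling underlying strong symplectic gluing. Once this normalisation and the orientation bookkeeping in the capping and Stokes steps are settled, both cases follow from the computation above.
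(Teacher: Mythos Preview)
Your argument is correct. Case~(ii) is essentially the paper's proof: both cap off the boundary circles of $R_1$ and $R_2$ by discs in $M_1$ and invoke the asphericity of $(W_1,\omega_1)$ and $(W_2,\omega_2)$; the paper simply observes that the same caps (with opposite orientations) close off $S_1$ and $S_2$ and hence cancel, which is your $\alpha_1=\lambda_2|_{TM_1}$ step without unpacking it.

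For case~(i) you take a genuinely different route. The paper uses the hypothesis $H_2(W_1,M_1)=0$ directly on the relative cycle~$S_1$: there is a $3$-chain $C$ in $W_1$ with $\partial C = S_1\cup\Sigma$ for some $2$-chain $\Sigma\subset M_1$ with $\partial\Sigma=-\partial S_1$; a single application of Stokes on $C$ gives $\int_{S_1}\omega_1=-\int_{\Sigma}\omega_1=-\int_{\Sigma}\rmd\lambda_2$, and one more Stokes on $\Sigma$ matches this against the $S_2$-contribution. Your version instead extracts two consequences of the long exact sequence: surjectivity of $H_2(M_1)\to H_2(W_1)$ forces $\omega_1$ to be globally exact, and then injectivity of $H_1(M_1)\to H_1(W_1)$ kills the residual term $\int_C u^*\beta$. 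Both are valid; the paper's $3$-chain argument is shorter and avoids having to produce a global primitive, while your argument has the side benefit of making explicit that $\omega_1$ is exact (a fact the paper records separately in a remark).
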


\begin{proof}
Let $S$ be a $2$-sphere in $W$ (i.e.\ a map $\phi\co S^2\rightarrow W$,
without loss of generality assumed to be smooth, with
image~$S$). We need to show that $\int_S\omega:=\int_{S^2}\phi^*\omega=0$.
The part $(W_1,\omega_1)$ of $(W,\omega)$ is symplectically aspherical by
assumption; $(W_2,\omega_2)$ is symplectically aspherical since
$\omega_2=\rmd\lambda_2$ is
exact. So $S$ cannot be entirely contained in only one of these two parts.

Make $S$ transverse to $M_1$ and write $S_1,S_2$ for the parts
of $S$ contained in $W_1,W_2$, respectively.

(i) If $H_2(W_1,M_1)=0$, the relative cycle $S_1$ represents the zero class,
so there is a relative $3$-chain $C$ with $\partial C=S_1\cup\Sigma$,
where $\Sigma$ is a $2$-chain in $M_1$ with $\partial\Sigma=-\partial S_1$.
It follows that
\[ \int_{S_1}\omega+\int_{\Sigma}\omega=\int_{\partial C}\omega=
\int_C\rmd\omega=0,\]
and further
\[ \int_{S_1}\omega=-\int_{\Sigma}\omega=-\int_{\Sigma}\rmd\lambda_2=
-\int_{\partial\Sigma}\lambda_2=-\int_{\partial S_2}\lambda_2=
-\int_{S_2}\omega_2.\]
This gives $\int_S\omega=0$.

(ii) If $M_1$ is simply connected, then $S_1$ and $S_2$ can be closed
off to $2$-spheres $\hat{S}_1,\hat{S}_2$ by $2$-discs in~$M_1$. Then
\[ \int_S\omega=\int_{S_1}\omega_1+\int_{S_2}\omega_2=
\int_{\hat{S}_1}\omega_1+\int_{\hat{S}_2}\omega_2=0\]
by the symplectic asphericity of $(W_1,\omega_1)$ and $(W_2,\omega_2)$.
\end{proof}

Thanks to this lemma, we can use the information
from Theorem~\ref{thm:homology}~(a) as input in the Mayer--Vietoris
sequence of the decomposition $W=W_1\cup W_2$. For $k\geq n$
we have $H_k(W)=0$, and hence the exact sequence
\[ H_k(M_1)\longrightarrow H_k(W_1)\oplus H_k(W_2)\longrightarrow 0;\]
in particular, the homomorphism $H_k(M_1)\rightarrow H_k(W_1)$ is surjective.

For $k\leq n-1$ we still have that the homomorphism $H_k(M)\rightarrow
H_k(W)$ is surjective. Consider the commutative diagram
\begin{diagram}
         &                  &          &        &          & 
                        & H_k(M) \\
         &                        &          &        &          &
   \ldTo^{i_M}    & \dOnto_{j_M} \\  
H_k(M_1) & \rTo^{(i_1,i_2)} & H_k(W_1) & \oplus & H_k(W_2) & 
   \rTo^{j_1-j_2} & H_k(W),
\end{diagram}
where all homomorphisms are induced by inclusion maps,
the row is exact, and the vertical homomorphism is  surjective. We want to
show that the homomorphism $i_1$ is surjective.
Given a class $a_1\in H_k(W_1)$, set $a:=j_1(a_1)\in H_k(W)$.
Choose $A\in H_k(M)$ with $j_M(A)=a$, and set $a_2:=i_M(A)\in H_k(W_2)$,
so that $j_2(a_2)=a$. It follows that
$(a_1,a_2)\in H_k(W_1)\oplus H_k(W_2)$ maps to zero under
$j_1-j_2$, and so this pair lies in the image of $(i_1,i_2)$.
This shows that $i_1$ is an epimorphism, which completes the proof
of Theorem~\ref{thm:OV}.
\subsection{Extension of the Oancea--Viterbo result to $\pi_1$}
Using Theorem~\ref{thm:homology}~(b), we can formulate a result
analogous to Theorem~\ref{thm:OV} for the fundamental group.

\begin{prop}
Under the assumptions of Theorem~\ref{thm:OV}, the normal
subgroup $\NN\bigl(i_{1\#}(\pi_1(M_1))\bigr)$ generated by the image
of $\pi_1(M_1)$  in $\pi_1(W_1)$ equals the full group $\pi_1(W_1)$.
\end{prop}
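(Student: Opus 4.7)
The plan is to reuse the decomposition $W=W_1\cup_{M_1}W_2$ constructed in the proof of Theorem~\ref{thm:OV}, combine the $\pi_1$-surjectivity statement of Theorem~\ref{thm:homology}~(b), applied to the enlarged filling~$(W,\omega)$, with a Seifert--van~Kampen computation.

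First I would note that, in the construction of Section~\ref{section:OV}, the outer boundary $M=\partial W$ is contained entirely in the cobordism~$W_2$, so the inclusion $M\hookrightarrow W$ factors through~$W_2$. By Lemma~\ref{lem:i-ii-aspherical} the filling $(W,\omega)$ of $(M,\xi)$ is symplectically aspherical, and by construction it admits the subcritical Stein filling given by the relevant sublevel set of~$\psi$. Hence Theorem~\ref{thm:homology}~(b) applies and gives a surjection $\pi_1(M)\twoheadrightarrow\pi_1(W)$; consequently $\pi_1(W_2)\to\pi_1(W)$ is surjective as well.

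Next I would apply Seifert--van~Kampen to a thickening of the decomposition $W=W_1\cup_{M_1}W_2$ to obtain
\[
  \pi_1(W)\;\cong\;\pi_1(W_1)*_{\pi_1(M_1)}\pi_1(W_2).
\]
Set $N_1:=\NN\bigl(i_{1\#}(\pi_1(M_1))\bigr)\subset\pi_1(W_1)$. The universal property of the amalgamated free product identifies the quotient of $\pi_1(W)$ by the normal closure of the image of $\pi_1(W_2)$ with $\pi_1(W_1)/N_1$, since a homomorphism out of that quotient to any group $H$ is the same as a homomorphism $\pi_1(W_1)\to H$ whose composition with $i_{1\#}$ is trivial. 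On the other hand, the surjectivity of $\pi_1(W_2)\to\pi_1(W)$ makes that same quotient trivial. Comparing the two computations gives $\pi_1(W_1)=N_1$, which is the desired conclusion.

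The argument is essentially formal once Theorem~\ref{thm:homology} has been applied to~$W$. The one mildly technical point is ensuring that the outer level set $M$ is connected, so that both Theorem~\ref{thm:homology} and Seifert--van~Kampen apply verbatim; this can be arranged by taking $M=\{\psi=c\}$ for a regular value~$c$ large enough that $W_0$ has connected boundary.
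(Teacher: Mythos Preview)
Your argument is correct and follows the same route as the paper: both reduce to the surjectivity of $\pi_1(W_2)\to\pi_1(W)$ via Theorem~\ref{thm:homology}~(b), combined with the Seifert--van~Kampen decomposition $\pi_1(W)\cong\pi_1(W_1)*_{\pi_1(M_1)}\pi_1(W_2)$. The only cosmetic difference is in the final step: the paper attaches $2$-cells to $W_2$ to kill $\pi_1(W_2)$ topologically and then reapplies Seifert--van~Kampen, whereas you carry out the equivalent algebraic manoeuvre of quotienting the amalgamated product by the normal closure of the image of $\pi_1(W_2)$ and invoking the universal property; both yield $\pi_1(W_1)/N_1\cong\{1\}$.
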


\begin{proof}
By Lemma~\ref{lem:i-ii-aspherical} we may apply
Theorem~\ref{thm:homology}~(b) to the symplectic manifold
$(W=W_1\cup W_2,\omega)$ constructed in the preceding section.
Thus, we know that the homomorphism $\pi_1(M)\rightarrow \pi_1(W)$ is
surjective. This homomorphism factors through $\pi_1(W_2)$, so
$\pi_1(W_2)\rightarrow\pi_1(W)$ is likewise surjective. Here
all fundamental groups are taken with a base point $*\in M$,
but the last epimorphism continues to hold when we switch to a base point
$*_1\in M_1$. From now on, this base point $*_1$ will be understood.

By Seifert--van Kampen, the fundamental group $\pi_1(W)$ is
an amalgamated product
\begin{diagram}
           &       & \pi_1(W_1) &         &  \\
           & \ruTo &            & \rdTo   &  \\
\pi_1(M_1) &       & \rTo       &         & \pi_1(W)=\pi_1(W_1)*_{\pi_1(M_1)}
                                            \pi_1(W_2).\\
           & \rdTo &            & \ruOnto &  \\
           &       & \pi_1(W_2) &         &
\end{diagram}

Form a $CW$ complex $W_2'$ from $W_2$ by attaching discs to loops
in $W_1\setminus M_1$ freely homotopic to a set of generators of
$\pi_1(W_2)$. Then $\pi_1(W_2')=\{1\}$ and $W_1\cap W_2'=M_1$.
Moreover, since the homomorphism $\pi_1(W_2)\rightarrow\pi_1(W)$ is
surjective, the space $W_1\cup W_2'$ is simply connected.
With Seifert--van Kampen we have
\[ \{1\}=\pi_1(W_1\cup W_2')=\pi_1(W_1)*_{\pi_1(M_1)}\{1\}=
\pi_1(W_1)/\NN\bigl(i_{1\#}(\pi_1(M_1))\bigr).\qedhere\]
\end{proof}
\subsection{Milnor fillable contact structures}
Let $f\co(\C^{n+1},0)\rightarrow(\C,0)$, $n\geq 3$,
be a complex polynomial function
with an isolated singularity at the origin, i.e.\ an isolated
common zero of the partial derivatives $\partial_{z_j}f$,
$j=0,\ldots,n$. Choose $\varepsilon>0$ sufficiently small
so that the $\varepsilon$-disc around the origin in $\C^{n+1}$
does not contain any further singularities of~$f$. Then the
link $M_f$ of the singularity,
\[ M_f:=S^{2n+1}_{\varepsilon}\cap\{f=0\},\]
is a $(2n-1)$-dimensional manifold with contact structure
$\xi_f$ given by the complex tangencies,
\[ \xi_f:=TM_f\cap\rmi(TM_f),\]
see~\cite{cnp06}. For $\delta\in\C^*$ with
$|\delta|$ sufficiently small, the smoothing
\[ W_f:=D^{2n+2}_{\varepsilon}\cap\{f=\delta\}\]
with its canonical Stein symplectic structure $\omega_f$ is,
by Gray stability of contact structures, a Stein filling of
$(M_f,\xi_f)$. We call this Stein manifold, which is unique up
to deformation equivalence, the \emph{Milnor filling} of $(M_f,\xi_f)$.
(In \cite{cnp06}, that name refers to the singular filling.)

The \emph{Milnor number} $\mu$ is the degree of the map
\[ \begin{array}{ccc}
S^{2n+1}_{\varepsilon} & \longrightarrow & S^{2n+1}_{\varepsilon}\\
\bfz                   & \longmapsto     & g(\bfz)/|g(\bfz)|,
\end{array}\]
where $g:=(\partial_{z_0}f,\ldots,\partial_{z_n}f)$.
This number is always non-negative, and it equals zero precisely
when the origin is actually a non-singular point of~$f$.

\begin{prop}
Suppose the contact manifold $(M_f,\xi_f)$ admits a subcritical Stein
filling $(W_0,\omega_0)$. Then the following holds:
\begin{itemize}
\item[(i)] $W_0$ and $W_f$ are diffeomorphic to the disc $D^{2n}$.
\item[(ii)] The  Stein structure on the Milnor filling $W_f$ is the
standard Stein structure on the disc.
\item[(iii)] $(M_f,\xi_f)$ is contactomorphic to $(S^{2n-1},\xist)$.
\end{itemize}
\end{prop}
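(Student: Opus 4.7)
The plan is to apply Theorem~\ref{thm:homology} first to the Milnor filling itself. Since $W_f$ is Stein it is symplectically aspherical, and Milnor's classical bouquet theorem gives that $W_f$ is homotopy equivalent to a wedge of $\mu$ copies of~$S^n$, so $H_n(W_f)\cong\Z^\mu$. The subcritical hypothesis gives $\ell_0\leq n-1<n$, so Theorem~\ref{thm:homology}(a) forces $H_n(W_f)=0$; therefore $\mu=0$. This is the crucial input: as recalled just before the proposition, $\mu=0$ means the origin is a regular point of~$f$.

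Once $\mu=0$ is in hand, $\{f=0\}$ and $\{f=\delta\}$ are smooth complex hypersurfaces through and near the origin, transverse to the spheres $S^{2n+1}_\varepsilon$ for $\varepsilon$ and $|\delta|$ small enough. A local biholomorphic change of coordinates straightens $f$ to a coordinate projection $z_0$, sending $\{f=\delta\}$ to the affine hyperplane $\{z_0=\delta\}$. In these coordinates $W_f$ sits inside a small strictly pseudoconvex domain obtained from the round ball, and $M_f$ is its boundary. A Gray-stability argument, applied to the family of strictly pseudoconvex hypersurfaces interpolating between the distorted sphere and a round sphere, identifies $(M_f,\xi_f)$ with $(S^{2n-1},\xist)$, giving~(iii). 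The Stein structure on $W_f$ is cut out by the restriction of the standard plurisubharmonic function $|z|^2$ to the affine hyperplane, which is a reparametrisation of the standard plurisubharmonic function on~$\C^n$, giving~(ii) together with $W_f\cong D^{2n}$ in~(i).

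It remains to identify $W_0$. From~(iii), $\pi_1(M_f)=1$, so Theorem~\ref{thm:homology}(b) yields $\pi_1(W_0)=1$, and part~(a) gives that $W_0$ has the integral homology of a point. Since $\dim W_0=2n\geq 6$, Proposition~A of~\cite{miln65} (exactly as in Section~\ref{subsection:sphere}) concludes $W_0\cong D^{2n}$, completing~(i).

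I expect the main obstacle to be the straightening step in the second paragraph: after applying a local biholomorphism, the round ball becomes some other strictly pseudoconvex domain, and one must verify that its contact boundary is equivalent to the standard~$(S^{2n-1},\xist)$. An alternative route, avoiding this direct comparison, is to argue that $\mu=0$ forces any Weinstein handle decomposition of $W_f$ to contain no critical (index-$n$) handles and then to invoke Cieliebak's splitting theorem to identify $W_f$ with the standard Stein disc inductively on the dimension.
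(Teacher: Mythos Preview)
Your argument is correct and follows the same overall strategy as the paper: apply Theorem~\ref{thm:homology} to the Milnor filling $W_f$, use Milnor's bouquet description to conclude $\mu=0$, and then handle $W_0$ via Section~\ref{subsection:sphere}. The only substantive difference is in the proof of~(ii). The paper avoids the straightening you flag as problematic: rather than applying a local biholomorphism (which distorts the ball) and then interpolating pseudoconvex domains, it keeps the original coordinates and interpolates the \emph{defining function} linearly. After normalising so that $\partial_{z_0}f(0)=1$, the family $f_t=(1-t)f+t\,z_0$ has no critical points in $D^{2n+2}_\varepsilon$ for $\varepsilon$ small, so $\{f_t=\delta\}\cap D^{2n+2}_\varepsilon$ gives a Stein deformation from $W_f$ to a flat hyperplane slice of the round ball. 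This sidesteps exactly the difficulty you anticipated, and (iii) then drops out immediately. Your approach can be made rigorous (linearly interpolate the plurisubharmonic functions $|\Phi^{-1}(z)|^2$ and $|z|^2$ on the hyperplane), but the bookkeeping is heavier.

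One caution: your proposed alternative route---arguing that $\mu=0$ forces \emph{any} Weinstein decomposition of $W_f$ to be subcritical---does not work as stated. The paper's own remark after Theorem~\ref{thm:homology} recalls the Seidel--Smith and McLean examples of Stein structures on $\R^{2n}$ that are genuinely critical despite the underlying manifold being contractible. So subcritical homotopical dimension does not imply a subcritical Stein decomposition, and you cannot bypass the deformation argument this way.
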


\begin{proof}
The map
\[ f/|f|\co S^{2n+1}_{\varepsilon}\setminus M_f\longrightarrow S^1\]
is a locally trivial fibration, the closure of whose fibre
(the so-called \emph{Milnor fibre}) is diffeomorphic to~$W_f$, see
\cite[Theorem~5.11]{miln68}. Then, by \cite[Theorem~6.6]{miln68},
which applies for $n\geq 3$, and \cite[Theorem~7.2]{miln68},
the Milnor filling $W_f$
is diffeomorphic to a $2n$-dimensional handlebody
obtained by attaching $\mu$ handles of index $n$ to $D^{2n}$;
in particular, it is homotopy equivalent to a bouquet of $\mu$
spheres of dimension~$n$.

Now, by Theorem~\ref{thm:homology}~(a), the assumption on the existence
of a subcritical Stein filling $(W_0,\omega_0)$ implies that
$H_k(W_f)=0$ for $k\geq n$, which forces $\mu=0$. Thus,
$W_f$ is diffeomorphic to~$D^{2n}$.

In particular, $M_f$ is diffeomorphic to $S^{2n-1}$. For the argument that
follows, it would be enough to know that $M_f$ is simply connected,
which holds by \cite[Theorem~5.2]{miln68} and our assumption $n\geq 3$.
Theorem~\ref{thm:homology}~(b), applied to~$W_0$, tells us that
$W_0$ is likewise simply connected. Part (a) of the theorem,
applied to both $W_f$ and $W_0$, tells us that $W_0$ is a homology ball.
Then, as in Section~\ref{subsection:sphere}, we conclude that
$W_0$ is also diffeomorphic to~$D^{2n}$. This proves~(i).

In order to prove (ii), we observe that, because of $\mu=0$,
the origin is a non-singular point of~$f$.
By relabelling the coordinates and multiplying $f$ by a suitable complex
constant, we may assume that $\partial_{z_0}f(0)=1$. Then, for $\varepsilon>0$
sufficiently small, the linear interpolation between $f$ and the
function $\bfz\mapsto z_0$ does not develop any singularity
in $D^{2n}_{\varepsilon}$. This interpolation provides the
Stein deformation of $(W_f,\omega_f)$ to the standard Stein structure
on~$D^{2n}$.

Statement (iii) is an immediate consequence of~(ii).
\end{proof}

For applications of Theorem~\ref{thm:OV} to Milnor fillable contact
manifolds see~\cite[Section~6]{oavi12}.
\subsection{Distinguishing contact structures}
The homological information in Theorem~\ref{thm:homology}
gives a simple criterion to distinguish contact structures $\xi,\xi'$
on a given manifold~$M$. Suppose $(M,\xi)$ is subcritically Stein fillable
with a Stein manifold of homotopical dimension~$\ell_0$, or Liouville
fillable as in Theorem~\ref{thm:homology-Liouville} (including the
homotopical assumption there), and $(M,\xi')$
has a symplectically aspherical filling of homotopical dimension
greater than~$\ell_0$, then $\xi$ and $\xi'$ are not diffeomorphic.

We illustrate this with two simple examples. With $\lambda_Q$ we
denote the canonical Liouville $1$-form on the cotangent bundle
of a manifold~$Q$.

\begin{ex}
\label{ex:distinguish}
(1) The unit sphere bundle $S(T^*S^2\oplus\C)$ of the stabilised cotangent
bundle of $S^2$ is diffeomorphic to $S^3\times S^2$, and it
inherits a contact structure $\xi$
from the symplectic structure $\rmd\lambda_{S^2}+\rmd x\wedge\rmd y$
on the unit disc bundle $D(T^*S^2\oplus\C)$. By \cite[Example~6.2.8]{geig08},
we can think of $(S^3\times S^2,\xi)$ and its filling
as the result of attaching a symplectic $2$-handle to the standard
symplectic $6$-ball along a standard isotropic $S^1\subset (S^5,\xist)$.

On the other hand, the standard contact structure $\xi'$ on
$ST^*S^3\cong S^3\times S^2$ with symplectically aspherical filling
$(S^3\times D^3,\rmd\lambda_{S^3})$ is the result of attaching
a symplectic $3$-handle along a Legendrian $S^2\subset (S^5,\xist)$.

Both contact structures have vanishing first Chern class, so their
underlying almost contact structures are
homotopic~\cite[Proposition~8.1.1]{geig08},
but by Theorem~\ref{thm:homology} the contact structures are not diffeomorphic.
See also \cite[Example~1.9]{gnw16} and, for the handle
descriptions, the discussion in \cite[p.~1196]{geig97}.

(2) Likewise, the contact structures on $S^7\times S^6$ coming
from the description as $S(T^*S^6\oplus\C)$ and $ST^*S^7$,
respectively, are not diffeomorphic. This also follows from
\cite[Corollary~1.18]{cfo10}, whose proof employs
Rabinowitz Floer homology.
\end{ex}

\begin{rem}
Given a (stabilised) cotangent bundle,
there is a Stein structure on the unit disc bundle that provides
a Stein filling of the standard contact structure on the
unit sphere bundle. This follows from the explicit description
of a Weinstein structure in~\cite[Example 11.12~(b)]{ciel12}
and the Stein existence theorem~\cite[Theorem~13.5]{ciel12}.
\end{rem}
\subsection{Fillings of unit cotangent bundles}
The examples above can also be regarded as an instance of the
following result.

\begin{prop}
The unit cotangent bundle $(ST^*Q,\ker\lambda_Q)$ of a closed
manifold $Q$ does not admit a subcritical Stein filling.
\end{prop}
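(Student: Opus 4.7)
The plan is to argue by contradiction using Theorem~\ref{thm:homology}(a). The key observation is that the disc cotangent bundle $(DT^*Q,\rmd\lambda_Q)$ is an exact symplectic, hence symplectically aspherical, filling of $(ST^*Q,\ker\lambda_Q)$; it is this canonical filling against which the hypothetical subcritical Stein filling will be tested. We tacitly assume $n:=\dim Q\geq 2$, so that $\dim ST^*Q=2n-1\geq 3$ and Theorem~\ref{thm:homology} applies.

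Suppose, for a contradiction, that $(ST^*Q,\ker\lambda_Q)$ admits a subcritical Stein filling $(W_0,\omega_0)$ of homotopical dimension $\ell_0\leq n-1$. Applying Theorem~\ref{thm:homology}(a) with $W=DT^*Q$ then forces $H_k(DT^*Q)=0$ for every $k\geq\ell_0+1$, and in particular for $k=n$. Since $DT^*Q$ deformation retracts onto the zero section, $H_n(DT^*Q)\cong H_n(Q)$. If $Q$ is orientable, then $H_n(Q)\cong\Z$ and we have an immediate contradiction.

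If $Q$ is non-orientable, the integral top homology of $Q$ vanishes, and we instead invoke the field-coefficient refinement given in the Remark following Lemma~\ref{lem:relative}: the diagram chase of Section~\ref{subsection:proof-homology-a} in fact yields $H_k(W;\F)=0$ for every field $\F$ and every $k\geq\ell_0+1$. Taking $\F=\Z/2$ and using the mod-$2$ fundamental class $H_n(Q;\Z/2)\cong\Z/2\neq 0$ of the closed connected manifold $Q$ again produces a contradiction in degree $k=n$. The argument is essentially an immediate corollary of Theorem~\ref{thm:homology}, with no real obstacle beyond identifying $DT^*Q$ as a symplectically aspherical filling which has the homotopy type of $Q$ and hence non-vanishing homology in degree~$n$.
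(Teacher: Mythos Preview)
Your argument is correct and follows the same route as the paper: apply Theorem~\ref{thm:homology}(a) to the canonical filling $(DT^*Q,\rmd\lambda_Q)$ and derive a contradiction from the non-vanishing of its homology in degree $n=\dim Q$. The paper condenses this to a single sentence, simply asserting that $DT^*Q$ ``has non-trivial homology in the critical dimension''; your treatment is in fact more careful, since you explicitly address the non-orientable case (where $H_n(Q;\Z)=0$) by invoking the field-coefficient version from the Remark after Lemma~\ref{lem:relative}.
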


\begin{proof}
If it did, this would produce a contradiction to Theorem~\ref{thm:homology},
since the symplectically aspherical filling $(DT^*Q,\rmd\lambda_Q)$
has non-trivial homology in the critical dimension.
\end{proof}

\begin{rem}
For related results see \cite[Corollary~1.18]{cfo10} and
\cite[Corollary~2.2]{alfr12}.
\end{rem}
\section{Dehn--Seidel twists}
\label{section:DS}
Let $L\cong S^{n-1}$ be a Lagrangian sphere in a symplectic manifold
$(V,\omega)$ of real dimension~$2n-2$. By the Weinstein neighbourhood
theorem, cf.~\cite[Theorem~3.33]{mcsa98},
there is a neighbourhood of $L$ symplectomorphic to
a neighbourhood of the zero section in
the cotangent bundle $T^*S^{n-1}$ with its canonical symplectic structure
$\rmd\lambda_{S^{n-1}}$. The inclusion $S^{n-1}\subset\R^n$
gives us a global coordinate description
of the Liouville $1$-form $\lambda_{S^{n-1}}$.
In terms of Cartesian coordinates $(\bfq ,\bfp) \in \R^n\times\R^n$,
the cotangent bundle $T^*S^{n-1}\subset\R^{2n}$ is described by the
equations
\[ \bfq\cdot\bfq=1 \text{ and } \bfq\cdot\bfp=0;\]
then $\lambda_{S^{n-1}}=\bfp\,\rmd\bfq$.

Define a map
\[ \tau\co (T^*S^{n-1},\rmd\lambda_{S^{n-1}})\longrightarrow
(T^*S^{n-1},\rmd\lambda_{S^{n-1}}) \]
as follows. Consider the normalised geodesic flow $\sigma_t$ on
$T^*S^{n-1}\setminus S^{n-1}$ given by
\[ \sigma_t(\bfq,\bfp)=
\left(
\begin{array}{cc}
\cos t        & |\bfp|^{-1} \sin t \\
-|\bfp|\sin t & \cos t
\end{array}
\right)
\binom{\bfq}{\bfp}.\]
Then set
\[ \tau(\bfq,\bfp)=\sigma_{g(|\bfp|)}(\bfq,\bfp),\]
where $r\mapsto g(r)$ is a smooth function that interpolates monotonically
between $\pi$ near $r=0$ and $0$ for large~$r$. For $\bfp=0$ this is read
as $\tau(\bfq,0)=(-\bfq ,0)$.
Then $\tau$ is a
symplectomorphism of $(T^*S^{n-1},\rmd\lambda_{S^{n-1}})$,
equal to the identity for $|\bfp|$ large.
Thus, $\tau$ may be regarded as a symplectomorphism of~$(V,\omega)$,
and it is then called a right-handed \emph{Dehn twist} along $L\subset V$;
for $n=2$ this coincides with the classical notion of a Dehn twist.

These generalised Dehn twists have been introduced
and studied extensively
by Seidel, see \cite[Section~6]{seid99} and~\cite{seid08},
and they are nowadays often referred to as Dehn--Seidel twists.

\begin{rem}
For $n-1$ odd, the model Dehn twist $\tau$ on $T^*S^{n-1}$ is
of infinite order in $\pi_0(\Diff^c(T^*S^{n-1}))$, where $\Diff^c$
denotes the group of compactly supported diffeomorphisms.
For $n-1=2$ or $6$, the order of $\tau$ is two; for other even $n-1$
it is four or eight, see~\cite[p.~3311]{seid14} for a discussion and
references.

One example where $\tau^2$ is even
symplectically trivial (i.e.\ isotopic to the
identity via compactly supported symplectomorphisms)
is the Dehn twist along the anti-diagonal in
$S^2\times S^2$ with the monotone product symplectic structure
(i.e.\ of equal area on the two factors).
Seidel proved that this example is atypical.
For instance, it is shown in \cite{seid08}, based on the
work of Gromov~\cite{grom85}, that the group
of compactly supported symplectomorphisms of $(T^*S^2,\rmd\lambda_{S^2})$
is homotopy equivalent to~$\Z$, generated by $\tau$. Other results of
\cite{seid08} concern the symplectic non-triviality of
$\tau^2$ in dimension four; in~\cite[Section~5]{seid14} it is
shown that the Dehn twist in the cotangent
bundle of any higher-dimensional sphere is of infinite
order symplectically. Seidel's
arguments involve subtle methods from Floer homology.
\end{rem}

We now want to prove Theorem~\ref{thm:DS} from the introduction,
which establishes the symplectic non-triviality of any non-empty
composition of right-handed Dehn twists for a broad class of
symplectic manifolds,
including, in particular, the cotangent bundle
$(T^*S^{n-1},\rmd\lambda_{S^{n-1}})$
\emph{per se}. Thus, let $(V,\rmd\lambda)$ be a Liouville manifold
of dimension at least four.
Write $\Symp^c(V)$ for the group of compactly supported
symplectomorphisms $\phi$ of $(V,\rmd\lambda)$.

\begin{rem}
We want to detect the symplectic non-triviality of compositions of
right-handed Dehn twists in $\Symp^c(V)$. To this end, we shall
be using an argument by contradiction, starting from the assumption
that there is a symplectic isotopy from a given
symplectomorphism of that type to the identity. This allows us,
by Remark~\ref{rem:Liouville}, to assume without loss of contradiction
that all relevant maps and isotopies are supported in the
interior of a Liouville domain $V_0$ whose symplectic completion
is~$V$.
\end{rem}

If $\phi\in\Symp^c(V)$ is \emph{exact}, i.e.\ $\phi^*\lambda-\lambda=\rmd h$
for some smooth function $h\co V\rightarrow\R$
with compact support in~$\Int(V)$, there is a canonical
construction due to Giroux, see~\cite[Theorem~7.3.3]{geig08}
of a contact structure on the open book with page $\Int(V_0)$
and monodromy~$\phi$. We denote this contact manifold by $\Open(V_0,\phi)$.
We want to show that if $\phi$ lies in the identity component
$\Symp^c_0(V)$ of $\Symp^c(V)$, i.e.\ if $\phi$ is isotopic
to the identity via compactly supported but not, \emph{a priori},
exact symplectomorphisms, then the resulting contact manifold
does not depend, up to contactomorphism, on the specific
choice of such~$\phi$.

\begin{prop}
\label{prop:open}
If $\phi\in\Symp^c_0(V)$ is exact, then $\Open(V_0,\phi)$ is
contactomorphic to $\Open (V_0,\mathrm{id})$.
\end{prop}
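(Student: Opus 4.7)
The plan is to connect $\phi$ to the identity through a smooth path of \emph{exact} compactly supported symplectomorphisms, apply the Giroux construction to each member of this path, and invoke Gray stability. By Remark~\ref{rem:Liouville} I may assume $V$ is of finite type. Since $\phi\in\Symp^c_0(V)$, I start by choosing any smooth isotopy $(\phi_t)_{t\in[0,1]}$ in $\Symp^c(V)$ with $\phi_0=\mathrm{id}$, $\phi_1=\phi$, supported in a fixed compact subset of $\Int(V_0)$, generated by a time-dependent symplectic vector field~$X_t$.

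The first step is to check that the flux of $(\phi_t)$ vanishes. Cartan's formula gives
\[
\frac{\rmd}{\rmd t}\bigl(\phi_t^*\lambda-\lambda\bigr)=\phi_t^*(\iota_{X_t}\omega)+\rmd\bigl(\phi_t^*(\lambda(X_t))\bigr),
\]
and since a compactly supported symplectic isotopy acts trivially on compactly supported cohomology, integration yields
\[
[\phi^*\lambda-\lambda]_c=\int_0^1[\iota_{X_t}\omega]_c\,\rmd t \in H^1_c(V;\R),
\]
which vanishes by the assumed exactness of $\phi$. Banyaga's theorem in its compactly supported form then allows me to replace $(\phi_t)$, keeping the endpoints fixed, by a Hamiltonian isotopy $(\psi_t)$ generated by a smoothly varying compactly supported Hamiltonian~$H_t$. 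A straightforward Cartan-formula computation shows $\psi_t^*\lambda-\lambda=\rmd h_t$ with $h_t$ compactly supported and smooth in~$t$, so every member of the new path is exact.

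Finally, the Giroux construction \cite[Theorem~7.3.3]{geig08} applied to each $\psi_t$ produces a smoothly varying family of contact forms on a fixed underlying smooth manifold (the smooth type of the open book depends only on $V_0$ and on the smooth isotopy class of the monodromy). Gray stability then supplies an ambient isotopy identifying the contact structure at $t=1$ with that at $t=0$, giving the claimed contactomorphism $\Open(V_0,\phi)\cong\Open(V_0,\mathrm{id})$. The main obstacle is the flux-theoretic upgrade: ensuring the arbitrary symplectic isotopy provided by $\phi\in\Symp^c_0(V)$ can be replaced, without changing its endpoints, by one through exact symplectomorphisms; the remaining displayed computation and the Gray-stability argument are routine.
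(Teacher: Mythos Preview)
Your proof is correct and follows essentially the same approach as the paper. Both arguments reduce to showing that an exact $\phi\in\Symp^c_0(V)$ is in fact Hamiltonian, then use the resulting path of exact symplectomorphisms together with the explicit Giroux construction and Gray stability; the paper cites the flux exact sequence $0\to\Ham^c(V)\to\Symp^c_0(V)\to H^1_c(V;\R)\to 0$ from McDuff--Salamon directly, whereas you recover the same conclusion by computing the flux of the given isotopy and invoking Banyaga's theorem, which is the content of that exact sequence.
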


\begin{proof}
As shown in \cite[Chapter~10]{mcsa98}, there is an exact
sequence
\[ \begin{array}{ccccccccc}
0 & \longrightarrow & \Ham^c(V) & \longrightarrow & \Symp^c_0(V) &
\longrightarrow & H^1_c(V;\R) & \longrightarrow & 0,\\
  &                 &           &                 & \phi         &
\longmapsto     & [\phi^*\lambda-\lambda] & & 
\end{array}\]
where $\Ham^c(V)$ denotes the group of compactly
supported Hamiltonian diffeomorphisms, that is, symplectomorphisms
that are the time-$1$ map of a Hamiltonian isotopy with support in a
compact subset (depending on the isotopy), and
$H^1_c(V;\R)$ denotes the compactly supported de Rham cohomology
of~$V$.

Thus, our assumptions imply that $\phi$ is actually Hamiltonian.
Let $(H_t)_{t\in[0,1]}$ be the time-dependent Hamiltonian function
generating the Hamiltonian isotopy $(\phi_t)_{t\in[0,1]}$
with $\phi_0=\mathrm{id}$ and $\phi_1=\phi$. Write $X_t$
for the corresponding time-dependent Hamiltonian vector field. Then
\begin{equation}
\label{eqn:exact}
\phi_t^*\lambda-\lambda=\rmd h_t,
\end{equation}
where
\[ h_t:=\int_0^t\phi_s^*(\lambda(X_s)-H_s)\,\rmd s,\]
see~\cite[Proposition~9.19]{mcsa98}.

With (\ref{eqn:exact}) one deduces $\Open(V,\mathrm{id})\cong
\Open(V,\phi)$ from the explicit construction of the contact open book
in \cite[Theorem~7.3.3]{geig08}, using
Gray stability.
\end{proof}

The following lemma will allow us to apply this observation to
compositions of Dehn twists. Here the assumption
$\dim V\geq 4$ is used.

\begin{lem}
\label{lem:Dehn-exact}
Any composition of Dehn twists on $(V,\rmd\lambda)$ is an exact
symplectomorphism.
\end{lem}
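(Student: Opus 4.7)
The plan is to reduce the claim to a single Dehn twist and then to verify exactness via a cohomological argument on a Weinstein neighbourhood of the Lagrangian sphere.

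First, I would observe that exact compactly supported symplectomorphisms form a subgroup of $\Symp^c(V)$: if $\phi_j^*\lambda-\lambda=\rmd h_j$ with $h_j$ compactly supported, for $j=1,2$, then
\[ (\phi_1\circ\phi_2)^*\lambda-\lambda
   = \phi_2^*(\lambda+\rmd h_1)-\lambda
   = \rmd\bigl(h_2+h_1\circ\phi_2\bigr), \]
and the primitive $h_2+h_1\circ\phi_2$ has compact support because $\phi_2$ does. It therefore suffices to prove exactness of a single Dehn twist $\tau$ along a Lagrangian sphere $L\cong S^{n-1}\subset V$.

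For the single twist, I would fix a Weinstein embedding $\iota\co U_0\hookrightarrow V$ from an open disc subbundle $U_0\subset T^*S^{n-1}$ onto an open neighbourhood $U:=\iota(U_0)$ of~$L$. After scaling the interpolating function $g$ if necessary, the support of $\tau$ becomes a compact subset of $U$, and therefore $\tau^*\lambda-\lambda$ is a closed $1$-form on $V$ with compact support in~$U$. Its class in $H^1_c(V;\R)$ thus comes, by extension by zero, from a class in $H^1_c(U;\R)$.

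The key step will be to verify $H^1_c(U;\R)=0$. Since $U$ is an oriented $(2n-2)$-manifold without boundary, homotopy equivalent to $S^{n-1}$, Poincar\'e duality gives
\[ H^1_c(U;\R)\cong H_{2n-3}(U;\R)\cong H_{2n-3}(S^{n-1};\R). \]
The hypothesis $\dim V\geq 4$ forces $n\geq 3$, so $2n-3>n-1$ and $H_{2n-3}(S^{n-1};\R)=0$. Hence $\tau^*\lambda-\lambda=\rmd h$ for some compactly supported function~$h$.

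The main subtlety is the role of the dimension hypothesis: for $n=2$ one has $H_{2n-3}(S^{n-1})=H_1(S^1)\neq 0$, and indeed classical Dehn twists on an annulus need not be exact symplectomorphisms. So the restriction $\dim V\geq 4$ is used precisely to kill the Poincar\'e-dual homology of the Lagrangian sphere.
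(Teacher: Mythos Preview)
Your proof is correct and follows essentially the same route as the paper: reduce to a single Dehn twist via the subgroup property of exact symplectomorphisms, then kill the class $[\tau^*\lambda-\lambda]\in H^1_c(U;\R)$ by Poincar\'e duality on a Weinstein neighbourhood $U\simeq S^{n-1}$, using $\dim V\geq 4$ to ensure $2n-3>n-1$. The paper additionally cites an explicit primitive for the model twist on $T^*S^{n-1}$ from the literature before observing (as you do directly) that this is unnecessary once one has the cohomological vanishing.
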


\begin{proof}
For the model Dehn twist $\tau$ on $(T^*S^{n-1},\rmd\lambda_{S^{n-1}})$,
an explicit function $h$ with
$\tau^*\lambda_{S^{n-1}}-\lambda_{S^{n-1}}=\rmd h$
is described in~\cite{koni05}. The choice of primitive $\lambda_{S^{n-1}}$
for the symplectic form, however, is irrelevant for the exactness
of~$\tau$, as follows from a simple homological consideration.

If $(U,\rmd\mu)$ is any open $(2n-2)$-dimensional symplectic manifold
(without boundary) with
$H^{2n-3}(U;\R)=0$, such as a tubular neighbourhood of a Lagrangian
sphere (for $n\geq 3$), then $H^1_c(U;\R)=0$ by Poincar\'e duality for
compactly supported cohomology~\cite[p.~44]{botu82}. Then,
any compactly supported symplectomorphism of $(U,\rmd\mu)$
is exact, regardless of the choice of primitive $\mu$ for the
symplectic form. Hence, this remains true if $(U,\rmd\mu)$ admits
a (not necessarily exact!) symplectic embedding into a larger symplectic
manifold $(V,\rmd\lambda)$,
and the symplectomorphism is regarded as an automorphism of~$V$.
In particular, Dehn twists on exact symplectic manifolds
of dimension $2n-2\geq 4$ are always exact.

A straightforward calculation shows that the composition of
exact symplectomorphisms is likewise exact.
\end{proof}

\begin{proof}[Proof of Theorem~\ref{thm:DS}]
Arguing by contradiction, we assume that $\phi$ is a non-trivial composition
of right-handed Dehn twists on the Liouville manifold $(V,\rmd\lambda)$
which is symplectically isotopic to the identity, i.e.\
contained in $\Symp^c_0(V)$. 
By Lemma~\ref{lem:Dehn-exact} and Proposition~\ref{prop:open},
the contact open book $\Open(V_0,\phi^N)$ is contactomorphic
to $\Open(V_0,\mathrm{id})$ for any natural number~$N\in\N$.

The contact open book $(M,\xi):=\Open(V_0,\mathrm{id})$ is symplectically
filled by $V_0\times D^2$ (after rounding corners inside $V\times\C$).
This places us in the situation of Theorem~\ref{thm:homology-Liouville}.

Now we use the specific nature of $\phi$ as a composition of right-handed
Dehn twists. By \cite[Lemma~4.2]{koer10}, a Lagrangian sphere $L$ in the
page of a contact open book may be assumed to be Legendrian
in the open book. Then, by \cite[Theorem~4.4]{koer10},
composing the monodromy of the given open book with the right-handed
Dehn twist along $L$ is equivalent to performing a Weinstein
surgery on the open book along~$L$; see~\cite{geig12} for a simpler
description in the $3$-dimensional situation.

This gives us a Liouville (and hence symplectically aspherical)
filling $(W_N,\omega_N)$ of $\Open(V_0,\phi^N)\cong (M,\xi)$ for any
$N\in\N$. By Theorem~\ref{thm:homology-Liouville}, we have,
in particular, a homology epimorphism $H_n(V_0)\rightarrow H_n(W_N)$.

On the other hand, $W_N$ is obtained from $V_0\times D^2$
by attaching $kN$ handles of index~$n$, where $k$ is the
number of Dehn twists in the composition~$\phi$.
Think of $W_N$ as decomposed into $V_0\times D^2$ and the handles,
with intersection given by the disjoint neighbourhoods,
each diffeomorphic to $S^{n-1}\times D^n$, of $kN$
attaching spheres in $\partial(V_0\times D^2)$. The relevant part
of the Mayer--Vietoris sequence,
\[ H_n(W_N)\longrightarrow H_{n-1}(\sqcup_{kN}S^{n-1})\longrightarrow
H_{n-1}(V_0),\]
gives us the estimate
\[ b_n(W_N)+b_{n-1}(V_0)\geq kN\]
on Betti numbers.
Together with the epimorphism $H_n(V_0)\rightarrow H_n(W_N)$
we have
\[ b_n(V_0)+b_{n-1}(V_0)\geq kN\]
for all $N\in\N$, which is a contradiction.
\end{proof}

\begin{rem}
With the notation from this section, Theorem~\ref{thm:DS}
can be rephrased as saying that for any non-trivial
composition $\phi$ of right-handed Dehn twists on a Liouville
manifold $(V,\rmd\lambda)$,
the class of $\phi$ is of infinite order in $\pi_0(\Symp^c(V))$.
\end{rem}

\begin{rem}
\label{rem:DS}
For the iteration of a single Dehn twist on a Liouville domain,
Theorem~\ref{thm:DS}
has been proved by Uljarevi\'c \cite[Corollary~5.6.3]{ulja16},
using Floer-theoretic methods. The analogue of the
theorem for iterations of a single fibred or fractional Dehn twist,
under various technical assumptions, 
are proved in \cite[Theorem~B]{cdvk14}, \cite[Corollary~1.2]{cdvk16}
and \cite[Corollary~1.4]{ulja14}. The papers \cite{cdvk14,cdvk16}
also build on the idea to use the iterated (fractional or fibred) Dehn twist
as the monodromy of an open book. The infinite order of
the Dehn twist is then established by considering the mean
Euler characteristic in symplectic homology, or by studying filling
obstructions.
\end{rem}
\section{Proof of Theorem~\ref{thm:filling}}
\label{section:filling}
Let $(M,\xi)$ be a simply connected contact manifold of dimension
at least five,
with a subcritical Stein filling $(W_0,\omega_0)$, and
let $(W,\omega)$ be any other symplectically aspherical
filling of $(M,\xi)$. Our aim is to show that $W$ must be
diffeomorphic to~$W_0$.

As in the proof of Theorem~\ref{thm:homology}, we may think of $W$
as a level set of a plurisubharmonic potential on a split
Stein manifold $V\times\C$, and of $W_0$ as the
corresponding sublevel set.

\begin{figure}[h]
\labellist
\small\hair 2pt
\pinlabel $V$ [b] at 565 217
\pinlabel $\C$ [l] at 288 421
\pinlabel $W$ at 175 197
\pinlabel $W_0$ at 175 324
\pinlabel $W_1$ at 95 83
\pinlabel $M_0$ [l] at 432 340
\pinlabel $M_1$ [l] at 541 340
\pinlabel $V_1$ [b] at 514 396
\pinlabel $V_0$ [t] at 388 377
\endlabellist
\centering
\includegraphics[scale=0.55]{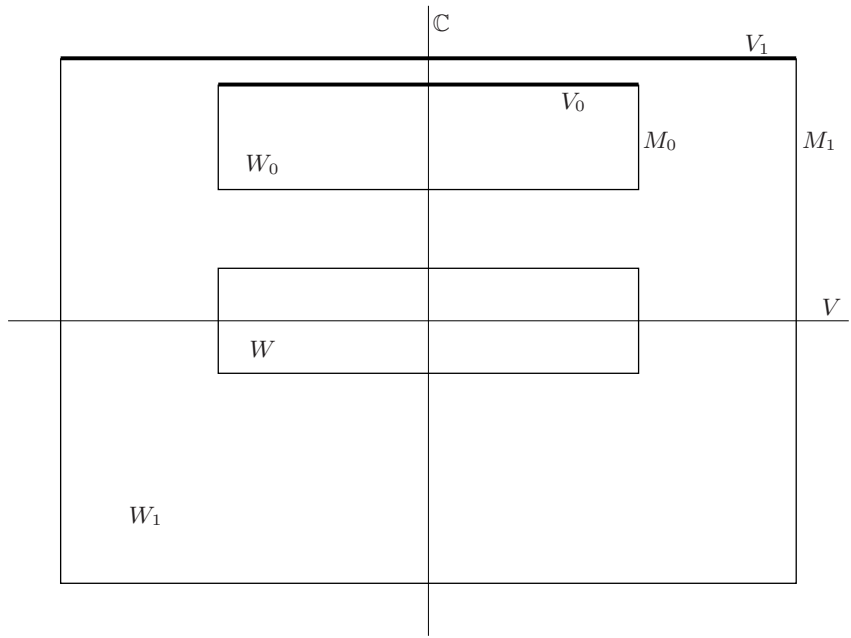}
  \caption{The cobordism $X=W_1\setminus\Int(W_0)$.}
  \label{figure:sq-cobordism}
\end{figure}

Consider the schematic picture shown in Figure~\ref{figure:sq-cobordism}.
From now on, the argument is essentially topological. This allows us
to think of $W_0$ as $V_0\times D^2$, where $V_0$ is a Stein
domain with symplectic completion~$V$ (in the sense of
Remark~\ref{rem:Liouville}).

We build a manifold $W_1$ as follows. Add a (sufficiently large)
collar neighbourhood
to $W_0$, i.e.\ pass to a higher sublevel set of the plurisubharmonic
potential, with boundary~$M_1\cong M$.
There is a topological
copy of $W_0$ inside this neighbourhood (and it is this which
is shown in Figure~\ref{figure:sq-cobordism}), disjoint from the
original one,
simply given by translation in the $\C$-direction. Cut out
the original copy of $W_0$ and replace it by $W$; this can be
done symplectically. The resulting symplectic manifold
$W_1$ with boundary $M_1\cong M$ is simply $W$ with a collar added,
and Theorem~\ref{thm:homology} applies to it.

Inside the boundary $M_0\cong M$ of $W_0$, there is a copy
of $V_0$, which we can think of as $V_0\times\{1\}\subset V_0\times D^2=W_0$.
There is a corresponding copy $V_1$ of $V_0$ inside $M_1$.
Apart from the inclusion $W_0\rightarrow W_1$, there
is a second embedding $W_0\rightarrow W_1$ that maps $V_0$
diffeomorphically to~$V_1$, obtained by extending an isotopy that
moves $V_0$ to~$V_1$.

Now set $X=W_1\setminus\Int(W_0)$, which is a cobordism
between $M_0$ and~$M_1$. This gives us the following diagram of maps.
We shall refer to it in the sequel as the
`cobordism diagram'.

\begin{diagram}
V_0                 &                       &     & \rTo^{\cong}          &                      &                       & V_1\\
                    & \rdTo^{\simeq}        &     &                       &                      & \ldTo^{\mathrm{(ii)}} &    \\
\dTo^{\mathrm{(i)}} &                       & W_0 & \rTo^{\mathrm{(iii)}} & W_1                  &                       & \dTo_{\mathrm{(i')}} \\
                    & \ruTo^{\mathrm{g.p.}} &     &                       & \uTo^{\mathrm{g.p.}} & \luTo^{\mathrm{(o)}}  & \\
M_0                 &            & \rTo^{\mathrm{(iv)}}    &                       & X                    & \lTo^{\mathrm{(ii)}}  & M_1    
\end{diagram}

All maps in this diagram are inclusions, except for $V_0\rightarrow V_1$,
which is the diffeomorphism just mentioned, and $W_0\rightarrow W_1$,
which indicates both the inclusion and the alternative embedding just
described. With this understood, the cobordism diagram is homotopy
commutative.

By Theorem~\ref{thm:homology}, the inclusion $M_1\rightarrow W_1$,
which is essentially the inclusion $M\rightarrow W$, is always
$\pi_1$-surjective. Under the assumption that $M$ is simply
connected, it is of course also $\pi_1$-injective. Since, later on,
we shall be considering more general situations, we formulate the
next result in terms of this assumption.

\begin{lem}
\label{lem:filling-pi}
If the inclusion $M\rightarrow W$ is $\pi_1$-injective, then
the inclusion maps $M_0,M_1\rightarrow X$ are $\pi_1$-isomorphic.
\end{lem}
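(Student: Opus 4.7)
The plan is to apply Seifert--van Kampen to the decomposition $W_1 = W_0 \cup_{M_0} X$ and to exploit the product structure $W_0 \cong V_0 \times D^2$ (available from the Cieliebak splitting used in Section~\ref{subsection:completion}) so as to collapse the amalgamated product, forcing $\pi_1(X) \to \pi_1(W_1)$ to be an isomorphism. The two boundary isomorphisms will then follow formally.

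The crucial input, stronger than Theorem~\ref{thm:homology}(b) alone, is that the inclusion $M_0 \hookrightarrow W_0$ is a $\pi_1$-\emph{isomorphism}, not merely a $\pi_1$-surjection. Writing $W_0 \cong V_0 \times D^2$ topologically, we decompose the boundary as $M_0 = (V_0 \times S^1) \cup_{\partial V_0 \times S^1} (\partial V_0 \times D^2)$; a direct van Kampen calculation then identifies the $S^1$-generator with a nullhomotopic loop inside the $\partial V_0 \times D^2$ cap, yielding $\pi_1(M_0) \cong \pi_1(V_0) \cong \pi_1(W_0)$, with all maps induced by inclusion.

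Applying Seifert--van Kampen to $W_1 = W_0 \cup_{M_0} X$ gives $\pi_1(W_1) = \pi_1(W_0) *_{\pi_1(M_0)} \pi_1(X)$; since the map $\pi_1(M_0) \to \pi_1(W_0)$ is now an iso, the amalgamated product collapses to $\pi_1(X)$, and hence $\pi_1(X) \to \pi_1(W_1)$ is an isomorphism. Under the $\pi_1$-injectivity hypothesis combined with Theorem~\ref{thm:homology}(b), $\pi_1(M_1) \to \pi_1(W_1)$ is also iso; factoring through $X$ yields $\pi_1(M_1) \to \pi_1(X)$ iso. For the other boundary, the alternative embedding in the cobordism diagram exhibits $V_0 \hookrightarrow W_0 \hookrightarrow W_1$ as homotopic to $V_0 \xrightarrow{\cong} V_1 \hookrightarrow M_1 \hookrightarrow W_1$. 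Since $M_1$ is the boundary of a larger sublevel set with the same product structure, an analogous van Kampen computation gives $\pi_1(V_1) \to \pi_1(M_1)$ iso, so every arrow in the alternative route is an iso, forcing $\pi_1(W_0) \to \pi_1(W_1)$ iso. Composing with the iso $\pi_1(M_0) \to \pi_1(W_0)$ and inverting the iso $\pi_1(X) \to \pi_1(W_1)$, we obtain $\pi_1(M_0) \to \pi_1(X)$ iso.

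The main subtlety lies in the key ingredient above: strengthening Theorem~\ref{thm:homology}(b) from $\pi_1$-surjectivity to $\pi_1$-bijectivity for the boundary inclusion into the subcritical Stein filling. This is precisely where the product structure $W_0 = V_0 \times D^2$ is essential; it drives the collapse of the amalgamated product, and with it the entire argument. Without subcriticality the critical handles of a general Stein filling would obstruct this computation, and the pushout would genuinely contribute extra relations to $\pi_1(X)$.
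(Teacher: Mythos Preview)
Your proof is correct, but it takes a different route from the paper's. The paper establishes the key $\pi_1$-isomorphisms $M_0\to W_0$ and $X\to W_1$ by a \emph{general position argument}: since $W_0$ retracts onto a skeleton of dimension at most $n-1$, any relative $k$-disc $(D^k,S^{k-1})\to(W_0,M_0)$ with $k\leq 2$ can be made disjoint from the skeleton and then flowed into~$M_0$, giving $\pi_1(W_0,M_0)=\pi_2(W_0,M_0)=0$; the same argument applied to discs in $(W_1,X)$ relative to the skeleton of $W_0\subset W_1$ handles the second isomorphism. You instead use Seifert--van Kampen twice: once on the explicit product boundary $M_0=\partial(V_0\times D^2)$ to get $\pi_1(M_0)\cong\pi_1(V_0)$, and once on the decomposition $W_1=W_0\cup_{M_0}X$ to collapse the amalgamated product. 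Both methods are legitimate and the subsequent diagram chase is essentially the same.

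The trade-off is this: your argument leans on the product structure $W_0\cong V_0\times D^2$ coming from Cieliebak's splitting theorem, which makes the computation concrete but ties you to that specific model. The paper's general position argument uses only the handle indices, and this is what allows it to be recycled later (see Lemma~\ref{lem:rel-pi-H}) for fillings $W$ that merely have a handle decomposition with indices $\leq\ell$, without any product or Stein structure. One small point worth making explicit in your write-up: when you pass from the alternative embedding $W_0\to W_1$ to the inclusion in the final step, you are using that the two are isotopic (hence induce the same map on~$\pi_1$); this is built into the paper's statement that the cobordism diagram is homotopy commutative, but it is the hinge on which your last deduction turns.
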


\begin{proof}
The following steps refer to the maps with the corresponding labels
in the cobordism diagram.

(g.p.) This label stands for `general position'. First consider
the inclusion $M_0\rightarrow W_0$. By assumption, $W_0$ has
a handle decomposition with handles of index at most~$n-1$. This allows us
to define a Morse--Smale function on $W_0$ whose negative gradient
flow contracts $W_0$ onto a subcomplex of dimension at most~$n$,
which we call the skeleton. (Notice that the dimension of the skeleton
may be larger than the homotopical dimension of~$W_0$. We do not care
about homologically inessential handles, as long as they have
subcritical index.) Under the positive gradient flow, the
complement of the skeleton flows into the boundary~$M_0$.

Now consider a relative $k$-disc in $W_0$, that is, a continuous map
$(D^k,S^{k-1})\rightarrow (W_0,M_0)$. By general position,
we can make this disc (rel boundary) disjoint from the skeleton,
provided that $k+n-1<2n$. The gradient flow
of the Morse--Smale function then allows us to push
that disc (rel boundary) into $M_0$.

For $k\in\{1,2\}$,
that inequality is satisfied for all~$n\geq 2$. It follows that
the relative homotopy groups $\pi_1(W_0,M_0)$ and $\pi_2(W_0,M_0)$
are trivial, which implies that the inclusion $M_0\rightarrow W_0$ is
$\pi_1$-isomorphic.

In an analogous fashion, we can deal with the inclusion $X\rightarrow W_1$.
Given a relative $k$-disc $(D^k,S^{k-1})\rightarrow (W_1,X)$,
for $k\in\{1,2\}$ we can again make it disjoint from the skeleton
of $W_0$, and then use the gradient flow to push it into~$X$.

(o) The map $M_1\rightarrow W_1$ is $\pi_1$-isomorphic by
Theorem~\ref{thm:homology} and our assumption.

(i) The inclusion $V_0\rightarrow M_0$ is $\pi_1$-isomorphic, since
both the homotopy equivalence $V_0\rightarrow W_0$ and the
inclusion $M_0\rightarrow W_0$ have this property.

(i') The inclusion $V_1\rightarrow M_1$ is then likewise $\pi_1$-isomorphic.

(ii) With (i') it follows that $V_1\rightarrow W_1$ is
$\pi_1$-isomorphic, and the same is true for $M_1\rightarrow X$
by (g.p.). 

(iii) Interpreting $W_0\rightarrow W_1$ as the `alternative embedding',
we conclude from (ii) that this map is $\pi_1$-isomorphic.

(iv) It now follows that $M_0\rightarrow X$ is $\pi_1$-isomorphic.
\end{proof}

Next we analyse the maps in the cobordism diagram with a view to homology.
The following lemma does not presuppose any information on
the fundamental group of~$M$. The input previously provided
by a general position argument or by the assumption on $\pi_1$-injectivity
now comes directly from Theorem~\ref{thm:homology}.

\begin{lem}
\label{lem:filling-H}
The relative homology groups $H_k(X,M_0)$ and $H_k(X,M_1)$
vanish for all $k\in\N_0$.
\end{lem}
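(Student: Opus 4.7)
The plan is to reduce both vanishing statements to showing that the inclusion $\iota\co W_0\hookrightarrow W_1$ induces an isomorphism on integral homology, and then to use excision together with Poincar\'e--Lefschetz duality.

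For the first vanishing, excision gives $H_k(X,M_0)\cong H_k(W_1,W_0)$, and the long exact sequence of the pair $(W_1,W_0)$ then reduces the claim to showing $\iota_*\co H_*(W_0)\to H_*(W_1)$ is an isomorphism in every degree. To establish this, I would replace $\iota$ by the homotopic alternative embedding $\iota'$ from the cobordism diagram; on homology, $\iota_*=\iota'_*$ factors as
\[
H_k(W_0)\xleftarrow{\;\cong\;} H_k(V_0)\xrightarrow{\;\cong\;} H_k(V_1)\longrightarrow H_k(M_1)\longrightarrow H_k(W_1),
\]
where the first map is induced by the homotopy equivalence $V_0\simeq W_0$, the second by the diffeomorphism $V_0\cong V_1$, and the last two by inclusions. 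By Theorem~\ref{thm:homology}(a) applied to the filling $W_1$, the map $M_1\hookrightarrow W_1$ is an isomorphism on $H_k$ for $k\leq\ell_0$, while both sides vanish for $k>\ell_0$. For the remaining inclusion $V_1\hookrightarrow M_1$, note that the composition $V_0\hookrightarrow M_0\hookrightarrow W_0$ is the inclusion $V_0\times\{1\}\hookrightarrow V_0\times D^2$, hence a homotopy equivalence. Combined with Theorem~\ref{thm:homology}(a) applied to $W_0$, which makes $M_0\hookrightarrow W_0$ an isomorphism on $H_k$ for $k\leq\ell_0$, this forces $V_0\hookrightarrow M_0$ to be a homology isomorphism in degrees $\leq\ell_0$, and by transport of structure the same holds for $V_1\hookrightarrow M_1$. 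For $k>\ell_0$ both $H_k(V_1)$ and $H_k(W_1)$ vanish. Thus $\iota_*$ is an isomorphism in every degree, so $H_*(X,M_0)=0$.

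For the second vanishing, I plan to invoke Poincar\'e--Lefschetz duality on the compact oriented $2n$-dimensional cobordism $X$, namely $H_k(X,M_0)\cong H^{2n-k}(X,M_1)$. The vanishing just proved therefore gives $H^*(X,M_1;\Z)=0$, and the universal coefficient theorem then forces $H_*(X,M_1;\Z)=0$: vanishing of both the Hom and Ext summands of $H^k(X,M_1)$ makes $H_*(X,M_1)$ simultaneously torsion and torsion-free.

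The main obstacle I anticipate is justifying the displayed factorization of $\iota_*$. The delicate point is that $\iota$ and $\iota'$ must induce the same map on $H_*$ (they are isotopic embeddings, hence homotopic as maps), and the identification of $\iota'|_{V_0}$ with $V_0\xrightarrow{\cong}V_1\hookrightarrow M_1\hookrightarrow W_1$ has to be pinned down from the very construction of $\iota'$. Once that is done, every arrow in the chain is an isomorphism for an independent reason (Theorem~\ref{thm:homology}(a), a diffeomorphism, or a homotopy equivalence), and the rest of the argument is formal.
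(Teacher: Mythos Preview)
Your proposal is correct and follows essentially the same route as the paper: reduce $H_*(X,M_0)$ via excision to $H_*(W_1,W_0)$, show the map $W_0\to W_1$ is a homology isomorphism by chasing the cobordism diagram through $V_0\cong V_1\hookrightarrow M_1\hookrightarrow W_1$ and invoking Theorem~\ref{thm:homology}(a) for both fillings, and then obtain $H_*(X,M_1)=0$ from Poincar\'e--Lefschetz duality and the universal coefficient theorem. Your write-up is in fact slightly more explicit than the paper's about why the inclusion and the alternative embedding induce the same map on homology, which is a point the paper leaves to the reader.
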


\begin{proof}
By Theorem~\ref{thm:homology}, the inclusions $M_0\rightarrow W_0$
and $M_1\rightarrow W_1$ induce isomorphisms on $H_k$ for
$k=0,\ldots,\ell_0$.

(i), (i') It follows that the inclusion $V_0\rightarrow M_0$
induces isomorphisms in homology up to degree~$\ell_0$, and
hence so does the inclusion $V_1\rightarrow M_1$.

(ii) The same is then true for the inclusion $V_1\rightarrow W_1$.

(iii) Since the homology groups of $W_0$ and $W_1$
in degree $k>\ell_0$ are trivial by Theorem~\ref{thm:homology}, we conclude
that the inclusion $W_0\rightarrow W_1$ induces an isomorphism
in homology, and hence $H_k(W_1,W_0)=0$ for all~$k$.

(iv) By excision we have $H_k(X,M_0)\cong H_k(W_1,W_0)=0$.
With Poincar\'e duality and the universal coefficient theorem we
conclude $H_k(X,M_1)=0$.
\end{proof}

\begin{proof}[Proof of Theorem~\ref{thm:filling}]
By Lemmata \ref{lem:filling-pi} and~\ref{lem:filling-H}
and the relative Hurewicz theorem, the simply connected
cobordism $\{M_0,X,M_1\}$ is an $h$-cobordism. Hence, as $n\geq 3$,
it is diffeomorphic to a product $[0,1]\times M$ by the
$h$-cobordism theorem. It follows that $W$, which is
diffeomorphic to~$W_1$, is obtained from
$W_0$ by attaching this collar $[0,1]\times M$, so $W$ and $W_0$
are diffeomorphic.
\end{proof}

A closer inspection of the argument in this section immediately
yields the following generalisation of Theorem~\ref{thm:filling}.
For a classical survey on Whitehead groups and Whitehead torsion
see~\cite{miln66}.

\begin{thm}
\label{thm:filling2}
Let $(M,\xi)$ be a manifold as in Theorem~\ref{thm:homology}
with $\pi_1(M)$ finite. Then the symplectically aspherical
fillings $(W,\omega)$ of $(M,\xi)$ for which the inclusion
$M\rightarrow W$ is $\pi_1$-injective (which by Theorem~\ref{thm:homology}
are all symplectically aspherical fillings when $\pi_1(M)$ is
finite abelian) are pairwise homotopy equivalent. If, in addition, $n\geq 3$
and the fundamental group $\pi_1(M)$ has trivial Whitehead group
$\Wh(\pi_1(M))=0$, then these fillings are pairwise diffeomorphic.
\end{thm}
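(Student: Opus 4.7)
The plan is to adapt the proof of Theorem~\ref{thm:filling} by lifting the whole cobordism diagram of Section~\ref{section:filling} to universal covers, where simple connectivity lets us reuse the earlier argument verbatim, and then to invoke the $s$-cobordism theorem in place of the $h$-cobordism theorem to obtain the smooth classification.

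First I would repeat the construction of Section~\ref{section:filling}: place $W$ and $W_0$ inside a common ambient split Stein manifold, build the collared filling $W_1\simeq W$ and the cobordism $X=W_1\setminus\Int(W_0)$ between two copies $M_0,M_1$ of~$M$. The $\pi_1$-injectivity hypothesis on $M\hookrightarrow W$, combined with $\pi_1$-surjectivity from Theorem~\ref{thm:homology}, yields $\pi_1(M)\cong\pi_1(W)$; Lemma~\ref{lem:filling-pi} then goes through unchanged to show that every inclusion in the cobordism diagram is a $\pi_1$-isomorphism. In particular $\pi_1(X)\cong\pi_1(M_i)\cong\pi_1(M)$ is finite, so the universal covers $\widetilde{M_i}\to M_i$, $\widetilde{W_i}\to W_i$ and $\widetilde{X}\to X$ are finite and fit together compatibly, with $\widetilde{W_1}=\widetilde{W_0}\cup\widetilde{X}$ glued along $\widetilde{M_0}$.

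Next I would verify that Theorem~\ref{thm:homology} applies on the cover. Pulling back symplectic and Stein data, $\widetilde{W_0}$ is a subcritical Stein filling of the simply connected closed contact manifold~$\widetilde{M}$ (the plurisubharmonic potential and its subcritical handle decomposition lift, preserving Morse indices and hence the homotopical-dimension bound~$\ell_0$), and $\widetilde{W}$, $\widetilde{W_1}$ are symplectically aspherical fillings of~$\widetilde{M}$ because $\pi_2$ is unchanged by finite covers. Running the argument of Lemma~\ref{lem:filling-H} verbatim on these covers yields $H_k(\widetilde{X},\widetilde{M_i};\Z)=0$ for all $k$ and $i=0,1$. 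Since $\widetilde{X}$ and $\widetilde{M_i}$ are simply connected, the relative Hurewicz theorem (applied inductively in~$k$) gives $\pi_k(\widetilde{X},\widetilde{M_i})=0$ for all~$k$, and hence $\pi_k(X,M_i)=0$ for $k\geq 2$. Combined with the $\pi_1$-isomorphism, the inclusions $M_i\hookrightarrow X$ are weak, hence genuine, homotopy equivalences, so $X$ is an $h$-cobordism. The homotopy pushout $W_1=W_0\cup_{M_0}X$ is then homotopy equivalent to~$W_0$, which together with $W_1\simeq W$ proves pairwise homotopy equivalence.

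For the diffeomorphism statement, with $n\geq 3$ the boundary manifolds have dimension $2n-1\geq 5$, so the $s$-cobordism theorem applies. The Whitehead torsion $\tau(X,M_0)$ lives in $\Wh(\pi_1(M))=0$ and hence vanishes, so $X$ is diffeomorphic to the product $M\times[0,1]$ and consequently $W\cong W_1\cong W_0$ as smooth manifolds. The main technical point to check is that the Stein structure and the bound~$\ell_0$ on the CW-dimension genuinely survive the lift to the finite cover, so that Theorem~\ref{thm:homology} and the $\ell_0$-dependent parts of Lemma~\ref{lem:filling-H} can be quoted in the covering setup; once that is in hand the argument of Section~\ref{section:filling} transplants directly.
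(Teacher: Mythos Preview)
Your proposal is correct and follows essentially the same route as the paper's own proof: lift the cobordism diagram to the (finite) universal covers, apply the simply connected argument there to see that $\{\widetilde{M_0},\widetilde{X},\widetilde{M_1}\}$ is an $h$-cobordism, descend to conclude that $\{M_0,X,M_1\}$ is an $h$-cobordism, and then invoke the $s$-cobordism theorem under the Whitehead hypothesis. Your worry about whether the subcritical Stein structure and the homotopical bound $\ell_0$ survive the finite cover is unfounded---the plurisubharmonic potential pulls back with the same Morse indices, and a finite cover of an $\ell_0$-dimensional CW complex is again $\ell_0$-dimensional---so you may simply quote Theorem~\ref{thm:homology} on the cover without further comment, exactly as the paper does.
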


\begin{proof}
As observed in the proof of Lemma~\ref{lem:filling-pi},
the inclusion $M\rightarrow W_0$ into the subcritical
Stein filling is always $\pi_1$-isomorphic by a general
position argument. The same is true for the inclusion $M\rightarrow W$
by assumption and Lemma~\ref{lem:filling-pi}. It follows
that the (compact!)\ universal cover of $(M,\xi)$ is filled
by either of the universal covers of $(W_0,\omega_0)$ and $(W,\omega)$.

As before, we now investigate the cobordism $\{M_0,X,M_1\}$.
Again by Lemma~\ref{lem:filling-pi}, here too we can pass
to the compact cobordism
$\{\wtM_0,\wtX,\wtM_1\}$ of universal covers.
Our previous argument shows that this is an $h$-cobordism,
hence so is $\{M_0,X,M_1\}$. It follows that $W_1$, which is
a diffeomorphic copy of~$W$, is homotopy equivalent to~$W_0$,
since $W_1=W_0\cup_{M_0}\{M_0,X,M_1\}$.

Under the assumption $\Wh(\pi_1(M))=0$, the cobordism
$\{M_0,X,M_1\}$ is an $s$-cobordism,
and hence trivial for $n\geq 3$.
\end{proof}
\section{Coverings}
\label{section:coverings}
Starting from a symplectically aspherical filling
$(W,\omega)$ of $(M,\xi)$ as in Theorem~\ref{thm:homology},
we now analyse the argument for a
covering $W'\rightarrow W$.
This covering is not assumed to be finite, so it includes the case
of the universal covering $\wtW\rightarrow W$ when
$\pi_1(W)$ has infinite order. Our main applications will
concern the situation when the universal cover $\wtW_0$
of the presumed subcritical Stein filling $(W_0,\omega_0)$
of $(M,\xi)$ is contractible.

We shall assume throughout that $W'$ is connected.
The covering $W'\rightarrow W$ induces a covering
$\partial W'=:M'\rightarrow M$.
Since, by Theorem~\ref{thm:homology}~(b), the inclusion $M\rightarrow W$
is $\pi_1$-surjective, the manifold $M'$ must likewise be connected,
as is seen by a standard covering space argument.

As observed in the proof of Lemma~\ref{lem:filling-pi}, the
inclusion $M\rightarrow W_0$ is $\pi_1$-isomorphic thanks to $W_0$
being contractible onto its skeleton of dimension at most~$n-1$.
So there is a covering $W_0'\rightarrow W_0$ inducing the
covering $M'\rightarrow M$ on the boundary, and a corresponding
covering $V'\rightarrow V$ in the notation of
Section~\ref{subsection:completion}.
Given this information, we can then define symplectic
manifolds $(Z',\Omega')$ and $(\hat{Z}',\hat{\Omega}')$
in complete analogy with the construction in
that section. Write $J'$ for the lifted almost complex structure
on $\hat{Z}'$. Then $J'$ is uniformly tamed by $\hat{\Omega}'$,
and the metric $g':=\hat{\Omega}'(\,.\,,J'\,.\,)$ is complete and admits
both an upper bound on the sectional curvature and a positive lower
bound on the injectivity radius, that is,
$(\hat{Z}',\hat{\Omega}')$ is geometrically bounded in the
sense of \cite[Definition~2.2.1]{alp94}.

As before, we define a moduli space $\MM'$ of holomorphic
spheres $u'\co\CP^1\rightarrow (\hat{Z}',J')$ subject to
the analogous conditions (M1) and (M2). The composition of
such holomorphic spheres with the covering map
$\mfp\co \hat{Z}'\rightarrow\hat{Z}$
defines a covering $\MM'\rightarrow\MM$.

Proposition~\ref{prop:ev} holds unchanged; we need only establish
properness of the evaluation map in the new setting.

\begin{lem}
The evaluation map
\[ \begin{array}{rccc}
\ev'\co & \MM'\times\CP^1 & \longrightarrow & \hat{Z}'\\
        & (u',z)          & \longmapsto     & u'(z)
\end{array}\]
is proper.
\end{lem}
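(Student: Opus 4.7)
The plan is to reduce properness of $\ev'$ upstairs to properness of $\ev$ downstairs (already established in Proposition~\ref{prop:ev}) by means of the covering $\mfp \colon \hat{Z}' \to \hat{Z}$, and then to lift the downstairs limit using the simple connectedness of~$\CP^1$. Let $K' \subset \hat{Z}'$ be compact, and consider a sequence $(u'_\nu,z_\nu)$ in $\MM' \times \CP^1$ with $u'_\nu(z_\nu) \in K'$. Setting $u_\nu := \mfp \circ u'_\nu$ gives a sequence in $\MM$, and the points $u_\nu(z_\nu) = \mfp(u'_\nu(z_\nu))$ lie in the compact set $\mfp(K') \subset \hat{Z}$.

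First I would apply Proposition~\ref{prop:ev} to extract a subsequence with $(u_\nu,z_\nu) \to (u_\infty,z_\infty)$ in $\MM \times \CP^1$; in particular $u_\nu \to u_\infty$ in $C^\infty(\CP^1,\hat{Z})$ and $z_\nu \to z_\infty$. After a further subsequence I may assume $u'_\nu(z_\nu) \to p_\infty \in K'$, which necessarily satisfies $\mfp(p_\infty) = u_\infty(z_\infty)$. Since $\CP^1$ is simply connected, there is a unique continuous lift $u'_\infty \colon \CP^1 \to \hat{Z}'$ of $u_\infty$ with $u'_\infty(z_\infty) = p_\infty$; because $\mfp$ is a local biholomorphism intertwining $J$ and $J'$, the map $u'_\infty$ is $J'$-holomorphic. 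Conditions (M1) and (M2) for $u'_\infty$ will follow from their counterparts for $u'_\nu$, once $C^0$-convergence $u'_\nu \to u'_\infty$ is established: (M2) passes to the limit pointwise, and (M1) is preserved by $C^0$-small perturbations of spheres.

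The key step is therefore to upgrade the downstairs convergence to convergence $u'_\nu \to u'_\infty$ in $\hat{Z}'$. I would cover the compact image $u_\infty(\CP^1)$ by finitely many open sets over which $\mfp$ is evenly covered, then use the uniform convergence $u_\nu \to u_\infty$ to ensure that for large $\nu$ each $u_\nu$ factors through this cover in the same combinatorial pattern. The anchoring condition $u'_\nu(z_\nu) \to u'_\infty(z_\infty)$ selects the correct sheet at~$z_\nu$, and sheet continuity over the connected domain $\CP^1$ forces $u'_\nu$ to agree with the unique lift of $u_\nu$ starting from this sheet; these unique lifts depend continuously on the data in $C^\infty$, so $u'_\nu \to u'_\infty$ smoothly.

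The main obstacle, and the reason the geometric boundedness of $(\hat{Z}',\hat{\Omega}')$ is highlighted in the setup, is the worry that sphere bubbling could occur upstairs even though it does not downstairs. As a safety check (or as an alternative route), I would run Gromov compactness directly in the geometrically bounded manifold $(\hat{Z}',\hat{\Omega}',J')$ in the sense of \cite{alp94}: the symplectic area of $u'_\nu$ equals that of $u_\nu$ since $\mfp^*\hat{\Omega} = \hat{\Omega}'$, hence is a fixed constant, and the evaluation points lie in the compact set~$K'$, so a Gromov-convergent subsequence to a stable map exists. Composing that stable-map limit with $\mfp$ would produce a stable-map limit of $(u_\nu)$ in $\hat{Z}$ with the same combinatorial type (since $\mfp$ is a local diffeomorphism, no nodes collapse or split under projection); but Proposition~\ref{prop:ev} has already shown that no bubbles form downstairs. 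Hence no bubbles upstairs, the limit is a single $J'$-holomorphic sphere, and properness of $\ev'$ follows.
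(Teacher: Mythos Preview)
Your primary argument---lifting the downstairs limit $u_\infty$ through the covering via simple connectedness of $\CP^1$, and then matching sheets to deduce $u'_\nu \to u'_\infty$---is correct and is genuinely different from the paper's route. The paper does not lift the limit at all; instead it observes that $\mfp$ is a local isometry for the metrics $g'$ and~$g$, so $|Tu'_\nu| = |Tu_\nu|$ pointwise, and the $C^\infty$-convergence $u_\nu \to u_\infty$ gives a uniform $C^0$-bound on $Tu'_\nu$. Integrating along geodesics in $\CP^1$ then yields a uniform diameter bound on the images $u'_\nu(\CP^1)$, confining them to a fixed metric ball in the complete manifold $(\hat{Z}',g')$; Gromov compactness is then run inside that compact set and bubbling is excluded as in Proposition~\ref{prop:ev}. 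Your covering-space argument is more elementary in that it avoids re-running Gromov compactness upstairs and does not actually use geometric boundedness; the paper's argument is shorter to write down and makes transparent why the geometric boundedness hypothesis was introduced.

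One caution about your ``safety check'' alternative: having bounded energy and a single evaluation point in the compact set $K'$ is not by itself enough to invoke Gromov compactness in a non-compact target---one still needs the entire images $u'_\nu(\CP^1)$ to stay in a fixed compact set. This confinement is precisely what the paper's diameter estimate supplies, and it is the real reason geometric boundedness (in particular, completeness of $g'$) enters. Your misdiagnosis that the obstacle is ``bubbling upstairs but not downstairs'' is off the mark: bubbling upstairs would project to bubbling downstairs since $\mfp$ is a local diffeomorphism, so that cannot happen; the genuine issue is escape to infinity, and your sheet-lifting argument handles it cleanly.
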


\begin{proof}
Given a compact subset $K\subset \hat{Z}'$, consider
a sequence $(u_{\nu}',z_{\nu})$ in $(\ev')^{-1}(K)$.
We may assume that $z_{\nu}\rightarrow z_0\in\CP^1$ and
$u_{\nu}'(z_{\nu})\rightarrow p_0'\in K$ for $\nu\rightarrow\infty$.
By Proposition~\ref{prop:ev}, we may further assume that
the sequence $u_{\nu}:=\mfp\circ u_{\nu}'$ of holomorphic
curves in $\hat{Z}$ is $C^{\infty}$-convergent.

For a given $w\in\CP^1$, let $\gamma_{\nu}$ be a unit speed
geodesic in $\CP^1$ of length $L_{\nu}\leq\pi/2$ with
respect to the Fubini--Study metric, connecting $w$ with~$z_{\nu}$.
Then, thanks to $(\hat{Z}',\hat{\Omega}')$
being geometrically bounded,
the distance between $u_{\nu}'(w)$ and $u_{\nu}'(z_{\nu})$
in $\hat{Z}'$ with respect to the metric $g'$
can be estimated from above by
\[ \mathrm{dist}\bigl((u_{\nu}'(w),u_{\nu}'(z_{\nu})\bigr)\leq
\int_0^{L_{\nu}}\Bigl|\frac{\rmd}{\rmd t}(u_{\nu}'\circ\gamma_{\nu})\Bigr|
\,\rmd t\leq
\mathrm{const.} \cdot \|Tu_{\nu}\|_{C^0}\leq\mathrm{const.},\]
with constants that do not depend on~$w$.
Hence, the images $u_{\nu}'(\CP^1)$ are all contained
in a sufficiently large closed metric ball about $p_0'$,
which is a compact subset of the complete Riemannian manifold
$(\hat{Z}',g')$.

This guarantees the existence of a Gromov-convergent subsequence
of $(u_{\nu}')$.
The argument then concludes as in the proof of Proposition~\ref{prop:ev}.
\end{proof}

The arguments in Sections~\ref{subsection:hom-epi}
and~\ref{subsection:proof-homology-a} then go
through, \emph{mutatis mutandis}, for the covering spaces, with the proviso
that cohomology be replaced by cohomology with compact supports
in all arguments requiring (implicitly or explicitly) Poincar\'e
duality. See \cite[pp.~242--249]{hatc02} for a good exposition
of Poincar\'e duality in this context.

In particular, again we obtain a homology epimorphism
$H_k(V')\rightarrow H_k(W')$ for all~$k$. The following proposition
is the simplest consequence of this fact, but one that has
wide-ranging applications, as we shall see.

\begin{prop}
\label{prop:contractible}
With $M,W_0,W$ as in Theorem~\ref{thm:homology},
suppose that the inclusion $M\rightarrow W$ is $\pi_1$-injective
and $\wtW_0$ is contractible. Then $\wtW$ is likewise contractible.
\end{prop}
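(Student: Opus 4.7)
The plan is to apply the covering-space version of the homology epimorphism (the analogue of Proposition~\ref{prop:hom-epi} established in this section for the cover $W'\to W$) to the \emph{universal} cover, after first identifying the corresponding covers on the boundary and on the subcritical filling.

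First I would pin down the fundamental-group picture. By Theorem~\ref{thm:homology}~(b), the inclusion $M\hookrightarrow W$ is $\pi_1$-surjective; together with the $\pi_1$-injectivity hypothesis this makes it $\pi_1$-isomorphic. On the subcritical side, the general-position argument (g.p.)\ recalled in the proof of Lemma~\ref{lem:filling-pi} shows that $M\hookrightarrow W_0$ is likewise $\pi_1$-isomorphic, since $W_0$ retracts onto a skeleton of dimension $\leq n-1$ and $2n-1\geq n+2$ for $n\geq 2$. Consequently the natural maps
\[
\pi_1(M)\stackrel{\cong}{\longrightarrow}\pi_1(W_0)\stackrel{\cong}{\longleftarrow}\pi_1(V_0)\stackrel{\cong}{\longrightarrow}\pi_1(V),
\]
(using $W_0\simeq V_0$ and $V_0\hookrightarrow V$ a homotopy equivalence) are all isomorphisms, and all coincide, via the boundary, with $\pi_1(W)$.

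Now I take $W':=\wtW$, the universal cover of $W$. Because $M\hookrightarrow W$ is $\pi_1$-isomorphic, the induced boundary cover is the universal cover $\wtM$ of~$M$. Transporting to the subcritical side through the identifications above, the corresponding cover $W_0'\to W_0$ is the universal cover $\wtW_0$, and the corresponding cover $V'\to V$ is $\wtV$. Applying the covering-space homology epimorphism established earlier in the section, we obtain surjective homomorphisms
\[
H_k(\wtV)\longtwoheadrightarrow H_k(\wtW)\qquad\text{for all }k\geq 0.
\]

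The remaining step is almost automatic. Since $V_0\hookrightarrow V$ is a homotopy equivalence and $W_0=V_0\times D^2$ retracts onto $V_0$, we have $\wtV\simeq\wtV_0\simeq\wtW_0$, which is contractible by hypothesis. Hence $H_k(\wtV)=0$ for $k\geq 1$, and the epimorphism forces $H_k(\wtW)=0$ for $k\geq 1$. Being a universal cover, $\wtW$ is simply connected, and it has the homotopy type of a CW complex (as a manifold with boundary). By the Hurewicz theorem applied inductively, all homotopy groups of $\wtW$ vanish, and Whitehead's theorem then yields that $\wtW$ is contractible. The main delicate point here is just making sure that the covers on $W$, on $M$, on $W_0$ and on $V$ are matched consistently, which is exactly what the $\pi_1$-isomorphisms above guarantee; everything else reduces to the homology epimorphism plus a standard Hurewicz/Whitehead argument.
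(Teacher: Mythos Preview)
Your argument is correct and follows essentially the same route as the paper: use the $\pi_1$-isomorphism to identify the universal covers of $M$, $W$, $W_0$ and $V$, invoke the covering-space homology epimorphism to get $H_k(\widetilde{V})\twoheadrightarrow H_k(\widetilde{W})$, and conclude from $\widetilde{V}\simeq\widetilde{W}_0\simeq *$ that $\widetilde{W}$ is simply connected with trivial reduced homology, hence contractible. One small slip: the inequality $2n-1\geq n+2$ you quote actually needs $n\geq 3$; the correct general-position bound (as in the proof of Lemma~\ref{lem:filling-pi}) is $k+(n-1)<2n$ for $k\in\{1,2\}$, which holds for all $n\geq 2$, so your conclusion is unaffected.
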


\begin{proof}
Under the assumption that the inclusion $M\rightarrow W$ is
$\pi_1$-injective and hence, by Theorem~\ref{thm:homology}~(b),
$\pi_1$-isomorphic, the universal covering $\wtW\rightarrow W$
restricts to the universal covering $\wtM\rightarrow M$
on the boundary. The assumption on $\wtW_0$ being
contractible is the same as saying that the universal cover
$\widetilde{V}$ is contractible. The mentioned homology epimorphism
then implies that $\wtW$ is a simply connected space with vanishing
reduced homology, and hence contractible.
\end{proof}

In Section~\ref{section:simple} we shall make use of the following lemma.

\begin{lem}
\label{lem:M0-sur-X}
With $M,W_0,W$ as in Theorem~\ref{thm:homology},
suppose that the inclusion $M\rightarrow W$ is $\pi_1$-injective.
Then the inclusion $\wtM_0\rightarrow\wtX$ induces a surjective
homomorphism in homology.
\end{lem}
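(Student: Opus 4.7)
My plan is to run Mayer--Vietoris on the decomposition $\wtW_1=\wtW_0\cup\wtX$ with intersection $\wtM_0$, which reduces the lemma to showing that the inclusion-induced map $j_{0*}\colon H_*(\wtW_0)\to H_*(\wtW_1)$ is surjective in every degree. First I would check that this decomposition actually lifts coherently to universal covers: under the $\pi_1$-injectivity hypothesis on $M\hookrightarrow W$, Lemma~\ref{lem:filling-pi} gives that every inclusion in the cobordism diagram is $\pi_1$-isomorphic, so the universal cover $\wtW_1\to W_1$ restricts to the universal covers of $W_0$, $X$, $M_0$, $M_1$, $V_0$ and~$V_1$. Thickening the codimension-zero pieces slightly, the Mayer--Vietoris sequence
\[
H_k(\wtM_0)\xrightarrow{(i_{0*},i_{X*})}H_k(\wtW_0)\oplus H_k(\wtX)\xrightarrow{j_{0*}-j_{X*}}H_k(\wtW_1)
\]
is exact. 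Given $a\in H_k(\wtX)$, if one finds $b\in H_k(\wtW_0)$ with $j_{0*}(b)=j_{X*}(a)$, then $(b,a)\in\ker(j_{0*}-j_{X*})$ lifts by exactness to some $\alpha\in H_k(\wtM_0)$ with $i_{X*}(\alpha)=a$, which is what we need. Hence surjectivity of $j_{0*}$ suffices.

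To produce that surjectivity, I would invoke the covering analogue of Proposition~\ref{prop:hom-epi} established in Section~\ref{section:coverings}: the evaluation map yields a surjection $H_k(\widetilde{V})\to H_k(\wtW_1)$ in all degrees, realized (after deformation retracting the completion $\wZ_1$ onto $\wtW_1$) by the inclusion $\widetilde{V}\cong\widetilde{V}_1\hookrightarrow\wtM_1\hookrightarrow\wtW_1$ coming from the lift of $V\times\{1\}$. Now the alternative embedding of $W_0\hookrightarrow W_1$ used in the cobordism diagram sends $V_0$ diffeomorphically to $V_1\subset M_1$ and is isotopic within $W_1$ to the original embedding. Lifting this isotopy to $\wtW_1$, the two maps $\widetilde{V}\to\widetilde{V}_1\hookrightarrow\wtW_1$ and $\widetilde{V}\cong\widetilde{V}_0\hookrightarrow\wtW_0\hookrightarrow\wtW_1$ become homotopic, so the covering epimorphism factors up to homotopy through $\wtW_0\hookrightarrow\wtW_1$. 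Surjectivity of $j_{0*}$ follows.

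The main technical hurdle I anticipate is ensuring that this isotopy lifts to a homotopy in the universal cover between lifts of $\widetilde{V}$ whose images in $\wtW_1$ agree, rather than differing by a nontrivial deck transformation. This is guaranteed by the observation, already exploited in step~(iii) of the proof of Lemma~\ref{lem:filling-pi}, that both embeddings of $W_0$ into $W_1$ induce the same homomorphism on $\pi_1$; consequently a compatible choice of base-point paths along the isotopy lifts to a genuine homotopy in $\wtW_1$, and the argument goes through.
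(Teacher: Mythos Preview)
Your argument is correct and follows essentially the same route as the paper. Both proofs hinge on the surjectivity of $j_{0*}\colon H_k(\wtW_0)\to H_k(\wtW_1)$, obtained by factoring the covering epimorphism $H_k(\widetilde{V}_1)\to H_k(\wtW_1)$ of Section~\ref{section:coverings} through $\wtW_0$ via the (lifted) alternative embedding in the cobordism diagram; the only cosmetic difference is that the paper then passes from surjectivity of $j_{0*}$ to surjectivity of $H_k(\wtM_0)\to H_k(\wtX)$ via excision $H_k(\wtX,\wtM_0)\cong H_k(\wtW_1,\wtW_0)$ and the long exact sequence of the pair, whereas you repackage the same computation as a Mayer--Vietoris diagram chase. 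Your explicit treatment of the deck-transformation ambiguity when lifting the isotopy is a point the paper leaves implicit.
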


\begin{proof}
The assumption on $\pi_1$ allows us to pass to universal
covers in the cobordism diagram in Section~\ref{section:filling}.
Consider the following commutative diagram with exact rows:
\begin{diagram}
H_k(\wtM_0) & \rTo       & H_k(\wtX)   & \rTo^i     & H_k(\wtX,\wtM_0)\\
\dTo        &            & \dTo        &            & \dTo_{\cong}\\
H_k(\wtW_0) & \rTo^{j_1} & H_k(\wtW_1) & \rTo^{j_2} & H_k(\wtW_1,\wtW_0)
\end{diagram}
The vertical map on the right is the excision isomorphism. The homology
epimorphism $H_k(V')\rightarrow H_k(W')$ for any covering gives us, in
particular, an epimorphism $H_k(\widetilde{V}_1)\rightarrow H_k(\wtW_1)$.
From the cobordism diagram we then see
that $j_1$ is likewise surjective, and hence $j_2$ the zero
homomorphism. This in turn implies that $i$ is the zero
homomorphism.
\end{proof}
\section{Handle decompositions}
\label{section:handle}
In this section we discuss the homotopy and diffeomorphism
classification of fillings of a given closed, connected
contact manifold $(M,\xi)$  of dimension $2n-1$ under the assumption that
information is given on the maximal index in a handle
decomposition of the filling. Applications include the homotopy
classification of subcritical Stein fillings.

Thus, let $M,W,W_0$ be as in Theorem~\ref{thm:homology},
with the additional assumption that $W$
has a handle decomposition involving handles of
index $\leq\ell$ only. Then a general position argument as in the
proof of Lemma~\ref{lem:filling-pi} yields the following
result.

\begin{lem}
\label{lem:rel-pi-H}
For $k\leq 2n-1-\ell_0$, the relative groups $\pi_k(W_0,M)$ and
$H_k(W_0,M)$ are trivial.
The same is true for the relative groups $\pi_k(W_1,X)$ and $H_k(W_1,X)$.
For $k\leq 2n-1-\ell$, the relative groups $\pi_k(W,M)$ and
$H_k(W,M)$ are trivial.
\qed
\end{lem}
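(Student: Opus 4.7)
Plan for Lemma~\ref{lem:rel-pi-H}:

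The argument refines the general position approach used in Lemma~\ref{lem:filling-pi}, applied to each of the three pairs.

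For the pair $(W,M)$, a Morse function adapted to the given handle decomposition has all critical points in $\Int(W)$, of index at most~$\ell$, with $M$ as top level set. The union of the descending manifolds is a spine $\Sigma\subset\Int(W)$ of dimension at most~$\ell$, and the positive gradient flow deformation retracts $W\setminus\Sigma$ onto~$M$. Given any map $\phi\co (D^k,S^{k-1})\to (W,M)$ with $k\leq 2n-1-\ell$, a small perturbation (keeping $\phi|_{S^{k-1}}$ inside~$M$) makes $\phi$ transverse to~$\Sigma$; since $k+\ell<2n$, the perturbed $\phi$ misses $\Sigma$ entirely, and the gradient flow then carries the image into~$M$. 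This yields $\pi_k(W,M)=0$. By Theorem~\ref{thm:homology}~(b), $M\hookrightarrow W$ is $\pi_1$-surjective, so iterated relative Hurewicz (or, equivalently, the fact that an $N$-connected map induces a homology isomorphism through degree~$N$) converts the homotopy vanishing into $H_k(W,M)=0$ for $k\leq 2n-1-\ell$.

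For the pair $(W_0,M)$, Cieliebak's splitting yields $W_0\cong V_0\times D^2$ with $V_0$ a Stein domain of homotopical dimension~$\ell_0$. Realising this dimension geometrically via handle cancellation in $V_0$ (possible because the higher-index handles contribute trivially to the homology) produces a spine $\Sigma_0\subset V_0\times\{0\}\subset W_0$ of dimension at most~$\ell_0$. The argument above, with $\ell$ replaced by~$\ell_0$, then yields both vanishing statements. For the pair $(W_1,X)$, the positive gradient flow on $W_1\setminus\Int(W_0)$ deformation retracts $X$ onto its inner boundary component $M_0\cong M$, so $(W_1,X)\simeq(W_0,M_0)$ as pairs and the result reduces to the previous case.

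The delicate step is arranging, for $(W_0,M)$, the spine in the optimal dimension~$\ell_0$ rather than the a priori bound~$n-1$ coming from subcriticality of the Stein structure. If this handle reduction is not readily available, one can instead establish $H_k(W_0,M)=0$ for $k\leq 2n-1-\ell_0$ directly from Poincar\'e--Lefschetz duality, $H_k(W_0,M)\cong H^{2n-k}(W_0)$, combined with the fact that $H_{\ell_0}(W_0)$ is torsion-free (top homology of an $\ell_0$-dimensional $CW$ complex), and then pass to the simply connected universal cover pair $(\widetilde{W}_0,\widetilde{M})$ and apply iterated relative Hurewicz there to recover $\pi_k(W_0,M)=0$ in the same range.
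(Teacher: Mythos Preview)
Your approach---general position against a skeleton, followed by relative Hurewicz---is the one the paper intends, and for the pair $(W,M)$ your argument is correct.

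Your treatment of $(W_1,X)$ contains a slip: you assert that a gradient flow retracts $X=W_1\setminus\Int(W_0)$ onto~$M_0$, but $X$ contains a copy of~$W$ and is not a collar. The correct argument is the one spelled out under the label (g.p.) in the proof of Lemma~\ref{lem:filling-pi}: perturb a relative disc $(D^k,S^{k-1})\to(W_1,X)$ off the skeleton $\Sigma_0\subset\Int(W_0)\subset W_1$, then use the positive gradient flow \emph{on $W_0$} to push the disc into~$X$. There is no homotopy-excision shortcut reducing $(W_1,X)$ to $(W_0,M_0)$.

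You are right to flag that the subcritical Stein skeleton of $W_0$ a priori has dimension $n-1$, not~$\ell_0$; the paper's one-line proof glosses over this, and the bare general position argument only yields vanishing for $k\leq n$. Neither of your proposed fixes closes the gap, however. Handle cancellation down to index~$\ell_0$ requires control of the $\Z[\pi_1(W_0)]$-chain complex, which the hypothesis ``homotopy type of an $\ell_0$-complex'' does not directly supply. And your duality route needs $H_k(\widetilde W_0,\widetilde M)=0$, which Poincar\'e--Lefschetz on the possibly non-compact cover identifies with $H^{2n-k}_c(\widetilde W_0)$---not a homotopy invariant, so the $\ell_0$-complex hypothesis does not kill it. (For the actual applications in Section~\ref{section:handle} the weaker range $k\leq n$ already suffices, since one only needs connectivity up to degree $\ell_0\leq n-1$.)
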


In particular, for $\ell\leq 2n-3$ the inclusion $M\rightarrow W$
is $\pi_1$-isomorphic. Then, by the proof of Lemma~\ref{lem:filling-pi},
the same will be true for all the other maps in the cobordism diagram,
so that we can pass simultaneously to
the universal covers of all spaces in that diagram. From now on, this
assumption will be understood.

\begin{thm}
\label{thm:W-simeq-W0}
If $\ell_0+\max(\ell_0,\ell)\leq 2n-2$, then $W$ and $W_0$ are
homotopy equivalent.
\end{thm}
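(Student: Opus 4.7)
The plan is to show that the inclusion $\wtW_0\hookrightarrow\wtW_1$ is a weak homotopy equivalence, and then conclude via Whitehead's theorem that $W\cong W_1\simeq W_0$. Since the hypothesis $\ell_0+\max(\ell_0,\ell)\leq 2n-2$ forces $\ell\leq 2n-2-\ell_0$, the standing assumption $\ell\leq 2n-3$ is in force (the boundary case $\ell_0=0$ being handled, for instance, by Proposition~\ref{prop:contractible}), so we may pass to universal covers throughout the cobordism diagram of Section~\ref{section:filling}. Both $\wtW_0$ and $\wtW_1$ inherit handle decompositions from the base, yielding $H_k(\wtW_0)=0$ for $k>\ell_0$ and $H_k(\wtW_1)=0$ for $k>\ell$.

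The first key input is the covering-space refinement from Section~\ref{section:coverings}: the inclusion $\widetilde{V}_1\hookrightarrow\wtW_1$ induces a surjection $H_k(\widetilde{V}_1)\twoheadrightarrow H_k(\wtW_1)$ in every degree. Composing with the homotopy equivalence $\wtW_0\simeq\widetilde{V}_0\cong\widetilde{V}_1$ furnished by the alternative embedding, we obtain a surjection $H_k(\wtW_0)\twoheadrightarrow H_k(\wtW_1)$ for all $k$. The long exact sequence of $(\wtW_1,\wtW_0)$, combined with $H_{k-1}(\wtW_0)=0$ for $k-1>\ell_0$, then forces $H_k(\wtW_1,\wtW_0)=0$ for $k\geq\ell_0+2$. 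For the complementary range $1\leq k\leq\ell_0+1$ we invoke Poincar\'e--Lefschetz duality on the cobordism $\wtX$: excision gives $H_k(\wtW_1,\wtW_0)\cong H_k(\wtX,\wtM_0)$, and duality (with compactly supported cohomology, since $\wtX$ is generally non-compact) gives $H_k(\wtX,\wtM_0)\cong H_c^{2n-k}(\wtX,\wtM_1)$. The dual handle decomposition of $\wtW_1$ built from a collar of $\wtM_1$ by attaching handles of index $\geq 2n-\ell$ provides $H_j(\wtW_1,\wtM_1)=0$ for $j<2n-\ell$; together with the already known $H_\ast(\wtW_1,\wtX)\cong H_\ast(\wtW_0,\wtM_0)=0$ for $\ast\leq 2n-1-\ell_0$ (from the dual handles of $W_0$), this extracts $H_c^{2n-k}(\wtX,\wtM_1)=0$ in the required range. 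The numerical hypothesis is precisely what makes the two ranges $k\geq\ell_0+2$ and $k\leq\ell_0+1$ cover all degrees.

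With $H_\ast(\wtW_1,\wtW_0)=0$ in every degree and the $\pi_1$-isomorphism on the boundaries already in place, the relative Hurewicz theorem upgrades this to a weak homotopy equivalence $\wtW_0\hookrightarrow\wtW_1$, hence to a genuine homotopy equivalence by Whitehead; descending to the base yields $W_0\simeq W_1\cong W$. The main technical obstacle is making the Poincar\'e--Lefschetz argument rigorous on the generally non-compact universal cover $\wtX$: compactly supported cohomology has to be handled systematically (as flagged already in Section~\ref{section:coverings}), and one must verify carefully that the dual handle decomposition of $W_1$ lifts to the cover and yields the required cohomological vanishing. The numerical hypothesis $\ell_0+\max(\ell_0,\ell)\leq 2n-2$ is calibrated precisely so that the ``homological'' and ``Lefschetz-dual'' ranges overlap.
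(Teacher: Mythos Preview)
Your overall architecture agrees with the paper: show $H_*(\wtW_1,\wtW_0)=0$ on universal covers, excise to $(\wtX,\wtM_0)$, apply relative Hurewicz, and finish with Whitehead. Your high-range argument ($k\geq\ell_0+2$) via the covering-space homology epimorphism is also the paper's; indeed the paper simply records $H_k(\wtW_0)=0=H_k(\wtW_1)$ for $k>\ell_0$, the second vanishing coming from that epimorphism since $\widetilde{V}$ has homotopical dimension~$\ell_0$.

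The low range $k\leq\ell_0+1$ is where you diverge, and your Poincar\'e--Lefschetz detour is both unnecessary and not fully justified. After dualising $H_k(\wtX,\wtM_0)\cong H_c^{2n-k}(\wtX,\wtM_1)$ you feed in \emph{ordinary} homology vanishing for $(\wtW_1,\wtM_1)$ and $(\wtW_1,\wtX)$. On a non-compact cover these are different theories, and the triple argument you gesture at produces $H_j(\wtX,\wtM_1)=0$ only in a \emph{low} range of~$j$, not the vanishing of $H_c^j(\wtX,\wtM_1)$ in the high range $j\geq 2n-\ell_0-1$ that you actually need. Your closing remark about the two ranges ``overlapping'' is also off: $k\geq\ell_0+2$ and $k\leq\ell_0+1$ are complementary regardless of any hypothesis, so the numerical inequality cannot be entering where you say it does.

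The paper bypasses all of this. Lemma~\ref{lem:rel-pi-H} already shows, by general position against the skeleta, that every map in the cobordism diagram---including on universal covers---is a $\pi_k$- and $H_k$-isomorphism for $k\leq 2n-2-\max(\ell_0,\ell)$. The numerical hypothesis $\ell_0+\max(\ell_0,\ell)\leq 2n-2$ is \emph{exactly} the statement that this range reaches up to $k=\ell_0$, so $\wtW_0\hookrightarrow\wtW_1$ is an $H_k$-isomorphism for $k\leq\ell_0$. Combined with the vanishing above~$\ell_0$, one gets $H_*(\wtW_1,\wtW_0)=0$ immediately, with no duality on the non-compact cover required.
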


\begin{proof}
From the lemma it follows that all maps in the cobordism diagram,
also at the level of universal covers,
are $\pi_k$- and $H_k$-isomorphic for $k\leq 2n-2-\max(\ell_0,\ell)$.
By the assumption in the theorem, this holds in particular
for $k\leq\ell_0$. 

This implies that the inclusion $\wtW_0\rightarrow\wtW_1$
induces an isomorphism in homology in all degrees, since
\[ H_k(\wtW_0)=0=H_k(\wtW_1)\;\;\text{for}\;\; k\geq\ell_0+1;\]
for $\wtW_0$ this follows from the homotopical assumptions;
for $\wtW_1\cong\wtW$, from the homology epimorphism
in Section~\ref{section:coverings}.

The homology exact sequence
of the pair $(\wtW_1,\wtW_0)$ then shows the vanishing
of $H_k(\wtW_1,\wtW_0)$ in all degrees. By excision,
we also have $H_k(\wtX,\wtM_0)=0$ for all~$k$.

With the relative Hurewicz theorem we deduce $\pi_k(\wtX,\wtM_0)=0$
for all~$k$. Since the inclusion $M_0\rightarrow X$ is already
known to be a $\pi_1$-isomorphism, $M_0$ is a strong deformation retract
of $X$ by Whitehead's theorem \cite[Theorem~4.5]{hatc02}. Hence
\[ W\simeq W_1=W_0\cup_{M_0}X\simeq W_0,\]
as we wanted to show.
\end{proof}

The proof of the result that all subcritical Stein fillings
of a given contact manifold are homotopy equivalent is
now straightforward.

\begin{proof}[Proof of Theorem~\ref{thm:scS-homotopy}]
For a subcritical filling we have $\ell\leq n-1$, and hence
$\ell_0+\max(\ell_0,\ell)\leq 2n-2$.
\end{proof}

The required estimate is also satisfied for $\ell\leq n$,
provided that $\ell_0\leq n-2$. This gives
the following corollary.

\begin{cor}
If $(M,\xi)$ admits a subcritical Stein filling with $\ell_0\leq n-2$,
then all Stein fillings (including critical ones) are homotopy equivalent.
\end{cor}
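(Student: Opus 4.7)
The plan is to apply Theorem~\ref{thm:W-simeq-W0} directly, with essentially no additional work beyond verifying its hypotheses.

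First, every Stein filling $(W,\omega)$ of $(M,\xi)$ admits a handle decomposition whose handles have index at most $n$, by the classical Eliashberg--Gompf description of Stein manifolds via plurisubharmonic Morse functions. Thus we may take $\ell = n$ in the framework of Section~\ref{section:handle}. Combining this with the hypothesis $\ell_0\leq n-2$ yields
\[ \ell_0+\max(\ell_0,\ell)\;=\;\ell_0+n\;\leq\;(n-2)+n\;=\;2n-2,\]
which is precisely the arithmetic condition required for Theorem~\ref{thm:W-simeq-W0}.

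Second, we check that the ambient hypotheses of Theorem~\ref{thm:W-simeq-W0} (inherited from Theorem~\ref{thm:homology}) hold. The subcritical Stein filling with $\ell_0\leq n-2$ is \emph{a fortiori} a subcritical Stein filling with $\ell_0\leq n-1$, so the hypothesis on $(M,\xi)$ is met. Any Stein filling is exact, hence symplectically aspherical, so $(W,\omega)$ qualifies as a filling to which Theorem~\ref{thm:homology} and Theorem~\ref{thm:W-simeq-W0} apply. The condition $\ell\leq 2n-3$ needed to pass to universal covers in Section~\ref{section:coverings} holds as soon as $n\geq 3$; the case $n=2$ forces $\ell_0=0$, so $(M,\xi)=(S^3,\xist)$ and all Stein fillings are diffeomorphic to $D^4$ by the theorem of Gromov--McDuff cited in the introduction, so the corollary is trivial in that dimension.

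Applying Theorem~\ref{thm:W-simeq-W0} now yields that $W$ is homotopy equivalent to $W_0$. Since any two Stein fillings are each homotopy equivalent to the fixed subcritical filling $W_0$, they are homotopy equivalent to each other, as claimed. There is no real obstacle: the content of the corollary is merely the observation that strengthening the subcritical hypothesis from $\ell_0\leq n-1$ to $\ell_0\leq n-2$ leaves exactly enough slack in the inequality $\ell_0+\max(\ell_0,\ell)\leq 2n-2$ to accommodate a Stein handle of critical index~$n$ in~$W$.
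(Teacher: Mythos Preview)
Your proof is correct and follows the same approach as the paper: the corollary is deduced from Theorem~\ref{thm:W-simeq-W0} by verifying that $\ell_0+\max(\ell_0,\ell)\leq 2n-2$ when $\ell_0\leq n-2$ and $\ell\leq n$. You are in fact more careful than the paper in checking the standing hypothesis $\ell\leq 2n-3$ of Section~\ref{section:handle} and in treating the residual case $n=2$ separately via Gromov--McDuff.
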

\section{Simple spaces}
\label{section:simple}
In this section we prove Theorem~\ref{thm:simple}.
Thus, consider manifolds $M,W_0,W$ as in Theorem~\ref{thm:homology}
under the additional assumption that $M$ is a simple space
and $n\geq 3$. In particular, since the
action of $\pi_1$ on itself is given by conjugation, $\pi_1(M)$
must be abelian. Thus, by Theorem~\ref{thm:homology}~(b) and
Lemma~\ref{lem:filling-pi}, all maps in the cobordism diagram
are $\pi_1$-isomorphic, and we
can pass to universal covers. Also, from Lemma~\ref{lem:filling-H}
we know that the relative homology groups $H_k(X,M_0)$ and
$H_k(X,M_1)$ vanish for all~$k$.

The assumptions of Theorem~\ref{thm:simple} are taken for granted
in this section.

\begin{lem}
The relative homotopy groups $\pi_k(X,M_0)$ are trivial.
\end{lem}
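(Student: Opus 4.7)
The plan is to show that $H_*(\wtX,\wtM_0)=0$ and then conclude via the relative Hurewicz theorem. Since the inclusion $M_0\to X$ is $\pi_1$-isomorphic, $\pi_k(X,M_0)\cong\pi_k(\wtX,\wtM_0)$ for $k\geq 2$, and $\pi_1(X,M_0)=0$. With the relative homology of the universal-cover pair vanishing, the simply connected pair $(\wtX,\wtM_0)$ will have vanishing relative homotopy by the standard inductive use of the relative Hurewicz theorem, yielding $\pi_k(X,M_0)=0$ for all~$k$.

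To prove the vanishing of $H_*(\wtX,\wtM_0)$, set $\pi:=\pi_1(M)$ and first argue that $\pi$ acts trivially on these groups. Simplicity of $M$ implies that $\pi$ acts trivially on each $\pi_k(\wtM_0)$, and hence, by inductive use of the Hurewicz theorem on the simply connected space $\wtM_0$, on each $H_k(\wtM_0)$. The $\pi$-equivariant epimorphism $H_k(\wtM_0)\twoheadrightarrow H_k(\wtX)$ supplied by Lemma~\ref{lem:M0-sur-X} then transfers the triviality of the action to $H_k(\wtX)$. Combining this surjectivity with the long exact sequence of the pair $(\wtX,\wtM_0)$ identifies $H_k(\wtX,\wtM_0)$ as a $\pi$-submodule of $H_{k-1}(\wtM_0)$, so the action on $H_k(\wtX,\wtM_0)$ is trivial as well.

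I then feed this into the Cartan--Leray spectral sequence for the free $\pi$-action on the pair $(\wtX,\wtM_0)$,
\[
  E_2^{p,q}=H_p\bigl(\pi;H_q(\wtX,\wtM_0)\bigr)\Longrightarrow H_{p+q}(X,M_0),
\]
whose abutment vanishes by Lemma~\ref{lem:filling-H}. Triviality of the action gives $E_2^{0,q}=H_q(\wtX,\wtM_0)$. For $q\leq 1$ the group vanishes directly from the long exact sequence, since $\wtM_0$ and $\wtX$ are both connected and simply connected. For the inductive step $q\geq 2$, each source $E_r^{r,q-r+1}$ of an incoming differential to $E_r^{0,q}$, with $2\leq r\leq q+1$, lies in relative homological degree $q-r+1<q$ and hence vanishes by the inductive hypothesis (the case $r=q+1$ uses $H_0(\wtX,\wtM_0)=0$). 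Since there are no outgoing differentials from $E_r^{0,q}$ in a first-quadrant spectral sequence, $E_\infty^{0,q}=E_2^{0,q}=H_q(\wtX,\wtM_0)$, which must be zero because the abutment is.

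The main obstacle is propagating the triviality of the $\pi$-action from $\wtM_0$ to $\wtX$ and further to the relative groups $H_*(\wtX,\wtM_0)$; for this both the hypothesis that $M$ is a simple space and the covering-space homology surjectivity of Lemma~\ref{lem:M0-sur-X} are essential. Once the action is known to be trivial, the spectral sequence computation and the inductive relative Hurewicz step are routine.
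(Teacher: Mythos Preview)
Your argument has a genuine gap at the very first step: the assertion that simplicity of $M$ forces $\pi=\pi_1(M)$ to act trivially on $H_*(\wtM_0)$. The phrase ``inductive use of the Hurewicz theorem'' does not justify this; Hurewicz only identifies $\pi_k(\wtM_0)$ with $H_k(\wtM_0)$ at the lowest nonvanishing degree and says nothing about higher homology. In fact the implication fails in general. Take $Y=K(\Z,2)\times K(\Z/2,4)$ and the self-equivalence $\phi(a,b)=(a,b\cdot\mu(a))$, where $\mu\colon K(\Z,2)\to K(\Z/2,4)$ represents the mod~$2$ square of the fundamental class. Then $\phi_*$ is the identity on every $\pi_k(Y)$ (the shift $\mu$ restricts trivially to the fibre), yet on $H_4(Y;\Z)\cong\Z\oplus\Z/2$ one computes $\phi_*(u)=u+v$, where $u$ generates the free summand and $v$ the torsion. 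The mapping torus of $\phi$ is therefore a simple space whose fundamental group acts nontrivially on $H_4$ of its universal cover. Once the action is not known to be trivial, your spectral-sequence step only yields the vanishing of the coinvariants $(H_q(\wtX,\wtM_0))_\pi$, which does not give $H_q(\wtX,\wtM_0)=0$; the rest of the argument then collapses.

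The paper's proof avoids this trap by never passing through a $\pi$-action on homology. It runs an interleaved induction on~$k$: from $\pi_i(X,M_0)=0$ for $i<k$ and $H_k(X,M_0)=0$, the relative Hurewicz theorem says $\pi_k(X,M_0)$ is generated by elements $\gamma(\eta)-\eta$, and simplicity of $M_0$---applied to the \emph{homotopy} group $\pi_{k-1}(M_0)$, where it is the definition---forces the boundary $\partial\colon\pi_k(X,M_0)\to\pi_{k-1}(M_0)$ to vanish. Lifting to universal covers and using naturality of Hurewicz makes the homological boundary $H_k(\wtX,\wtM_0)\to H_{k-1}(\wtM_0)$ vanish too; combined with the surjectivity of $H_k(\wtM_0)\to H_k(\wtX)$ from Lemma~\ref{lem:M0-sur-X}, this kills $H_k(\wtX,\wtM_0)$ and hence $\pi_k(X,M_0)$. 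The crucial difference is that simplicity is invoked only on homotopy groups, one degree at a time, rather than transported wholesale to homology.
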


\begin{proof}
The statement holds for $k=0,1$, since $M_0$ and $X$ are connected, and the
inclusion map $M_0\rightarrow X$ is $\pi_1$-isomorphic.

Inductively, we assume that the vanishing of $\pi_i(X,M_0)$ has been
established for $i\leq k-1$. We want to show $\pi_k(X,M_0)=0$.

Write $\gamma(\eta)\in\pi_k(X,M_0)$ for the element obtained by
the action of $\gamma\in\pi_1(M_0)$ on $\eta\in\pi_k(X,M_0)$.
By the relative Hurewicz theorem, the Hurewicz homomorphism
\[ h_k\co \pi_k(X,M_0)\longrightarrow H_k(X,M_0)\]
is an epimorphism whose kernel is the subgroup
of $\pi_k(X,M_0)$ generated by elements of the form $\gamma(\eta)-\eta$.
Notice that the inclusion $M_0\rightarrow X$ being
$\pi_1$-isomorphic implies that $\pi_2(X,M_0)$ is isomorphic to a quotient
group of $\pi_2(X)$, and hence abelian; so are all higher relative
homology groups. Since $H_k(X,M_0)=0$, the kernel of $h_k$ is the full group.

The action of $\pi_1$, by its definition, commutes with the
boundary homomorphism $\partial\co\pi_k(X,M_0)\rightarrow\pi_{k-1}(M_0)$.
Hence
\[ \partial\bigl(\gamma(\eta)-\eta\bigr)=\gamma(\partial\eta)-
\partial\eta=0,\]
as $M_0$ is a simple space. Thus, $\partial$ is the zero homomorphism.

Consider the commutative diagram
\begin{diagram}
\pi_k(\wtX,\wtM_0) & \rTo^{\tilde{\partial}} & \pi_{k-1}(\wtM_0)\\
\dTo^{\cong}       &                         & \dTo\\
\pi_k(X,M_0)       & \rTo^{\partial=0}       & \pi_{k-1}(M_0).
\end{diagram}
The vertical homomorphism on the left is an isomorphism for
all $k\geq 2$. For $k\geq 3$, this is a general consequence of the
five-lemma; for $k=2$ one needs to use that $\partial=0$.
The vertical homomorphism on the right is an isomorphism
for $k\geq 3$; for $k=2$ we have $\pi_{k-1}(\wtM_0)=0$.
In either case we conclude that $\tilde{\partial}$ is also the
zero homomorphism.

Next we consider the commutative diagram coming
from the `homotopy-homo\-logy ladder' of the pair $(\wtX,\wtM_0)$:
\begin{diagram}
          &        & \pi_k(\wtX,\wtM_0) &
   \rTo^{\tilde{\partial}=0} & \pi_{k-1}(\wtM_0)\\
          &        & \dTo^{\cong}       &
                             & \dTo \\
H_k(\wtX) & \rTo^i & H_{k}(\wtX,\wtM_0) &
   \rTo^j                    & H_{k-1}(\wtM_0)
\end{diagram}
The vertical isomorphism on the left is the Hurewicz isomorphism.
The commutative square implies the triviality of the homomorphism~$j$.
The homomorphism $i$ was shown to be trivial in the proof
of Lemma~\ref{lem:M0-sur-X}.
We conclude $H_k(\wtX,\wtM_0)=0$, hence $\pi_k(X,M_0)=\pi_k(\wtX,\wtM_0)=0$.
\end{proof}

\begin{lem}
The relative homotopy groups $\pi_k(X,M_1)$ are trivial.
\end{lem}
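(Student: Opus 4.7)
The plan is to run the same inductive argument as in the preceding lemma, with $M_1$ playing the role of $M_0$, once the analogous homological input is in place. Specifically, the relative Hurewicz/simpleness machinery of the preceding proof needs only the vanishing of $H_k(\wtX, \wtM_1)$ for all $k$ (replacing the use of Lemma~\ref{lem:M0-sur-X}); the rest, including the induction on $k$, the simpleness of $M_1 \cong M$ to ensure that the $\pi_1$-action is trivial on $\partial$, and the vanishing of $H_k(X, M_1)$ from Lemma~\ref{lem:filling-H}, then goes through verbatim.

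To establish the required vanishing, I would first strengthen the preceding lemma: since the pair $(\wtX, \wtM_0)$ is simply connected and $\pi_k(\wtX, \wtM_0) = \pi_k(X, M_0) = 0$ for all $k$, the relative Hurewicz theorem gives $H_k(\wtX, \wtM_0) = 0$ for all $k$. Since $X$ is a compact manifold with boundary, the cellular chain complex $C_*(\wtX, \wtM_0)$ is a bounded complex of finitely generated free $\mathbb{Z}[\pi_1(M)]$-modules; being acyclic, it is chain contractible, and so is its $\mathbb{Z}[\pi_1(M)]$-linear dual, giving $H^*(X, M_0; \mathbb{Z}[\pi_1(M)]) = 0$. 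By Poincar\'e--Lefschetz duality with local coefficients on the compact oriented $2n$-manifold $X$ with $\partial X = M_0 \sqcup M_1$,
\[
H_k(\wtX, \wtM_1) \;\cong\; H^{2n-k}(X, M_0; \mathbb{Z}[\pi_1(M)]) \;=\; 0 \quad \text{for all } k.
\]
The induction then closes: either one reads the vanishing off the homotopy-homology ladder as in the preceding proof, or, equivalently, one uses that $(\wtX, \wtM_1)$ is a simply connected pair and applies iterated relative Hurewicz to pass directly from $H_k(\wtX, \wtM_1) = 0$ to $\pi_k(\wtX, \wtM_1) = \pi_k(X, M_1) = 0$ for $k \geq 2$.

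I expect the main obstacle to lie in the transfer of vanishing across the cobordism. A direct mimic of the preceding argument would require the analog of Lemma~\ref{lem:M0-sur-X}, namely surjectivity of $H_k(\wtM_1) \to H_k(\wtX)$, but this does not fall out of the homology epimorphism of Section~\ref{section:coverings} in the same way: a Mayer--Vietoris computation on $\wtW_1 = \wtW_0 \cup \wtX$ exhibits a potential kernel for $\wtX \hookrightarrow \wtW_1$ controlled by $\ker(i_0 \colon H_k(\wtM_0) \to H_k(\wtW_0))$, which need not vanish. The Poincar\'e--Lefschetz approach sidesteps this by exploiting the symmetry between the two boundary components directly at the level of the equivariant chain complex of the universal cover.
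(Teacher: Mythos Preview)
Your argument is correct, but the paper takes a shorter route that avoids Poincar\'e--Lefschetz duality with local coefficients altogether. The key observation you missed is that the \emph{first} lemma already gives you something stronger than a homological statement: since $\pi_k(X,M_0)=0$ for all~$k$, the space $X$ deformation retracts onto $M_0$, and therefore $X$ itself is a simple space. Now run the induction exactly as in the first lemma: $H_k(X,M_1)=0$ forces $\pi_k(X,M_1)$ to be generated by elements $\gamma(\zeta)-\zeta$; simpleness of $M_1$ makes $\partial\co\pi_k(X,M_1)\rightarrow\pi_{k-1}(M_1)$ trivial, so $\pi_k(X)\rightarrow\pi_k(X,M_1)$ is surjective; and now simpleness of $X$ makes the $\pi_1(M_1)$-action on $\pi_k(X)$---hence on its quotient $\pi_k(X,M_1)$---trivial, so the generators $\gamma(\zeta)-\zeta$ all vanish. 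This closes the induction with no appeal to $H_*(\wtX,\wtM_1)$ at all.

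Your duality argument is the standard $s$-cobordism move and is perfectly valid; it has the merit of not using the simpleness hypothesis in this second step (only that $M_0\hookrightarrow X$ is already known to be a homotopy equivalence), so it would transplant to other settings more readily. The paper's argument, by contrast, is tailored to the simple-space hypothesis and extracts the conclusion with less machinery: no chain-level contractibility, no $\Z[\pi_1]$-coefficients, just the observation that simpleness passes from $M_0$ to~$X$.
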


\begin{proof}
Again we argue inductively; the inductive assumption
for $k=0,1$ is satisfied. Assume that $\pi_i(X,M_1)$ vanishes for
$i\leq k-1$. From the preceding lemma we know that $M_0$
is a deformation retract of~$X$. It follows that $X$ is likewise
a simple space. As in the foregoing proof we see that $\pi_k(X,M_1)$
is generated by elements of the form $\gamma(\zeta)-\zeta$ with
$\zeta\in\pi_k(X,M_1)$ and $\gamma\in\pi_1(M_1)$. Again,
$M_1$ being simple implies the triviality of the boundary
homomorphism $\partial\co\pi_k(X,M_1)\rightarrow
\pi_{k-1}(M_1)$. Thus, the homomorphism $\pi_k(X)\rightarrow
\pi_k(X,M_1)$ is surjective. It follows that $(X,M_1)$ is simple.
With the information on the generators of $\pi_k(X,M_1)$, this
shows that $\pi_k(X,M_1)$ is trivial.
\end{proof}

\begin{proof}[Proof of Theorem~\ref{thm:simple}]
The two lemmata show that $\{M_0,X,M_1\}$ is an $h$-cobordism,
and hence an $s$-cobordism under the assumption $\Wh(\pi_1(M))=0$.
\end{proof}

For an application of this theorem see Example~\ref{ex:Q1}.
\section{Unit stabilised cotangent bundles}
\label{section:cotangent}
We now return to contact manifolds of the kind described in
Example~\ref{ex:distinguish}. Given a closed Riemannian manifold~$Q$
of dimension~$q$,
consider the unit sphere bundle $M:=S(T^*Q\oplus\C^m)$, $m\geq 1$,
of the $m$-fold stabilised cotangent bundle of~$Q$. Then
$\dim M=2n-1$ with $n=q+m$. We always
equip this manifold with the canonical contact structure $\xi$
given by the contact form
\[ \lambda_Q+\frac{1}{2}\sum_{j=1}^m(x_j\,\rmd y_j-y_j\,\rmd x_j).\]

\begin{thm}
\label{thm:cotangent1}
Let $M=S(T^*Q\oplus\C^m)$, $m\geq 1$, with its standard contact
structure~$\xi$, where $Q$ is a closed $q$-dimensional manifold
subject to the following conditions:
\begin{itemize}
\item[(i)] $Q$ is aspherical, i.e.\ the universal cover $\widetilde{Q}$
is contractible.
\item[(ii)] The fundamental group $\pi_1(Q)$ is abelian and it
has trivial Whitehead group $\Wh(\pi_1(Q))$.
\item[(iii)] $n=q+m\geq 3$, that is, $\dim M\geq 5$.
\end{itemize}
Then every symplectically aspherical filling of $(M,\xi)$
is diffeomorphic to the total space of the disc bundle
$W_0:=D(T^*Q\oplus\C^m)$.
\end{thm}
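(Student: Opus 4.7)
My first task is to identify $W_0 := D(T^*Q\oplus\C^m)$ as a subcritical Stein filling of $(M,\xi)$ and to understand its topology. The standard Weinstein structure on $T^*Q\oplus\C^m$ (the sum of the canonical structures on the two factors) has a plurisubharmonic Morse function whose critical points all lie on the zero section and have Morse index at most $q = n-m \leq n-1$, so $\ell_0 = q$ and Theorem~\ref{thm:homology} applies. Since $W_0$ deformation retracts onto $Q$ and $Q$ is aspherical, $W_0$ is a $K(\pi_1(Q),1)$ and its universal cover $\wtW_0$ is contractible. The sphere bundle $M\to Q$ has fibre of dimension $q+2m-1\geq 2$ (since $n\geq 3$ and $m\geq 1$), so $\pi_1(M)\cong\pi_1(Q)$ is abelian with vanishing Whitehead group.

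Given an arbitrary symplectically aspherical filling $(W,\omega)$, Theorem~\ref{thm:homology}(b) tells me that the inclusion $M\hookrightarrow W$ is a $\pi_1$-isomorphism, and then Proposition~\ref{prop:contractible} forces $\wtW$ to be contractible as well. This already pins down $W\simeq K(\pi_1(Q),1)\simeq W_0$ up to homotopy equivalence; the remaining task is to upgrade this to a diffeomorphism via an $s$-cobordism argument. I would set up the cobordism diagram of Section~\ref{section:filling}, writing $W_1 = W_0\cup_{M_0} X$ with $W_1$ diffeomorphic to $W$ up to a collar, and pass compatibly to universal covers of all relevant pieces throughout (all inclusions being $\pi_1$-iso by the Lemma~\ref{lem:filling-pi} argument).

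The heart of the plan is to show that $\{M_0,X,M_1\}$ is an $h$-cobordism. For the inclusion $\wtM_0\hookrightarrow\wtX$ the argument is easy: contractibility of $\wtW_0$ and $\wtW_1$ together with excision give
\[
H_k(\wtX,\wtM_0)\;\cong\; H_k(\wtW_1,\wtW_0)\;=\;0\quad\text{for all }k,
\]
and then Hurewicz and Whitehead finish things off since $\wtM_0$ and $\wtX$ are simply connected. The main obstacle is the symmetric statement for $\wtM_1\hookrightarrow\wtX$: when $q\geq 1$ the group $\pi_1(Q)$ is necessarily infinite, so the cover $\wtX$ is non-compact and ordinary Poincar\'e--Lefschetz duality does not apply to it directly. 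I would circumvent this by working on the compact base $X$ with local coefficients $\Z[\pi]$, where $\pi := \pi_1(Q)$, using the equivariant Lefschetz isomorphism
\[
H^{2n-k}(X,M_1;\Z[\pi])\;\cong\;H_k(X,M_0;\Z[\pi])\;=\;H_k(\wtX,\wtM_0)\;=\;0.
\]
The cochain complex $\mathrm{Hom}_{\Z[\pi]}\bigl(C_*(\wtX,\wtM_1),\Z[\pi]\bigr)$ computing the left-hand side is a bounded complex of finitely generated free (hence reflexive) $\Z[\pi]$-modules; its acyclicity therefore forces the acyclicity of the dualised complex, which by reflexivity is identified with $C_*(\wtX,\wtM_1)$. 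Hence $H_*(\wtX,\wtM_1)=0$, and Hurewicz--Whitehead applied again makes $\wtM_1\hookrightarrow\wtX$ a weak equivalence.

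Having secured the $h$-cobordism, the hypothesis $\Wh(\pi_1(Q))=0$ promotes it to an $s$-cobordism. Since $\dim X = 2n\geq 6$, the $s$-cobordism theorem produces a diffeomorphism $X\cong M\times[0,1]$, from which $W\cong W_1 = W_0\cup_{M_0} X\cong W_0$ follows at once.
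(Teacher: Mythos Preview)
Your proposal is correct and follows the paper's overall architecture (set up the cobordism $\{M_0,X,M_1\}$, verify contractibility of $\wtW$ via Proposition~\ref{prop:contractible}, then establish the $h$-cobordism and apply the $s$-cobordism theorem), but the treatment of the upper inclusion $\wtM_1\hookrightarrow\wtX$ is genuinely different. The paper argues geometrically: it exploits the specific homology of $\wtM$ (a sphere bundle over contractible $\widetilde{Q}$, so $H_*(\wtM)$ is that of $S^{q+2m-1}$), uses the Gysin sequence to see that the fibre class generates $H_{q+2m-1}(M)$, and then runs a diagram chase comparing the covering maps on $M_0$, $M_1$, and $X$ to conclude that $H_{q+2m-1}(\wtM_1)\to H_{q+2m-1}(\wtX)$ is an isomorphism (the other degrees being trivially zero). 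Your route is purely algebraic: equivariant Poincar\'e--Lefschetz duality on the compact triad $(X;M_0,M_1)$ with $\Z[\pi]$-coefficients transfers the vanishing of $H_*(\wtX,\wtM_0)$ to $H^*(X,M_1;\Z[\pi])$, and then the chain-contractibility of a bounded acyclic complex of finitely generated free $\Z[\pi]$-modules, together with reflexivity, yields $H_*(\wtX,\wtM_1)=0$. Your argument is the standard surgery-theoretic device and is more portable---it uses nothing about $M$ being a sphere bundle and would apply verbatim whenever $\wtW_0$ is contractible---whereas the paper's approach is more explicit and avoids importing the $\Z[\pi]$-duality machinery.
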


\begin{proof}
The assumption (ii) on $\pi_1(Q)$ being abelian implies that the
inclusion $M\rightarrow W$, for any symplectically aspherical
filling $(W,\omega)$ of $(M,\xi)$, is $\pi_1$-isomorphic by
Theorem~\ref{thm:homology}~(b), and so all maps in the cobordism diagram
are $\pi_1$-isomorphic
by Lemma~\ref{lem:filling-pi} and its proof. This allows us to pass to
universal covers in that diagram.

\begin{rem}
If the assumption on $\pi_1(Q)$ being abelian is dropped, the
conclusion of the theorem still holds true for all fillings $(W,\omega)$
for which the inclusion $M\rightarrow W$ is $\pi_1$-injective.
\end{rem}

Since $\widetilde{Q}$ is contractible ($\widetilde{Q}\simeq*$) by
assumption~(i), we also have $\wtW_0\simeq*$.
Proposition~\ref{prop:contractible} then tells us that $\wtW\simeq*$.
From the homology sequence of the pair $(\wtW_1,\wtW_0)$,
where $W_1$ is the diffeomorphic copy of $W$ in the notation of the
preceding sections, we see that $H_k(\wtW_1,\wtW_0)=0$ in
all degrees, and $H_k(\wtX,\wtM_0)=0$ by excision.

Thus, as in the proof of Theorem~\ref{thm:W-simeq-W0} we find that
$M_0$ is a strong deformation retract of~$X$. In particular, we
have $H_k(X,M_0)=0$ for all~$k$, and hence $H_k(X,M_1)=0$ for
all~$k$ by Poincar\'e duality and the universal coefficient theorem.

In order to show that the `upper' inclusion $M_1\rightarrow X$
is likewise a homotopy equivalence, we need to establish
$\pi_k(X,M_1)=0$ for all~$k$; we already know this for $k=0,1$.
To this end, analogous to the proof of Theorem~\ref{thm:W-simeq-W0}, we have
to show the vanishing of the relative homology groups $H_k(\wtX,\wtM_1)$.

From $H_k(\wtX,\wtM_0)=0$ for all $k$ we know that
$H_k(\wtX)$ is isomorphic to $H_k(\wtM)$ for all~$k$.
Hence, if $H_k(\wtM)=0$, then $H_k(\wtX)=0$, and the inclusion
$\wtM_1\rightarrow\wtX$ obviously induces an isomorphism on~$H_k$;
the same is true on~$H_0$.
In our situation, the only non-zero homology group of $\wtM$
in higher degree is $H_{q+2m-1}(\wtM)\cong\Z$.

From the Gysin homology sequence of the sphere bundle $M\rightarrow Q$
we see that $H_{q+2m-1}(M)\cong\Z$, generated by the fibre
class. The same is true for the universal cover $\wtM$, which implies
that the homomorphism
\[ \Z\cong H_{q+2m-1}(\wtM)\longrightarrow H_{q+2m-1}(M)\cong\Z\]
is an isomorphism.
The relevant part of the homology exact sequences of the pairs
$(\wtX,\wtM_0)$ and $(X,M_0)$ becomes
\begin{diagram}
H_{q+2m-1}(\wtM_0) & \rTo^{\cong} & H_{q+2m-1}(\wtX)\\
\dTo^{\cong}       &              &\dTo             \\
H_{q+2m-1}(M_0)    & \rTo^{\cong} & H_{q+2m-1}(X),
\end{diagram}
so the homomorphism
\[ \Z\cong H_{q+2m-1}(\wtX)\longrightarrow H_{q+2m-1}(X)\cong\Z\]
is likewise an isomorphism.

Finally, from the homology exact sequences of the pairs
$(\wtX,\wtM_1)$ and $(X,M_1)$ we have
\begin{diagram}
H_{q+2m-1}(\wtM_1) & \rTo         & H_{q+2m-1}(\wtX)\\
\dTo^{\cong}       &              &\dTo_{\cong}      \\
H_{q+2m-1}(M_1)    & \rTo^{\cong} & H_{q+2m-1}(X),
\end{diagram}
which gives us an isomorphism
\[ \Z\cong H_{q+2m-1}(\wtM_1)\longrightarrow H_{q+2m-1}(\wtX)\cong\Z.\]
Thus, the inclusion $\wtM_1\rightarrow\wtX$ is a homology isomorphism.

It follows that $\{ M_0,X,M_1\}$ is an $h$-cobordism. Under the
assumption that $\Wh(\pi_1(Q))=0$ it is an $s$-cobordism, and hence trivial
under the dimension assumption~(iii).
\end{proof}

\begin{ex}
\label{ex:Q1}
(1) Any closed Riemannian manifold $Q$ with abelian fundamental group
and non-positive sectional curvature satisfies the assumptions
of the theorem, since $Q$ is aspherical by the Hadamard--Cartan
theorem, and $\Wh(\pi_1(Q))$ is trivial by the work of Farrell and
Jones~\cite{fajo90}.

(2) The conclusions of the theorem hold whenever $Q$
is a product of unitary groups and spheres. Indeed,
in this case $\pi_1(M)\cong\pi_1(Q)$ is a free abelian group,
which has trivial Whitehead group. The manifold $Q$ has
a trivial stable tangent bundle, and $Q$ is a simple
space, hence so is $M\cong Q\times S^{q+2m-1}$. Then appeal
to Theorem~\ref{thm:simple}.
\end{ex}

When information is given on the handle structure of the
filling, as in the next theorem, we can use the
results from Section~\ref{section:handle} to remove the
condition on $\pi_1(Q)$ being abelian.

\begin{thm}
\label{thm:cotangent2}
Let $M=S(T^*Q\oplus\C^m)$, $m\geq 1$, with its standard contact
structure~$\xi$, where $Q$ is a closed $q$-dimensional manifold
subject to the following conditions:
\begin{itemize}
\item[(i)] $Q$ is aspherical.
\item[(ii)] $\Wh(\pi_1(Q))$ is trivial.
\item[(iii)] $n=q+m\geq 3$, that is, $\dim M\geq 5$.
\end{itemize}
Then the following holds:
\begin{itemize}
\item[(a)] All subcritical Stein fillings of $M$ are diffeomorphic to
$W_0:=D(T^*Q\oplus\C^m)$.
\item[(b)] If $m\geq 2$, then all Stein fillings of $M$ are
diffeomorphic to~$W_0$.
\end{itemize}
\end{thm}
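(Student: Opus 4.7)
The strategy is to mirror the proof of Theorem~\ref{thm:cotangent1}, obtaining the crucial $\pi_1$-injectivity of the inclusion $M\hookrightarrow W$ from the handle-theoretic general position argument of Lemma~\ref{lem:rel-pi-H} rather than from Theorem~\ref{thm:homology}(b); this is what lets us dispense with the abelianness hypothesis on $\pi_1(Q)$, at the price of imposing a handle bound on~$W$. Observe that $W_0:=D(T^*Q\oplus\C^m)$ is itself a subcritical Stein filling of $(M,\xi)$ with $\ell_0=q$, and by hypothesis~(i) the universal cover $\wtW_0$ is contractible.

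First I would establish that $M\hookrightarrow W$ is $\pi_1$-isomorphic. In case~(a), $W$ subcritical Stein gives $\ell\leq n-1$, so Lemma~\ref{lem:rel-pi-H} yields $\pi_k(W,M)=0$ for $k\leq n$; in case~(b), $W$ Stein gives $\ell\leq n$, so $\pi_k(W,M)=0$ for $k\leq n-1$. Under hypothesis~(iii), both cases deliver $\pi_1(W,M)=\pi_2(W,M)=0$, so $M\hookrightarrow W$ is $\pi_1$-isomorphic. By the proof of Lemma~\ref{lem:filling-pi}, every inclusion in the cobordism diagram of Section~\ref{section:filling} is then $\pi_1$-isomorphic, permitting passage to universal covers throughout. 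Proposition~\ref{prop:contractible} now yields $\wtW\simeq *$. The long exact sequence of $(\wtW_1,\wtW_0)$, combined with excision, gives $H_k(\wtX,\wtM_0)=0$ for all~$k$; the relative Hurewicz and Whitehead theorems then imply that $M_0$ is a strong deformation retract of~$X$. For the opposite inclusion $M_1\hookrightarrow X$, I would combine Poincar\'e--Lefschetz duality (giving $H_k(X,M_1)=0$) with the Gysin sequence of the sphere bundle $M\to Q$ --- using that $\wtM\simeq S^{q+2m-1}$ has $H_{q+2m-1}(\wtM)\cong\Z$ as its sole non-zero higher homology --- exactly as in the proof of Theorem~\ref{thm:cotangent1}, to deduce that $M_1$ is likewise a strong deformation retract of~$X$. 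Thus $\{M_0,X,M_1\}$ is an $h$-cobordism; hypothesis~(ii) promotes it to an $s$-cobordism, which by~(iii) and the $s$-cobordism theorem is a trivial product, so $W$ is diffeomorphic to~$W_0$.

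The main obstacle is essentially numerical bookkeeping: once the $\pi_1$-input is secured, the proof of Theorem~\ref{thm:cotangent1} transcribes with minimal changes. Part~(a) is straightforward, since the subcritical handle bound for $W$ leaves ample slack in Lemma~\ref{lem:rel-pi-H}. Part~(b) is tighter: for a critical Stein filling ($\ell=n$) the margin in Lemma~\ref{lem:rel-pi-H} is exhausted exactly at $k=2$, so hypothesis~(iii) is sharp. The hypothesis $m\geq 2$ places the homotopical dimension of $W_0$ sufficiently below the handle index of a critical Stein filling --- precisely the inequality $\ell_0+\max(\ell_0,\ell)\leq 2n-2$ of Theorem~\ref{thm:W-simeq-W0}, which here reads $q+n\leq 2n-2$, i.e.\ $m\geq 2$ --- so that the deformation retract arguments run without extra hypotheses. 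No new homological or moduli-space input beyond what the proof of Theorem~\ref{thm:cotangent1} already supplies is required.
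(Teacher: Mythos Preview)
Your proposal is correct and follows essentially the same route as the paper. The paper's proof is terse: it verifies the numerics $\ell_0=q\leq n-1$, $\ell\leq n-1$ (case~(a)) or $\ell_0=q\leq n-2$, $\ell\leq n$ (case~(b)), invokes Lemma~\ref{lem:rel-pi-H} for the $\pi_1$-isomorphism, then cites Theorem~\ref{thm:W-simeq-W0} for the lower inclusion $M_0\hookrightarrow X$ and the proof of Theorem~\ref{thm:cotangent1} for the upper inclusion $M_1\hookrightarrow X$ --- exactly your strategy, with your appeal to Proposition~\ref{prop:contractible} being the specialisation of the Theorem~\ref{thm:W-simeq-W0} argument available here thanks to asphericity of~$Q$.
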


\begin{proof}
In the notation of Lemma~\ref{lem:rel-pi-H} we have
$\ell_0=q\leq n-1$ and $\ell\leq n-1$ in case~(a); $\ell_0=q\leq n-2$
and $\ell\leq n$ in case~(b). In either case,
the inclusion $M\rightarrow W$ (where $W$ is any filling of the
described type) is $\pi_1$-isomorphic by Lemma~\ref{lem:rel-pi-H}.
 
Also, the assumption of Theorem~\ref{thm:W-simeq-W0} is satisfied,
so the argument there shows that the `lower' inclusion
$M_0\rightarrow X$ in the cobordism $\{ M_0,X,M_1\}$
is a homotopy equivalence.

The argument for the `upper' inclusion $M_1\rightarrow X$
is then as in the preceding proof.
\end{proof}

\begin{ex}
\label{ex:Q2}
The conclusions of Theorem~\ref{thm:cotangent2} hold for the following
manifolds~$Q$.

(1) Any closed surface. For orientable surfaces of genus at least $1$
and non-orientable surfaces of genus at least~$2$,
Theorem~\ref{thm:cotangent2} applies directly thanks to the
results cited in Example~\ref{ex:Q1}. For $Q=S^2$ even
Theorem~\ref{thm:cotangent1} holds true; this example
is covered by Theorem~\ref{thm:filling}.

For $Q=\RP^2$, the Whitehead group of the fundamental group
$\pi_1(\RP^2)=\Z_2$ vanishes. We claim that, again,
Theorem~\ref{thm:cotangent1} holds true in this case. Indeed,
the only part of the argument that needs to be adapted is
where we show that $H_k(\wtX,\wtM_1)$ vanishes for all~$k$.
As long as we only pass to finite covers $X',M_1'$, this vanishing result
holds by Poincar\'e duality. Thus, for the argument in the
proof of Theorem~\ref{thm:cotangent1} to go through it suffices
to find a finite cover $M_1'$ of $M_1$ such that the projection map
$\wtM_1\rightarrow M_1'$ induces an isomorphism on $H_k$
whenever $H_k(\wtM_1)$ is non-trivial. In the present example,
we can take $M_1'=\wtM_1=S^2\times S^{2m+1}$. Of course, in this
case of a finite fundamental group we can alternatively appeal
directly to Theorem~\ref{thm:filling2}.

(2) Any closed, irreducible aspherical $3$-manifold.
This follows from the result of Roushon~\cite{rous11} that the
fundamental group of such a manifold has a trivial Whitehead group.

(3) Any closed, irreducible $3$-manifold $Q$ covered
by $S^3$ with $\Wh(\pi_1(Q))=0$. Here the argument is as in (1).
Examples of the allowed fundamental groups are $\Z_2$, $\Z_3$,
$\Z_4$, $\Z_6$.
\end{ex}
\section{Symplectomorphism type}
\label{section:symplecto}
We now assume that $(M,\xi)$ is a closed, connected contact manifold
of dimension $2n-1$, $n\geq 3$, that admits a $2$-subcritical Stein filling
$(W_0,\omega_0)$,
that is, where the Stein handles are all of index at most $n-2$.
In particular, we have $\ell_0\leq n-2$. Under this assumption, we want
to formulate topological conditions on $M$ that allow us to
classify all subcritical fillings up to symplectomorphism.

For the notion of Stein deformation equivalence
see~\cite[p.~311]{ciel12}. Deformation equivalent
Stein fillings are symplectomorphic in the
sense of~\cite[p.~318]{ciel12}. The concept of
flexible Stein fillings is defined in \cite[Definition~11.29]{ciel12}.
Suffice it to say here that all subcritical Stein fillings
are flexible.

\begin{thm}
\label{thm:flexible}
Let $(M,\xi)$ be a $(2n-1)$-dimensional closed, connected contact manifold,
$n\geq 3$, admitting a $2$-subcritical Stein filling $(W_0,\omega_0)$.
Further, we make the following topological assumptions:
\begin{itemize}
\item[(i)] $\Wh(\pi_1(M))=0$.
\item[(ii)] $M$ (or some finite cover of~$M$) is
a simple space, or $M$ (or some finite cover) has the property
that the homomorphism $H_k(\wtM)\rightarrow H_k(M)$ is an isomorphism
whenever $H_k(\wtM)\neq 0$.
\end{itemize}
Then all flexible Stein fillings of $(M,\xi)$ are Stein deformation
equivalent.
\end{thm}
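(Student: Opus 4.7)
The plan is a two-stage reduction. First, I show that every flexible Stein filling $(W,\omega)$ of $(M,\xi)$ is diffeomorphic to~$W_0$; second, I invoke the Cieliebak--Eliashberg $h$-principle for flexible Weinstein structures to upgrade that smooth diffeomorphism to a Stein deformation equivalence. Since a $2$-subcritical Stein filling is \emph{a fortiori} flexible, $(W_0,\omega_0)$ is itself one of the fillings under consideration, and it suffices to prove Stein deformation equivalence with this fixed reference.

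For the diffeomorphism step I would run the cobordism argument of Section~\ref{section:filling}. Because $\ell_0\leq n-2$ and because a flexible Weinstein filling has a handle decomposition with handles of index $\leq n$, the hypothesis $\ell_0+\max(\ell_0,\ell)\leq 2n-2$ of Theorem~\ref{thm:W-simeq-W0} is met (using $n\geq 3$), so Lemma~\ref{lem:rel-pi-H} gives that both inclusions $M\to W_0$ and $M\to W$ are $\pi_1$-isomorphic. All maps in the cobordism diagram are then $\pi_1$-isomorphic, and the diagram lifts to universal covers. As in the proofs of Theorems~\ref{thm:W-simeq-W0} and~\ref{thm:cotangent1}, the homology epimorphism of Section~\ref{section:coverings} combined with Theorem~\ref{thm:homology}(a) gives $H_*(\wtX,\wtM_0)=0$ in all degrees. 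Assumption~(ii) then handles the corresponding vanishing of $H_*(\wtX,\wtM_1)$: in the simple-space case the argument of Section~\ref{section:simple} applies verbatim, while in the finite-cover case I would combine Poincar\'e duality on a suitable finite cover with the homology-isomorphism property, mimicking the $\RP^2$ variant of Example~\ref{ex:Q2}(1). Via the relative Hurewicz theorem this produces an $h$-cobordism $\{M_0,X,M_1\}$, which assumption~(i) upgrades to an $s$-cobordism, trivial by the $s$-cobordism theorem since $\dim M\geq 5$. Hence $W\cong W_0$ as smooth manifolds.

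For the deformation-equivalence step I would appeal to the Cieliebak--Eliashberg $h$-principle for flexible Weinstein structures \cite[Chapter~14]{ciel12}: two flexible Weinstein structures on the same smooth manifold whose underlying formal Weinstein data are homotopic (rel boundary) are Weinstein deformation equivalent. Using the diffeomorphism from Step~1, arranged to restrict to a contactomorphism of boundaries, it remains to compare the two almost complex structures on~$W_0$. Since they already agree on a collar of $\partial W_0=M$ (both inducing the complex tangencies of~$\xi$) and $W_0$ has homotopical dimension $\ell_0\leq n-2$, the relevant relative obstruction classes in $H^k\bigl(W_0,M;\pi_k(\mathrm{SO}(2n)/\mathrm{U}(n))\bigr)$ for $k\leq n-2$ can be dealt with by standard obstruction theory on the fibration $B\mathrm{U}(n)\to B\mathrm{SO}(2n)$. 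Once formal equivalence is established, the $h$-principle delivers an honest Weinstein homotopy.

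The principal obstacle is this second step: executing the $h$-principle requires one first to realise the smooth diffeomorphism $W\cong W_0$ so that it carries the boundary contact structures into each other (possibly after a preliminary Stein deformation to align collar neighbourhoods), and then to verify that the two almost complex structures thereby induced on~$W_0$ are homotopic rel boundary. The strict $2$-subcriticality assumption $\ell_0\leq n-2$ (as opposed to mere subcriticality $\ell_0\leq n-1$) is precisely what makes the obstruction-theoretic comparison go through, by pushing the potential top-dimensional obstruction out of the homotopical range of~$W_0$.
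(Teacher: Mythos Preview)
Your first step --- establishing that $\{M_0,X,M_1\}$ is an $s$-cobordism and hence $W\cong W_0$ as smooth manifolds --- is correct and matches the paper verbatim.

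The second step diverges from the paper, and this is where the proposal is incomplete. The paper does \emph{not} attempt to compare almost complex structures on $W_0$ via obstruction theory. Instead it puts a flexible Stein structure directly on the cobordism~$X$: one takes a plurisubharmonic function on~$\C$ with two index-$0$ points and one index-$1$ point, places the replaced and removed copies of~$W_0$ over the two index-$0$ points, and notes that the product potential on $V\times\C$ now has critical indices shifted up by at most~$1$. Since $V$ is $2$-subcritical (handles up to index~$n-2$), the cobordism~$X$ inherits a Stein structure with only subcritical handles plus the flexible handles coming from~$W$; hence $X$ is a flexible Stein cobordism. One then invokes the Stein $h$-cobordism theorem \cite[Corollary~15.12]{ciel12} directly, which gives the Stein deformation equivalence in one stroke. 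This is precisely where the $2$-subcriticality hypothesis bites: it buys the extra unit of index needed to absorb the $1$-handle from the $\C$-factor.

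Your alternative route via the flexible $h$-principle is plausible in spirit but, as you yourself flag, the obstruction-theoretic step is not settled. Two concrete issues: first, the diffeomorphism coming out of the $s$-cobordism theorem is not automatically a contactomorphism on the boundary, and arranging this requires an additional argument (e.g.\ via the Liouville flow identification $M_1\cong M_0$). Second, your placement of the obstructions in $H^k(W_0,M;\pi_k(SO(2n)/U(n)))$ for $k\leq n-2$ is not right: rel-boundary obstructions to homotoping almost complex structures live in $H^{k+1}(W_0,M;\pi_k(SO(2n)/U(n)))$, and by Poincar\'e--Lefschetz duality these groups are supported in degrees $k+1\in\{n+2,\dots,2n\}$, not the low range. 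Showing these vanish would require nontrivial input about $\pi_k(SO(2n)/U(n))$ in the range $k\geq n+1$, which you have not supplied. The paper's approach sidesteps all of this by never leaving the Stein category.
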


\begin{ex}
The assumptions of the theorem are satisfied by $(S^{2n-1},\xist)$
and by any sphere bundle $S(T^*Q\oplus\C^m)$ with its standard
contact structure, provided $m\geq 2$, $\Wh(\pi_1(Q))=0$,
and $Q$ is a Lie group or satisfies the homological assumption (ii)
on coverings.
\end{ex}

\begin{proof}[Proof of Theorem~\ref{thm:flexible}]
Given a flexible Stein filling $(W,\omega)$ of~$(M,\xi)$,
we consider the cobordism $\{M_0,X,M_1\}$ as in Section~\ref{section:filling}.
As we saw  in the proof of Theorem~\ref{thm:W-simeq-W0}, the
lower inclusion $M_0\rightarrow X$ is a homotopy equivalence.
The topological condition (ii) guarantees that the upper
inclusion is likewise a homotopy equivalence, see
Section~\ref{section:simple} for the case that $M$ is a simple space,
and the argument in Example~\ref{ex:Q2}~(2) for the case
when the homological information on $\wtM$ is given.
Together with condition~(i) this implies that $\{M_0,X,M_1\}$
is an $s$-cobordism, so that $W$ and $W_0$ are diffeomorphic.

We now want to construct a Stein structure on the
cobordism $\{M_0,X,M_1\}$. To this end,
consider a Stein structure on $\C$ with two $0$-handles
and one $1$-handle; the gradient flow of the plurisubharmonic potential
is shown in Figure~\ref{figure:stein-C}.
The assumption on $W_0$ being $2$-subcritical translates into saying
that the Stein manifold $(V,J_V)$ --- in the notation of
Section~\ref{section:proof-homology} --- has a plurisubharmonic
potential $\psi_V$ with finitely many critical points up to index $n-2$ only.
On the product $V\times\C$ we then still have a subcritical potential~$\psi$.
Each critical point of $\psi_V$ gives rise to three critical points
of~$\psi$; the ones sitting over the index~$1$ point in $\C$ have
their index shifted up by~$1$.

\begin{figure}[h]
\labellist
\small\hair 2pt
\endlabellist
\centering
\includegraphics[scale=0.4]{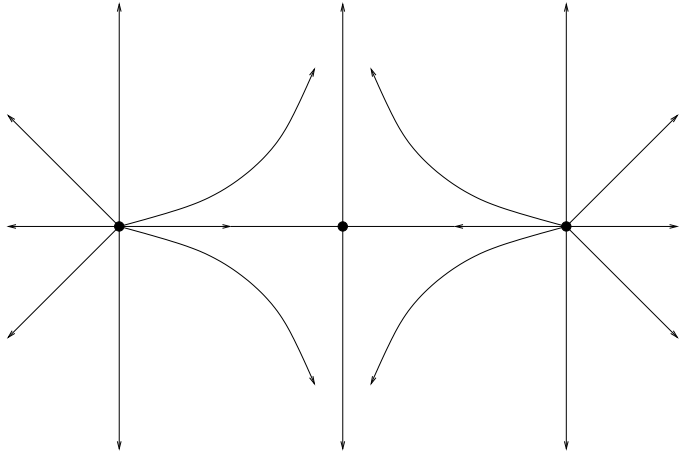}
  \caption{A Stein structure on $\C$.}
  \label{figure:stein-C}
\end{figure}

Now we build the cobordism $X$ as before, where the copy
of $W_0$ that is replaced by $W$ and the copy of $W_0$ that
is removed are each placed, in the $\C$-direction, in a neighbourhood
of one of the critical points of index~$0$. This induces a
Stein structure on the cobordism. This structure is flexible
since, apart from the flexible handles coming from $W$, it only
contains subcritical handles.
So the theorem follows from the Stein $h$-cobordism
theorem~\cite[Corollary~15.12]{ciel12}.
\end{proof}
\begin{ack}
We thank Paul Biran and Dietmar Salamon for useful conversations.
The commutative diagrams have been produced with Paul Taylor's
\TeX\ macros.
\end{ack}

\end{document}